\documentclass{article}
\usepackage{amsmath,amssymb,amsthm,graphicx,amsxtra}
\usepackage[hypertex]{hyperref}

\usepackage[all]{xy}
%%%%%%%Only for Revision%%%%%%%%%%%%%
%\usepackage{color}
%%%%%%%%%%%%%%%%%%%%
\oddsidemargin =0mm
\evensidemargin =0mm
\topmargin =-15mm
\textwidth =160mm
\textheight =240mm

\newtheorem{thm}{Theorem}[section]
\newtheorem{prop}[thm]{Proposition}
\newtheorem{lem}[thm]{Lemma}
\newtheorem{cor}[thm]{Corollary}

\newtheorem{conj}[thm]{Conjecture}
\newtheorem*{mthm}{Main Theorem}

\theoremstyle{definition}
\newtheorem{dfn}[thm]{Definition}
\newtheorem{ass}[thm]{Assumption}
\newtheorem{ex}[thm]{Example}
\newtheorem*{notation}{Notation}

%%%%%%%%%%%%%%%
%%%%%%%%%%%%%%%
%%%%%%%%%%%%%%%
\begin{document}
\title{A note on logarithmic growth of solutions of\\ $p$-adic differential equations without solvability}
\author{Shun Ohkubo
\footnote{
Graduate School of Mathematics, Nagoya University, Furocho, Chikusaku, Nagoya 464-8602, Japan. E-mail address: shun.ohkubo@gmail.com}
}
\maketitle

\begin{abstract}
For a $p$-adic differential equation solvable in an open disc (in a $p$-adic sense), around 1970, Dwork proves that the solutions satisfy a certain growth condition on the boundary. Dwork also conjectures that a similar phenomenon should be observed without assuming the solvability. In this paper, we verify Dwork's conjecture in the rank two case, which is the first non-trivial result on the conjecture. The proof is an application of Kedlaya's decomposition theorem of $p$-adic differential equations defined over annulus.
\end{abstract}

\tableofcontents
%%%%%%%%%%%%%%%
%%%%%%%%%%%%%%%
%%%%%%%%%%%%%%%
\section{Introduction}
Cauchy's theorem on ordinary linear differential equations over $\mathbb{C}$ asserts that a differential equation
\[
\frac{d^mf}{dz^m}+a_{m-1}(z)\frac{d^{m-1}f}{dz^{m-1}}+\dots+a_0(z)f=0
\]
where $a_i(z)\in\mathbb{C}[\![z]\!]$ converging on the open unit disc $D$ has a full set of solutions on $D$ (\cite[Theorem~7.2.1]{pde}). In the $p$-adic setting, a na\"ive analogue of Cauchy's theorem fails due to the absence of ``$p$-adic'' analytic continuation. Even the exponential series $\exp{(t)}$ converges only on $|t|<p^{-1/(p-1)}$.

After his proof of the rationality part of Weil conjecture using $p$-adic differential equations, Dwork starts to systematically study $p$-adic differential equations in the 1960s. Thanks to efforts of Dwork and his successors, we can measure the obstruction that local solutions $f$ of $p$-adic differential equations to global solutions firstly by the radius of convergence of $f$, secondly by growth of $f$ on the boundary of its convergence disc.

This paper mainly concerns the latter invariant, so called logarithmic growth (log-growth for short) of solutions of $p$-adic differential equations. More concretely, we prove the first non-trivial instance of the following conjecture of Dwork.

Let us fix notation. Let $K$ be a complete discrete valuation field of mixed characteristic $(0,p)$. Let $K\{t\}$ be the ring of power series over $K$ converging on the $p$-adic open unit disc $|t|<1$. Let $K[\![t]\!]_0$ be the ring consisting of $f\in K\{t\}$ which is bounded on $|t|<1$. Let $R$ denote either $K[\![t]\!]_0$ or $K\{t\}$. Then $R$ is endowed with the derivation $d=d/dt$. A {\it differential module} over $R$ is a finite free $R$-module $M$ endowed with a differential $D$ compatible with $d$, that is, an additive map $D:M\to M$ satisfying $D(r\cdot m)=dr\cdot m+r\cdot D(m)$ for all $r\in R,m\in M$. We put $M^{\nabla=0}=\ker{D}$.

A power series $\sum_{i\in\mathbb{N}}{a_it^i}$ with $a_i\in K$ {\it has order of log-growth} at most $\delta\in\mathbb{R}_{\ge 0}$ if $\sup_{i\in\mathbb{N}}{\{|a_i|/(i+1)^{\delta}\}}<\infty$. We denote by $K[\![t]\!]_{\delta}$ the set of power series over $K$ having order of log-growth at most $\delta$. For $\delta<0$, we set $K[\![t]\!]_{\delta}=K[\![t]\!]_0$ for simplicity. Recall that $K[\![t]\!]_{\delta}$ is a $K[\![t]\!]_0$-submodule of $K\{t\}$, and is stable under $d$ (\cite[Proposition 1.2 (1), (3), (4)]{CT}). For a differential module $M$ over $K[\![t]\!]_0$, we regard $M\otimes_{K[\![t]\!]_0}K\{t\}$ as a differential module over $K\{t\}$ by extending $D$ by the formula $D(m\otimes f)=D(m)\otimes f+m\otimes df$ for $m\in M,f\in K\{t\}$. Then $M\otimes_{K[\![t]\!]_0}K[\![t]\!]_{\delta}$ is stable under $D$. We denote by $(M\otimes_{K[\![t]\!]_0}K[\![t]\!]_{\delta})^{\nabla=0}$ the kernel of $D$ restricted to $M\otimes_{K[\![t]\!]_0}K[\![t]\!]_{\delta}$.

We would like to generalize the following fundamental theorem on this topic:
\begin{thm}[{\cite[Theorem~1]{Dw}}]\label{thm:Dw}
Let $M$ be a differential module over $K[\![t]\!]_0$ of rank $m$. Assume that
\[
m=\dim_K(M\otimes_{K[\![t]\!]_0}K\{t\})^{\nabla=0}.
\]
Then,
\[
(M\otimes_{K[\![t]\!]_0}K[\![t]\!]_{m-1})^{\nabla=0}=(M\otimes_{K[\![t]\!]_0}K\{t\})^{\nabla=0}.
\]
\end{thm}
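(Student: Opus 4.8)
The plan is to recast the assertion as a statement about a fundamental solution matrix and then to argue by induction on the rank $m$. Fix a basis $e_1,\dots,e_m$ of $M$ over $K[\![t]\!]_0$ and, using the hypothesis, a basis $v_1,\dots,v_m$ of the $m$-dimensional $K$-vector space $(M\otimes_{K[\![t]\!]_0}K\{t\})^{\nabla=0}$; writing $v_j=\sum_i Y_{ij}e_i$ produces a matrix $Y$ over $K\{t\}$ with $Y'=GY$, where $G$ is the bounded connection matrix of $M$ in the basis $e_\bullet$. Since $\det Y$ satisfies a rank-one equation and $\det Y(0)\neq 0$ (a horizontal section is determined by its value at $0$), $\det Y$ is a zero-free element of $K\{t\}$, hence — by the standard fact that a zero-free element of $K\{t\}$ is automatically bounded, i.e. $K\{t\}^{\times}=K[\![t]\!]_0^{\times}$ — a unit of $K[\![t]\!]_0$; thus $Y\in GL_m(K\{t\})$, $M\otimes K\{t\}$ is trivial, and after a $K$-linear change of the $v_j$ we may take $Y(0)=I$. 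The theorem now asserts precisely that every entry of $Y$ lies in $K[\![t]\!]_{m-1}$; for $m=1$ this is immediate, $Y$ being a single zero-free, hence bounded, element of $K\{t\}$. Throughout I use the facts about $K[\![t]\!]_\delta$ recalled in the introduction, together with the elementary remark that integration raises the order of log-growth by at most one, $\int_0^t\colon K[\![t]\!]_\delta\to K[\![t]\!]_{\delta+1}$ for $\delta\ge 0$.

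For the inductive step the heart of the matter is to produce a rank-one subobject $L\subseteq M$, defined over $K[\![t]\!]_0$, that is a direct summand as a $K[\![t]\!]_0$-module and is again trivial over $K\{t\}$. I obtain it by applying the following to the dual $M^{\vee}$, which is again free over $K[\![t]\!]_0$ and trivial over $K\{t\}$. Let $W$ be the $K\{t\}$-span inside $M^{\vee}\otimes K\{t\}$ of $m-1$ of its horizontal sections: this is a $D$-stable direct summand of rank $m-1$, and $N:=M^{\vee}\cap W$ is a $D$-stable $K[\![t]\!]_0$-submodule. The image of $M^{\vee}$ in $(M^{\vee}\otimes K\{t\})/W\cong K\{t\}$ is a finitely generated, torsion-free $K[\![t]\!]_0$-submodule of rank one, hence — as $K[\![t]\!]_0$ is a B\'ezout domain — free of rank one; so $0\to N\to M^{\vee}\to K[\![t]\!]_0\to 0$ splits, $N$ is a free direct summand of rank $m-1$ with $N\otimes K\{t\}=W$, and $N$ and $M^{\vee}/N$ remain trivial over $K\{t\}$ (triviality passes to subobjects and quotients of a trivial module). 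Dualizing, $L:=(M^{\vee}/N)^{\vee}\subseteq M$ is the desired rank-one, $D$-stable, direct-summand subobject, trivial over $K\{t\}$, with quotient $M/L\cong N^{\vee}$ of rank $m-1$ and again trivial over $K\{t\}$.

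Now choose a basis of $M$ adapted to a splitting $M=L\oplus C$ with $C\cong M/L$; in it $G$ is block upper-triangular,
\[
G=\begin{pmatrix} g & c\\ 0 & G'\end{pmatrix},
\]
with $g$ scalar, $c$ of size $1\times(m-1)$, $G'$ of size $(m-1)\times(m-1)$, all bounded, so that $Y$ is block upper-triangular with diagonal blocks $z$, $Y'$ solving $z'=gz$, $(Y')'=G'Y'$ and off-diagonal block $y$ solving $y'=gy+cY'$. Here $z$ is a rank-one solution, zero-free and hence a unit of $K[\![t]\!]_0$; $Y'$ lies in $GL_{m-1}(K\{t\})$ (since $\det Y=z\det Y'$ is a unit), so by the inductive hypothesis applied to $M/L$ its entries lie in $K[\![t]\!]_{m-2}$; and variation of parameters gives $z^{-1}y=z(0)^{-1}y(0)+\int_0^t z^{-1}cY'$, whose integrand has entries in $K[\![t]\!]_{m-2}$, whence $y$ has entries in $K[\![t]\!]_{m-1}$. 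Therefore all entries of $Y$ lie in $K[\![t]\!]_{m-1}$, and the induction closes.

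The step I expect to be the genuine obstacle is the d\'evissage of the second paragraph. Over $K\{t\}$ the module is already trivial, so rank-one $D$-stable subobjects are plentiful, but they are spanned by horizontal sections, which live in $M\otimes K\{t\}$ and need not be bounded — indeed $M\cap(K\{t\}\cdot v)$ can vanish for a horizontal $v$ having an unbounded coordinate — so extracting a rank-one subobject genuinely defined over $K[\![t]\!]_0$, $D$-stable, and still trivial over $K\{t\}$ is exactly where the B\'ezout property of $K[\![t]\!]_0$ and the resulting splitting must enter. Passing through $M^{\vee}$ is what allows the concluding variation-of-parameters step to invert the rank-one solution $z$ — a unit of $K[\![t]\!]_0$, of log-growth $0$ — rather than a block of $Y'$, whose log-growth is controlled but not $0$; this too relies on the boundedness of zero-free elements of $K\{t\}$.
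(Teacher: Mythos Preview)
The paper does not supply its own proof of this theorem: it is quoted verbatim as \cite[Theorem~1]{Dw} and used as a black box throughout. So there is no ``paper's proof'' to compare your proposal against; one can only ask whether your argument stands on its own and how it relates to Dwork's original.

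Your argument is correct. All the ingredients you invoke are valid in the present setting: $K[\![t]\!]_0$ is a PID (hence B\'ezout) by Weierstrass preparation since $K$ is discretely valued; $K\{t\}^{\times}=K[\![t]\!]_0^{\times}$ follows from the multiplicativity and monotonicity in $\rho$ of the Gauss norms $|\cdot|_\rho$; a nonzero solution in $K\{t\}$ of a first-order linear equation $w'=gw$ with $g\in K[\![t]\!]_0$ is zero-free on the open disc (a zero at $t_0$ would force $(t-t_0)^k\mid w$ for all $k$); and the estimate $\int_0^t\colon K[\![t]\!]_\delta\to K[\![t]\!]_{\delta+1}$ is the elementary bound $|i+1|^{-1}\le i+1$. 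Your d\'evissage via $M^{\vee}$ and the B\'ezout property cleanly produces a $D$-stable rank-one direct summand $L\subset M$ over $K[\![t]\!]_0$, trivial over $K\{t\}$, and the variation-of-parameters computation in the adapted basis then closes the induction exactly as you describe.

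In spirit this is the classical proof. Dwork's original argument in \cite{Dw} also proceeds by induction on the rank and hinges on the same two facts --- the Wronskian lies in $K[\![t]\!]_0^{\times}$, and one antiderivative costs one unit of log-growth --- but is phrased in terms of a cyclic vector and Wronskian minors rather than an abstract rank-one subobject. Your use of duality and the B\'ezout property to manufacture $L$ is a tidier packaging of the same reduction: it avoids cyclic-vector bookkeeping and makes transparent why the block one must invert (namely $z$) is bounded, which is exactly the point your final paragraph isolates.
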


The assumption of the theorem, which can be regarded as a solvability condition, is satisfied, for example, when $M$ admits a Frobenius structure, or, $M$ comes from geometry via Gauss-Manin connection. See \cite{CT}, \cite{CT2}, and \cite{Ohk} in recent developments in the setup assuming Frobenius structures.

When the solvability fails, Dwork proposes

\begin{conj}[{\cite[Conjecture~1]{Dw}}]\label{conj:Dw}
Let $M$ be a differential module over $K[\![t]\!]_0$. Let
\[
m'=\dim_K(M\otimes_{K[\![t]\!]_0}K\{t\})^{\nabla=0}.
\]
Then,
\[
(M\otimes_{K[\![t]\!]_0}K[\![t]\!]_{m'-1})^{\nabla=0}=(M\otimes_{K[\![t]\!]_0}K\{t\})^{\nabla=0}.
\]
\end{conj}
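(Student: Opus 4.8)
The conjecture is void outside its genuinely new range, so I first reduce to that range. If $m'=0$ then $(M\otimes_{K[\![t]\!]_0}K[\![t]\!]_{m'-1})^{\nabla=0}\subseteq(M\otimes_{K[\![t]\!]_0}K\{t\})^{\nabla=0}=0$, and if $m'$ equals the rank $m$ of $M$ then this is precisely Theorem~\ref{thm:Dw}. The first genuinely new case is $M$ of rank $2$ with $m'=1$; here $m'-1=0$, and I must show that the one-dimensional space $(M\otimes_{K[\![t]\!]_0}K\{t\})^{\nabla=0}$ is already contained in $M$. Fix a basis of $M$ over $K[\![t]\!]_0$ and a nonzero $v\in(M\otimes_{K[\![t]\!]_0}K\{t\})^{\nabla=0}$. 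Since for $f\in K\{t\}$ the Gauss norm $r\mapsto\sup_{|t|=r}|f|$ is non-decreasing on $(0,1)$, a series in $K\{t\}$ is bounded on $|t|<1$ as soon as it is bounded near the boundary; hence it suffices to prove that $v\in M\otimes_{K[\![t]\!]_0}\mathcal{E}^\dagger$, where $\mathcal{E}^\dagger$ is the bounded Robba ring inside the Robba ring $\mathcal{R}$ over $K$.

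Now pass to $\mathcal{R}$. The section $v$ spans a trivial rank-one sub-differential-module, so $\dim_K(M\otimes_{K[\![t]\!]_0}\mathcal{R})^{\nabla=0}\ge1$; moreover it is not $2$: if it were, then $M\otimes\mathcal{R}$ would be trivial, so the two intrinsic subsidiary radii of convergence of $M$ at the generic point of radius $r$ would be maximal for $r$ near $1$, while they are also maximal for $r$ near $0$ (as $t=0$ is an ordinary point), so by concavity of $\log r\mapsto\log(\text{radius})$ they would be maximal on all of $(0,1)$, forcing (by the $p$-adic transfer theorem) $m'=2$, a contradiction. The same concavity argument shows that, since $m'<2$, the smaller intrinsic subsidiary radius is not maximal near $r=1$, while the larger one is maximal there because $v$ provides a horizontal section of maximal radius at each generic point; thus the two subsidiary radii are distinct near the boundary. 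Kedlaya's decomposition theorem over the annulus therefore applies and yields $M\otimes_{K[\![t]\!]_0}\mathcal{R}=N_1\oplus N_2$ with $N_1,N_2$ of rank one, $N_1$ of intrinsic radius $<1$ near the boundary (so $N_1^{\nabla=0}=0$), and $N_2$ solvable and containing $v$. One checks that $\mathcal{R}v$ is saturated in $M\otimes\mathcal{R}$ (otherwise the logarithmic derivative relating its generators would be a non-unit exact form, which is impossible for a module trivialized over $\mathcal{R}$), so $N_2=\mathcal{R}v$ is the trivial differential module, and the problem is reduced to showing that the inclusion $N_2\hookrightarrow M\otimes\mathcal{R}$ is bounded, i.e. that $v$ has bounded coordinates.

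Equivalently, the direct summand $N_2$ must descend to a rank-one direct summand $N_2'$ of $M\otimes_{K[\![t]\!]_0}\mathcal{E}^\dagger$ with $N_2'\otimes_{\mathcal{E}^\dagger}\mathcal{R}=N_2$. Granting this, $N_2'$ is again trivial (a rank-one module over $\mathcal{E}^\dagger$ that becomes trivial over $\mathcal{R}$ is trivial: the logarithmic derivative of its horizontal generator is a bounded exact form, and the relevant exponential of an antiderivative has constant Gauss norm on the boundary annulus), so $v$ is a constant multiple of a horizontal generator of $N_2'$, whose coordinates in $M$ lie in $\mathcal{E}^\dagger$; then the first paragraph finishes the proof. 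This descent is the crux. I expect to deduce it from the fact that solvability at the boundary is equivalent to being unit-root at the Gauss point of the unit disc, so that $N_2$ is cut out by the unit-root projector of $M\otimes_{K[\![t]\!]_0}\mathcal{E}$, and to argue that this projector preserves $M\otimes_{K[\![t]\!]_0}\mathcal{E}^\dagger$ because $M$ is defined over $K[\![t]\!]_0\subseteq\mathcal{E}^\dagger$ and carries the convergent horizontal section $v$; alternatively one realizes the spectral projector onto $N_2$ as a bounded limit of polynomials in the connection operator. The main obstacle is precisely this: with a Frobenius structure such a descent is routine, but here there is none, so one must genuinely use both the rank-two hypothesis and the presence of the global solution $v$. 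One must also dispose of a few degenerate features, notably $p$-adic Liouville exponents in the rank-one Kummer constituent of $N_2$, which its solvability excludes.
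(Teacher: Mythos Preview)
Your overall architecture matches the paper's: reduce to $m'=1$, show the two intrinsic subsidiary radii at the boundary are distinct (one equal to $1$ because of the horizontal section $v$, the other strictly less than $1$ because $m'\neq 2$), split $M$ accordingly, and conclude that $v$ already lies in the bounded piece. But the step you yourself flag as ``the crux'' --- descending the decomposition $M\otimes\mathcal{R}=N_1\oplus N_2$ from $\mathcal{R}$ down to $\mathcal{E}^\dagger$ --- is not proved, only hoped for, and the mechanisms you suggest (spectral projectors realized as bounded limits of polynomials in $D$, a unit-root projector at the Gauss point) are exactly the kind of thing that is \emph{not} available without a Frobenius structure. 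As written this is a genuine gap.

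The paper closes this gap not by descending from $\mathcal{R}$ but by never going up to $\mathcal{R}$ for the decomposition in the first place. The version of Kedlaya's decomposition theorem it invokes (Theorem~\ref{thm:dec}) is formulated over the \emph{bounded} half-open annulus ring $K\langle\alpha/t,t]\!]_0$: once $f_1(M,0)>f_2(M,0)$, one obtains directly a splitting $M\otimes K\langle\alpha'/t,t]\!]_0=M_1\oplus M_2$ with $M_1,M_2$ of rank one over $K\langle\alpha'/t,t]\!]_0\subset\mathcal{E}^\dagger$. After that, $(M_1\otimes\mathcal{R})^{\nabla=0}=0$ by the radius argument (your Lemma~\ref{lem:A} analogue), so $v$ lands in $M_2\otimes\mathcal{R}$. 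The remaining step --- showing a horizontal section of a rank-one module over $K\langle\alpha'/t,t]\!]_0$ that lies in $\mathcal{R}$ already lies in $\mathcal{E}^\dagger$ --- is elementary (Corollary~\ref{cor:C}): writing $D(e)=ge$ with $g\in K\langle\alpha'/t,t]\!]_0$, the equation $df/dt=-gf$ forces $f$ to have only finitely many zeros on the annulus (since $g$ does), hence bounded Newton polygon width, hence $f\in\mathcal{E}^\dagger$. Then $v\in M\otimes(K\{t\}\cap K\langle\alpha'/t,t]\!]_0)=M\otimes K[\![t]\!]_0=M$.

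So the fix is not to look for an ad hoc descent argument, but to quote the bounded-annulus form of the decomposition theorem. Your saturation discussion and the Liouville-exponent remark then become unnecessary.
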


The case where $m'$ is equal to the rank of $M$ is nothing but Theorem~\ref{thm:Dw}. In the case $m'=0$, we have nothing to prove. In particular, the conjecture is trivial modulo Theorem~\ref{thm:Dw} in the rank one case. Besides these cases, as long as the author knows, no general result on the conjecture is known. Our main result verifies the conjecture in the first non-trivial case:

\begin{mthm}\label{thm:mthm}
Conjecture~\ref{conj:Dw} is true if $\mathrm{rank}\ M=2$.
\end{mthm}

The main ingredient of the proof is Kedlaya's decomposition theorem of $p$-adic differential equations by the intrinsic generic subsidiary radii which is a refinement of the generic radius of convergence (Theorem~\ref{thm:dec}). Precisely speaking, the strong decomposition over $\mathcal{E}$ extends to an annulus $K\langle\alpha/t,t]\!]_0$, which reduces to studying some rank one objects (Corollary~\ref{cor:C}). In the higher rank case, our strategy does not seem to work since we do not know whether an analogue of Theorem~\ref{thm:Dw} over $K\langle\alpha/t,t]\!]_0$ holds.

In Appendix 1, we give an alternative proof of Theorem~\ref{thm:dec}. In Appendix 2, we give an explicit example of a differential module over $K[\![t]\!]_0$ of rank two, and explain Main Theorem and Theorem~\ref{thm:dec} by using this example.
%%%%%%%%%%%%%%%
%%%%%%%%%%%%%%%
%%%%%%%%%%%%%%%
\subsection*{Notation}

In this paper, we adopt the notation in \cite{pde}. Let $p$ be a prime number, $K$ a complete non-archimedean valuation field. Denote by $\mathcal{O}_K$ the integer ring of $K$. Let $|\ |:K\to\mathbb{R}_{\ge 0}$ be the multiplicative norm on $K$ normalized by $|p|=p^{-1}$. We define the valuation $v$ on $K$ by $v(\cdot)=-\log{|\cdot|}$. Put $\omega=p^{-1/(p-1)}=|p^{1/(p-1)}|<1$.

Let $(G,|\cdot|)$ be a normed abelian group, and $T:G\to G$ be an endomorphism of $G$. We define the operator norm and spectral norm of $T$ by
\begin{align*}
|T|_{\mathrm{op},G}&=\sup_{g\in G,g\neq 0}\{|T(g)|/|g|\},\\
|T|_{\mathrm{sp},G}&=\lim_{s\to\infty}|T^s|^{1/s}_{\mathrm{op},G}.
\end{align*}
%%%%%%%%%%%%%%%
%%%%%%%%%%%%%%%
%%%%%%%%%%%%%%%
\section{Local decomposition by subsidiary radii}\label{sec:loc}

We recall the decomposition of differential modules over complete non-archimedean valuation field proved by Kedlaya, based on works of Christol-Dwork and Robba.

\begin{dfn}[{\cite[Defintions~9.4.1, 9.4.3]{pde}}]\label{dfn:Amice}
Let $\rho\in (0,\infty)$. The $\rho$-{\it Gauss norm} $|\ |_{\rho}$ on $K(t)$ is the unique multiplicative norm satisfying $|\sum{a_it^i}|_{\rho}=\sup{|a_i|\rho^i}$ for $a_i\in K$. Let $F_{\rho}$ be the completion of $K(t)$ under $|\ |_{\rho}$.

We also denote by $|\ |_1$ the multiplicative norm on $\mathcal{O}_K[\![t]\!]$ defined by $|\sum{a_it^i}|_1=\sup{|a_i|}$ for $a_i\in\mathcal{O}_K$. We define $\mathcal{E}$ as the completion of the fraction field of $\mathcal{O}_K[\![t]\!]$ under $|\ |_1$.

Both $F_{\rho}$ and $\mathcal{E}$ are equipped with the derivation $d=d/dt$. 
\end{dfn}

\begin{ass}[{\cite[Definition~1.4.1]{KX}}]
Let $(\mathcal{K},|\cdot|)$ be a complete non-archimedean valuation field (may not be discretely valued) of mixed characteristic $(0,p)$. We assume that $(\mathcal{K},\partial)$ is a differential field of rational type, i.e.,
\[
\partial:\mathcal{K}\to\mathcal{K}
\]
is a derivation and there exists $u\in\mathcal{K}$ such that
\begin{enumerate}
\item[(a)] We have $\partial(u)=1$ and $|\partial|_{\mathrm{op},\mathcal{K}}=|u|^{-1}$,
\item[(b)] We have $|\partial|_{\mathrm{sp},\mathcal{K}}\le\omega |\partial|_{\mathrm{op},\mathcal{K}}$.
\end{enumerate}
\end{ass}

\begin{ex}\label{ex:field}
In this paper, we consider only the cases where
\[
(\mathcal{K},\partial)=(F_{\rho},d), (\mathcal{E},d).
\]
Both satisfy the assumptions~(a) and (b) with $u=t$ (\cite[Defnition~9.4.1]{pde}).
\end{ex}

\begin{dfn}[{\cite[Definitions~1.2.2, 1.2.6, 1.2.8]{KX}}]
A {\it differential module} over $\mathcal{K}$ is a finite dimensional $\mathcal{K}$-vector space $V$ equipped with an action of $\partial$. Define the {\it intrinsic generic radius of convergence} of $V$ as
\[
IR(V)=|\partial|_{\mathrm{sp},\mathcal{K}}/|\partial|_{\mathrm{sp},V}\in (0,1]
\]
for any fixed $\mathcal{K}$-compatible norm $|\cdot|$ on $V$. The intrinsic generic radius of convergence does not depend on the choice. Let $V_1,\dots,V_m$ be the Jordan-H\"older constituents of $V$ in the category of differential modules over $\mathcal{K}$. We define the {\it intrinsic generic subsidiary radii of convergence} $\mathfrak{IR}(V)$ as the multiset consisting of $IR(V_i)$ with multiplicity $\dim_{\mathcal{K}}V_i$ for $i=1,\dots,m$. Let $IR(V;1)\le\dots\le IR(V;\dim_{\mathcal{K}}V)$ denotes the elements in $\mathfrak{IR}(V)$ in increasing order.

Any differential module isomorphic to a direct sum of copies of $(\mathcal{K},d)$ is said to be {\it trivial}.

We say that $V$ has {\it pure radii} if $\mathfrak{IR}(V)$ consists of $\dim_{\mathcal{K}}V$ copies of $IR(V)$.
\end{dfn}

\begin{ex}\label{ex:rank one}
\begin{enumerate}
\item[(i)] (\cite[Example~9.5.2]{pde}) Let $\mathcal{K}$ be as in Example~\ref{ex:field}. For a fixed $\lambda\in K$, let $V=\mathcal{K}e$ be the rank one differential module over $\mathcal{K}$ defined by $\partial (e)=\lambda t^{-1}e$. Then, $IR(V)=1$ if and only if $\lambda\in\mathbb{Z}_p$.
\item[(ii)] We consider the differential modules $V=\mathcal{E}e$ over $\mathcal{E}$ defined by $\partial(e)=-te$. We will prove $IR(V)=\omega$. Since $|\partial|_{\mathrm{sp},\mathcal{E}}=\omega$ (\cite[Definition~9.4.1]{pde}), it suffices to prove $|\partial|_{\mathrm{sp},V}=1$. For $i,j\in\mathbb{N}$, by induction on $i$, we have $\partial^i(t^je)=((-1)^it^{i+j}+\varepsilon_{ij}(t))e$ for some polynomial $\varepsilon_{ij}(t)\in\mathbb{Z}+\mathbb{Z}t+\dots+\mathbb{Z}t^{i+j-1}$. Therefore, $|\partial|_{\mathrm{op},V}\le 1$ hence $|\partial|_{\mathrm{sp},V}\le 1$. Since $|\partial^i(e)|=|(-t)^i+\varepsilon_{i0}(t)|_1|e|=|e|$, we also have $|\partial^i|_{\mathrm{op},V}\ge 1$ hence $|\partial|_{\mathrm{sp},V}\ge 1$, which implies the assertion.
\end{enumerate}
\end{ex}

\begin{lem}[{\cite[Lemma~1.2.9]{KX}}]\label{lem:exact}
For an exact sequence of non-zero differential modules over $\mathcal{K}$
\[
0\to V'\to V\to V''\to 0,
\]
we have
\[
\mathfrak{IR}(V)=\mathfrak{IR}(V')\cup\mathfrak{IR}(V'').
\]
\end{lem}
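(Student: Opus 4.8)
The plan is to deduce the lemma from the Jordan--Hölder theorem in the abelian category of differential modules over $\mathcal{K}$, together with the observation that the multiset $\mathfrak{IR}$ depends only on the isomorphism classes of the Jordan--Hölder constituents and on their $\mathcal{K}$-dimensions. Thus the whole content will be moved onto the purely categorical statement that composition factors are additive in a short exact sequence.

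First I would check that the category $\mathrm{Diff}(\mathcal{K})$ of differential modules over $\mathcal{K}$ is abelian and is a length category. A $\mathcal{K}$-linear map of differential modules commuting with the action of $\partial$ has kernel and cokernel (formed as $\mathcal{K}$-vector spaces) that are automatically $\partial$-stable, so these serve as kernel and cokernel in $\mathrm{Diff}(\mathcal{K})$ and the usual isomorphism theorems hold; moreover $\dim_{\mathcal{K}}$ is finite and strictly drops along a proper inclusion of subobjects, so every object has finite length. Hence $V$, $V'$, $V''$ all admit composition series and the Jordan--Hölder theorem applies.

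Next, given $0\to V'\xrightarrow{\iota} V\xrightarrow{\pi} V''\to 0$, fix composition series $0=V'_0\subset\cdots\subset V'_k=V'$ and $0=V''_0\subset\cdots\subset V''_l=V''$, and splice the first with the preimages $\pi^{-1}(V''_j)$ to obtain the filtration $0=V'_0\subset\cdots\subset V'_k=\iota(V')=\pi^{-1}(V''_0)\subset\pi^{-1}(V''_1)\subset\cdots\subset\pi^{-1}(V''_l)=V$. Its successive quotients are exactly the $V'_i/V'_{i-1}$ together with the $\pi^{-1}(V''_j)/\pi^{-1}(V''_{j-1})\cong V''_j/V''_{j-1}$, all simple, so this is a composition series of $V$; by uniqueness in Jordan--Hölder, the multiset of isomorphism classes of composition factors of $V$ is the disjoint union of those of $V'$ and those of $V''$.

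Finally, by the definition of $\mathfrak{IR}$ each composition factor $W$ contributes $IR(W)$ with multiplicity $\dim_{\mathcal{K}}W$, and $IR(W)$ depends only on the isomorphism class of $W$ since it is intrinsic; as composition-factor multisets add and $\dim_{\mathcal{K}}$ is additive on short exact sequences, we get $\mathfrak{IR}(V)=\mathfrak{IR}(V')\cup\mathfrak{IR}(V'')$. I do not anticipate a genuine obstacle here: the only points needing care are that $\mathrm{Diff}(\mathcal{K})$ is a length category and that $\mathfrak{IR}$ is literally a function of the constituent multiset (so in particular insensitive to the chosen $\mathcal{K}$-compatible norm), both of which are formal.
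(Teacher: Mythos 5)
Your argument is correct and is essentially the intended one: the paper states this lemma without proof, citing \cite[Lemma~1.2.9]{KX}, and since $\mathfrak{IR}$ is defined directly in terms of the Jordan--H\"older constituents (with multiplicities given by $\dim_{\mathcal{K}}$), the identity reduces to splicing composition series exactly as you describe. The two points you flag as needing care --- that differential modules over $\mathcal{K}$ form a length category and that $IR$ is independent of the chosen $\mathcal{K}$-compatible norm --- are both already guaranteed by the surrounding definitions, so there is no gap.
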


\begin{thm}[{\cite[Theorem~1.4.21]{KX}}]\label{thm:local}
Let $V$ be a differential module over $\mathcal{K}$. Then, there exists a decomposition of differential modules over $\mathcal{K}$
\[
V=\displaystyle\bigoplus_{r\in (0,1]}V_r,
\]
where $V_r$ has pure radii $r$.
\end{thm}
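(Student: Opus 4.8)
\textbf{Proof plan for Theorem~\ref{thm:local}.}

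The plan is to build the decomposition from the Jordan--H\"older filtration of $V$ by combining three ingredients: the multiplicativity of $\mathfrak{IR}$ along exact sequences (Lemma~\ref{lem:exact}), the fact that for a differential field of rational type the spectral norm $|\partial|_{\mathrm{sp},V}$ is computed by a visibly ``radius-type'' formula, and a splitting criterion showing that Jordan--H\"older constituents with distinct intrinsic radii do not interact. First I would record that, since $\mathfrak{IR}(V)$ is by definition the multiset of the $IR(V_i)$ taken with multiplicities, Lemma~\ref{lem:exact} shows that the set of radii occurring in $\mathfrak{IR}(V)$ is stable under passing to submodules, quotients, and extensions; in particular it depends only on the isomorphism class of $V$ and is determined by its constituents. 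So the target decomposition $V=\bigoplus_{r\in(0,1]}V_r$ is really a decomposition indexed by the finite set $S=\{\,r : r\in\mathfrak{IR}(V)\,\}$, and it suffices to separate, for each $r_0\in S$, the ``part with radius $=r_0$'' from the ``part with radius $\neq r_0$'': once we can peel off a single isotypic slope we finish by induction on $\dim_{\mathcal{K}}V$.

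Next I would set up the separation step. Write $|\partial|_{\mathrm{sp},\mathcal{K}}=\omega|u|^{-1}$ (this is assumption~(a)+(b) together with the standard computation that the spectral norm of $\partial$ on a field of rational type equals $\omega$ times the operator norm). For a nonzero differential module $W$ over $\mathcal{K}$ put $\sigma(W)=|\partial|_{\mathrm{sp},W}=|\partial|_{\mathrm{sp},\mathcal{K}}/IR(W)$; the largest value of $\sigma$ over the constituents of $V$ is $|\partial|_{\mathrm{sp},V}$ itself, which corresponds to the \emph{smallest} radius $IR(V;1)=\min S$. Let $r_0=\min S$ and set $c=\sigma$ for this extreme slope, so $c=|\partial|_{\mathrm{sp},V}$. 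The plan is to show that the generalized eigenspace-type decomposition of $V$ with respect to $\partial$ relative to the threshold $c$ is a direct sum of differential submodules. Concretely, following the Christol--Dwork/Robba machinery as packaged in \cite[\S1.4]{KX} and \cite[Chapter~6,10]{pde}: one produces a (twisted) polynomial $P(\partial)\in\mathcal{K}\{\partial\}$ annihilating $V$, factors it according to whether the associated Newton-polygon slopes are $<c$ or $=c$ after a suitable completion/base change, and uses the robustness of $\mathfrak{IR}$ under the base changes $\mathcal{K}\hookrightarrow F_\rho$ (resp. the residual $t$-adic field) to descend the factorization back to $\mathcal{K}$; the submodule $V_{r_0}$ is the one killed by the ``pure slope $c$'' factor, the complement by the ``slope $>$ radius $r_0$'' factor. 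Having split off $V_{r_0}$, apply the inductive hypothesis to the complement to obtain $V=\bigoplus_{r\in(0,1]}V_r$ with each $V_r$ pure of radius $r$.

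The main obstacle is precisely the separation step: showing that the decomposition into pure-radii pieces is by \emph{differential} submodules, not merely $\mathcal{K}$-subspaces, and that it really descends to $\mathcal{K}$ rather than only to an auxiliary extension where the Newton-polygon factorization is available. This is where the hypotheses that $\mathcal{K}$ is of rational type and satisfies $|\partial|_{\mathrm{sp},\mathcal{K}}\le\omega|\partial|_{\mathrm{op},\mathcal{K}}$ are essential: the bound by $\omega$ is exactly what makes the exponential/twisted-polynomial constructions converge, and it is what forces the ``off-diagonal'' Hom-module between two constituents of distinct radii to have no nonzero horizontal sections, so that extensions between different slopes split. Handling the boundary case $r=1$ (where $IR$ is maximal and the corresponding piece is the ``close-to-trivial'' part) and the non-discretely-valued base fields $F_\rho$ uniformly requires care, but it is covered by the cited apparatus; I would invoke \cite[Theorem~1.4.21]{KX} for these technical inputs rather than reproving them, and present the argument above as the structural skeleton organizing them.
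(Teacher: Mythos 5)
The paper does not actually prove Theorem~\ref{thm:local}; it is imported as a black box from \cite[Theorem~1.4.21]{KX}, so there is no in-paper argument to compare yours against. Your proposal ultimately makes the same move --- it ends by invoking that very citation ``for these technical inputs'' --- which, presented as a proof, is circular: you cannot appeal to the theorem being proved for its own hard steps. Beyond that, the sketch has one genuine mathematical gap worth naming precisely.

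The gap is in the separation step. The cyclic-vector/twisted-polynomial route you describe --- take an annihilating operator in $\mathcal{K}\{T\}$, factor it by Newton-polygon slopes relative to the threshold $c=|\partial|_{\mathrm{sp},V}$, and read off a direct summand --- only detects radii in the \emph{visible} range $IR<\omega$. Indeed $IR(W)\ge\omega$ is equivalent to $|\partial|_{\mathrm{sp},W}\le|\partial|_{\mathrm{op},\mathcal{K}}$, and in that range the Newton polygon of an annihilating twisted polynomial cannot distinguish constituents of different radii (the relevant slopes are all truncated at the same value). So your threshold argument splits off $V_{r_0}$ only when $r_0=\min\mathfrak{IR}(V)<\omega$, and says nothing about separating, say, a constituent of radius $\omega^{1/2}$ from one of radius $1$. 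The missing idea is Frobenius descent: push $V$ forward along a $p$-power Frobenius (or pass to Frobenius antecedents), which sends a radius $s>\omega$ to $s^p$ and $s\le\omega$ to $s/p$, iterate until the radii to be separated become visible, construct the decomposition there, and transport it back. This is the whole content of the rational-type hypothesis and of \cite[Theorem~10.5.1]{pde}, and it is exactly how the paper's own Appendix~1 proves the global analogue (Lemma~\ref{lem:main}): first the case $f_i(M,0)>-\log\omega$ via Hensel's lemma for twisted polynomials, then induction on $j$ for $f_i(M,0)>-p^{-j}\log\omega$ via $\varphi$, $\Phi$ and the operators $\psi_i$. A smaller point: your remark that constituents of distinct radii ``do not interact'' because the off-diagonal Hom has no horizontal sections only gives $\mathrm{Hom}=0$ (i.e.\ $H^0$ of the internal Hom vanishes since its $IR$ is $<1$); splitting extensions requires $\mathrm{Ext}^1=0$, i.e.\ vanishing of $H^1$, which does not follow from that observation and is again supplied by the visible decomposition plus Frobenius descent.
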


%%%%%%%%%%%%%
%%%%%%%%%%%%%
%%%%%%%%%%%%%

\section{Global decomposition by subsidiary radii}

We recall one of ``globalizations'' of Theorem~\ref{thm:local}.

In the rest of this section, assume that $K$ is discretely valued.

\begin{dfn}[{\cite[\S\S~8.1, 15.1]{pde}}]\label{dfn:ring}
We have the ring of (bounded) analytic functions on the open unit disc $0\le |t|<1$
\begin{align*}
K[\![t]\!]_0&=\left\{\sum_{i\in\mathbb{N}}a_it^i\in K[\![t]\!];a_i\in K,\ \sup_{i\in\mathbb{N}}{|a_i|}<\infty\right\}=\mathcal{O}_K[\![t]\!]\otimes_{\mathcal{O}_K}K,\\
K\{t\}&=\left\{\sum_{i\in\mathbb{N}}a_it^i\in K[\![t]\!];a_i\in K, \lim_{i\to\infty}{|a_i|\eta^i}=0\ (\eta\in (0,1))\right\}.
\end{align*}
We endow $K[\![t]\!]_0$ (resp. $K\{t\}$) with (natural extensions of) Gauss norms $|\cdot|_{\alpha}$ for $\alpha\in [0,1]$ (resp. $\alpha\in [0,1)$).

We have the ring of (bounded) analytic functions on the annulus $\alpha\le |t|<1$ for $\alpha\in (0,1)$
\begin{align*}
K\langle\alpha/t,t]\!]_0&=\left\{\sum_{i\in\mathbb{Z}}a_it^i;a_i\in K, \lim_{i\to-\infty}{|a_i|\alpha^i}=0,\ \sup_{i\in\mathbb{Z}}{|a_i|}<\infty\right\},\\
K\langle\alpha/t,t\}&=\left\{\sum_{i\in\mathbb{Z}}a_it^i;a_i\in K, \lim_{i\to\pm\infty}{|a_i|\eta^i}=0\ (\eta\in (\alpha,1))\right\}.
\end{align*}
We endow $K\langle\alpha/t,t]\!]_0$ (resp. $K\langle\alpha/t,t\}$) with Gauss norms $|\cdot|_{\rho}$ for $\rho\in [\alpha,1]$ (resp. $\rho\in [\alpha,1)$).

We define the (bounded) Robba ring by
\[
\mathcal{E}^{\dagger}=\bigcup_{\alpha\in (0,1)}K\langle\alpha/t,t]\!]_0,\ \mathcal{R}=\bigcup_{\alpha\in (0,1)}K\langle\alpha/t,t\}.
\]
Each of these rings is equipped with the derivation $d=d/dt$.

For simplicity, we define $K\langle 0/t,t]\!]_0$ and $K\langle 0/t,t\}$ as $K[\![t]\!]_0$ and $K\{t\}$.
\end{dfn}

We explain relations between these rings and those defined in \S~\ref{sec:loc}. The ring $K[t][t^{-1}]$ is dense in $K\langle\alpha/t,t]\!]_0$ with respect to $|\ |_{\rho}$ for $\rho\in [\alpha,1)$, $F_{\rho}$ is obtained as the completion of the fraction field of $K\langle\alpha/t,t]\!]_0$ with respect to $|\ |_{\rho}$ for $\rho\in [\alpha,1)$. By a similar reason, for $\rho\in [\alpha,1)$, $F_{\rho}$ is obtained as the completion of the fraction field of $K\langle\alpha/t,t\}$ with respect to $|\ |_{\rho}$.

The ring $K[\![t]\!]_0[t^{-1}]$ is dense in $K\langle\alpha/t,t]\!]_0$ with respect to $|\ |_1$, $\mathcal{E}$ is obtained as the completion of the fraction field of $K\langle\alpha/t,t]\!]_0$ with respect to $|\ |_1$.

\begin{dfn}
Let $\delta\in [0,\infty)$. A power series $f\in K\{t\}$ has {\it order of log-growth} at most $\delta$ if
\[
|f|_{\rho}=O((-\log{(1/\rho)})^{\delta})\text{ as }\rho\uparrow 1.
\]
We denote by $K[\![t]\!]_{\delta}$ the set consisting of $f\in K\{t\}$ having order of log-growth at most $\delta$. Note that $K[\![t]\!]_{\delta}$ is an increasing sequence of $K[\![t]\!]_0$-modules and, for a power series $f=\sum_{i\in\mathbb{N}}{a_it^i}$ with $a_i\in K$, $f\in K[\![t]\!]_{\delta}$ if and only if $\sup_{i\in\mathbb{N}}\{|a_i|/(i+1)^{\delta}\}<\infty$ (\cite[Proposition~2.3.3]{Chr}).
\end{dfn}

\begin{dfn}[{\cite[Notation~2.2.4, Remark~2.2.8]{KX}}]
Let $\alpha\in [0,1)$. A {\it differential module} $M$ over $K\langle\alpha/t,t]\!]_0$ is a finite free $K\langle\alpha/t,t]\!]_0$-module equipped with an action of $d$.

Let $m$ be the rank of $M$. For $r\in (0,-\log{\alpha}]$ and $i\in\{1,\dots,m\}$, define
\[
f_i(M,r)=-\log{IR(M\otimes_{K\langle\alpha/t,t]\!]_0} F_{e^{-r}};i)}+r.
\]
We also define
\[
f_i(M,0)=-\log{IR(M\otimes_{K\langle\alpha/t,t]\!]_0}\mathcal{E};i)}.
\]
\end{dfn}

We recall only a few properties of the $f_i$'s, which will be used in this paper.

\begin{thm}[{\cite[Remarks~11.3.4, 11.6.5]{pde}}]\label{thm:cont}
Let $\alpha\in [0,1)$ and $M$ a differential module over $K\langle\alpha/t,t]\!]_0$ of rank $m$. For $i=1,\dots,m$, the function $f_i(M,r)$ on $[0,-\log{\alpha}]$ is continuous and piecewise affine with finitely many different slopes.
\end{thm}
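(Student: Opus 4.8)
The plan is to replace $M$, via a cyclic vector, by a single twisted-polynomial operator, and then to deduce the continuity and piecewise affinity of the $f_i(M,\cdot)$ from the variation with $r$ of the Newton polygon of that operator. The one substantial input is the Dwork--Robba--Christol--Dwork description of the intrinsic subsidiary radii by means of such a Newton polygon; granting it, what remains is the combinatorics of envelopes of affine functions, where one uses both that $K$ is discretely valued and that $M$ lives over the \emph{bounded} ring $K\langle\alpha/t,t]\!]_0$. Set $R=K\langle\alpha/t,t]\!]_0$ and $F=\mathrm{Frac}(R)$, a differential field of characteristic zero with $d\ne0$. Choose a cyclic vector $v\in M$ for $M\otimes_R F$; then $\{v,Dv,\dots,D^{m-1}v\}$ is an $F$-basis of $M\otimes_R F$, so the matrix expressing it in terms of an $R$-basis of $M$ has a nonzero determinant $\delta\in R$, which therefore has nonzero Gauss norm in every $F_\rho$ ($\rho\in[\alpha,1)$) and in $\mathcal{E}$. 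Hence $\{v,Dv,\dots,D^{m-1}v\}$ is a basis of $M\otimes_R F_\rho$ and of $M\otimes_R\mathcal{E}$ as well, so with $P=\partial^{m}-c_{m-1}\partial^{m-1}-\dots-c_{0}$ ($c_j\in F$) the twisted polynomial associated to $v$,
\[
M\otimes_R F_\rho\cong F_\rho[\partial]/(P)\quad(\rho\in[\alpha,1)),\qquad M\otimes_R\mathcal{E}\cong\mathcal{E}[\partial]/(P),
\]
with no exceptional radii. If $c_0=0$, i.e.\ $M$ has a trivial subquotient, split it off by Lemma~\ref{lem:exact} and induct on $m$, the trivial module contributing the affine function $r\mapsto r$ to the collection of $f_i$'s.

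By the theory of decomposition by the Newton polygon (\cite[Chapters~6 and~10]{pde}), for $\rho=e^{-r}$ the ordered list $f_1(M,r)\le\dots\le f_m(M,r)$ is the sequence of successive slopes of a Newton polygon attached to $P$ whose vertices lie among the points $(i,h_i(r))$, $0\le i\le m$, with
\[
h_i(r)=-\log|c_i|_{e^{-r}}+\ell_i(r)\qquad(c_m:=1),
\]
where $\ell_i$ is affine in $r$ with coefficients depending only on $m$, $i$ and $\omega$ --- once $P$ has been replaced, finitely many times and on finitely many subintervals, by a Frobenius antecedent so as to lie in the ``visible'' range. A Frobenius antecedent substitutes $r\mapsto pr$ and shifts the outcome by a bounded affine function; applied finitely often it preserves the class of continuous functions that are piecewise affine with finitely many slopes, so it suffices to prove the conclusion after such a reduction.

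Now for any nonzero $g=\sum_{k\in\mathbb{Z}}a_kt^k\in R$ the function $r\mapsto\log|g|_{e^{-r}}=\sup_k(\log|a_k|-kr)$ is a convex upper envelope of affine functions of integer slope, and since $K$ is discretely valued, $g$ has only finitely many zeros in $\alpha\le|t|<1$ (Weierstrass preparation); hence $r\mapsto\log|g|_{e^{-r}}$ has only finitely many slopes on $[0,-\log\alpha]$ and is in particular continuous there. Writing each $c_i\in F$ as a ratio of two nonzero elements of $R$ shows that every $h_i$ is continuous and piecewise affine with finitely many slopes on $[0,-\log\alpha]$ (convexity aside). Finally, the lower convex hull of finitely many points whose ordinates vary in this way has a well-defined ordered slope-sequence at each $r$, and its combinatorial type changes only at the finitely many $r$ at which some $h_i-h_j$, or one of them compared with the line through two other of these points, changes sign or slope; on each of the resulting finitely many subintervals the slope-sequence is affine. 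Therefore each $f_i(M,\cdot)$ is continuous and piecewise affine with finitely many slopes on $[0,-\log\alpha]$, the endpoint value $f_i(M,0)$ being handled identically via $M\otimes_R\mathcal{E}\cong\mathcal{E}[\partial]/(P)$ and the fact that $|g|_{e^{-r}}\to|g|_1$ as $r\to0$ for $g\in R$.

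The genuine obstacle is the middle step: that the (suitably normalized, Frobenius-descended) Newton polygon of the cyclic operator computes the \emph{entire} multiset $\mathfrak{IR}(M\otimes F_\rho)$, not merely its smallest member, and that the descent can be organized compatibly with variation of $r$. This is precisely the content of the Dwork--Robba decomposition together with Christol--Dwork's treatment of the ``invisible'' range; once it is in hand, the theorem reduces to the elementary statement that the slopes of a Newton polygon built from finitely many ``moving points'' with piecewise affine, finitely-sloped ordinates are again piecewise affine with finitely many slopes --- the underlying finiteness resting on the discreteness of $K$ (via Weierstrass preparation) and on $M$ being defined over the bounded annulus ring $K\langle\alpha/t,t]\!]_0$.
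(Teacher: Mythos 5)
The paper does not actually prove Theorem~\ref{thm:cont}: it is imported verbatim from \cite[Remarks~11.3.4, 11.6.5]{pde}, so the only meaningful comparison is with the argument in that reference. For $f_1$ and for the \emph{visible} part of the spectrum your outline is the standard one and is essentially correct: cyclic vector over $\mathrm{Frac}(K\langle\alpha/t,t]\!]_0)$ (valid over every $F_\rho$ and over $\mathcal{E}$ since these contain the fraction field), the Christol--Dwork identification of the visible subsidiary radii with the slopes of the Newton polygon of the twisted polynomial (the same Theorem~6.5.3/Corollary~6.5.4 of \cite{pde} used in Appendix~1 here), and the fact that $r\mapsto -\log|g|_{e^{-r}}$ for a nonzero $g$ in the bounded annulus ring is piecewise affine with finitely many slopes because $K$ is discretely valued. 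The convex-hull combinatorics at the end is also fine.

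The gap is in your treatment of the invisible range (intrinsic radii $\ge\omega$). You propose to replace $P$ ``finitely many times and on finitely many subintervals'' by a Frobenius \emph{antecedent}. But an antecedent exists only for a module all of whose intrinsic radii exceed $\omega$; for a module of mixed radii one must first split off that part, and doing so uniformly over a sub-annulus (rather than fibrewise over a single $F_\rho$, which is Theorem~\ref{thm:local} and loses all control of the variation in $r$) is exactly the content of the decomposition Theorem~\ref{thm:dec} --- which both in this paper and in \cite{pde} is \emph{deduced from} Theorem~\ref{thm:cont}. As stated, your argument is therefore circular, or at best assumes the decomposition it would need to construct. A second, independent problem is that finitely many Frobenius operations only render visible the radii below $\omega^{1/p^{j}}$ for a fixed $j$, so radii approaching $1$ (equivalently $f_i(M,r)$ near the line $f=r$) are never reached this way. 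The cited source avoids both issues: the invisible range is handled with Frobenius \emph{descendants} $\varphi_*M$, which always exist over the annulus ring and satisfy $\mathfrak{IR}(\varphi_*M)=\phi(\mathfrak{IR}(M))$ with $\phi$ injective on multisets (compare Lemma~\ref{lem:IR} and Example~\ref{ex:Frob} in Appendix~1), and the subsidiary radii for $i\ge 2$ are obtained not by reading all slopes of one Newton polygon but from the partial sums $f_1(M,r)+\dots+f_i(M,r)$, which are controlled by $f_1$ of exterior powers; each $f_i$ is then a difference of two continuous, piecewise affine, finitely-sloped functions. Your sketch would need to be restructured along these lines before the ``genuine obstacle'' you flag can be considered discharged.
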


\begin{thm}[{\cite[Theorem~2.3.9, Remark~2.3.11]{KX}, cf. \cite[Theorem~12.5.2, Remark~12.5.3]{pde}}]\label{thm:dec}
Let $\alpha\in [0,1)$ and $M$ a differential module over $K\langle\alpha/t,t]\!]_0$ of rank $m$. Suppose that for some $i\in\{1,\dots,m-1\}$, we have $f_i(M,0)>f_{i+1}(M,0)$. Then, $M\otimes_{K\langle\alpha/t,t]\!]_0}K\langle\alpha'/t,t]\!]_0$ for some $\alpha'\in [\alpha,1)$ admits a direct sum decomposition separating the first $i$ intrinsic generic subsidiary radii of $M\otimes_{K\langle\alpha/t,t]\!]_0} F_{\rho}$ for $\rho\in [\alpha',1)$ and $M\otimes_{K\langle\alpha/t,t]\!]_0}\mathcal{E}$.
\end{thm}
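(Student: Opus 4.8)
The hypothesis $f_i(M,0)>f_{i+1}(M,0)$ is a ``break at level $i$'' at the outer boundary, and the plan is to spread this break over a half-open sub-annulus, split $M$ over every Gauss-norm completion in that range by Theorem~\ref{thm:local}, and descend the resulting splittings to a single decomposition over $K\langle\alpha'/t,t]\!]_0$. To begin, by Theorem~\ref{thm:cont} the function $r\mapsto f_i(M,r)-f_{i+1}(M,r)$ is continuous on $[0,-\log\alpha]$ and strictly positive at $r=0$, so there is $\alpha'\in[\alpha,1)$ with $f_i(M,r)>f_{i+1}(M,r)$ for all $r\in[0,-\log\alpha']$. Replacing $(\alpha,M)$ by $(\alpha',M\otimes_{K\langle\alpha/t,t]\!]_0}K\langle\alpha'/t,t]\!]_0)$, I may assume the break holds throughout $[0,-\log\alpha]$; note that for $\rho=e^{-r}$ the inequality $f_i(M,r)>f_{i+1}(M,r)$ is equivalent to $IR(M\otimes F_{\rho};i)<IR(M\otimes F_{\rho};i+1)$, and likewise over $\mathcal{E}$ at $r=0$.

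For the fibrewise splitting, for each $\rho\in[\alpha,1)$ I apply Theorem~\ref{thm:local} to $M\otimes F_{\rho}$ and collect the pure-radii summands of intrinsic radius $\le IR(M\otimes F_{\rho};i)$ into one block and the rest into another; by Lemma~\ref{lem:exact} the first block has dimension exactly $i$ and carries precisely the first $i$ subsidiary radii. The same over $\mathcal{E}$ gives a splitting $M\otimes\mathcal{E}=N_1\oplus N_2$ with $\dim N_1=i$. Lemma~\ref{lem:exact} also shows that any two such ``radius $\le s$ versus rest'' splittings coincide, so each is canonical; I record it by the $D$-horizontal rank-$i$ idempotent it determines in the differential module $\mathrm{End}(M)$ of $K\langle\alpha/t,t]\!]_0$-linear endomorphisms of $M$ (with its natural differential), base-changed to $F_{\rho}$ or $\mathcal{E}$.

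Then comes the descent, which is the analytic heart of the matter. Running the previous paragraph over the closed sub-annuli $\alpha\le|t|\le\rho'$, on each of which the break still holds, the decompositions are compatible under restriction by the canonicity just noted, so they glue to a $D$-horizontal idempotent $\pi^{\mathrm{an}}$ of constant rank $i$ in $\mathrm{End}(M)\otimes K\langle\alpha/t,t\}$. Using the break at $r=0$ together with the overconvergence properties of the spectral decomposition (which also provide the $\mathcal{E}$-idempotent with finite $|\cdot|_1$-norm), one shows that the Gauss norms $|\pi^{\mathrm{an}}|_{\rho}$ stay bounded as $\rho\uparrow 1$, so $\pi^{\mathrm{an}}$ has bounded coefficients, i.e.\ $\pi:=\pi^{\mathrm{an}}$ lies in $\mathrm{End}(M)\otimes K\langle\alpha/t,t]\!]_0$. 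Then $M=\pi M\oplus(1-\pi)M$ is a direct sum decomposition of differential modules over $K\langle\alpha/t,t]\!]_0$, and base change along $K\langle\alpha/t,t]\!]_0\to F_{\rho}$ and $K\langle\alpha/t,t]\!]_0\to\mathcal{E}$ recovers the fibrewise splittings, so $\pi M$ carries exactly the first $i$ subsidiary radii of $M\otimes F_{\rho}$ for every $\rho\in[\alpha,1)$ and of $M\otimes\mathcal{E}$, as required.

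I expect this last step --- the passage from ``decomposes over each $F_{\rho}$ and over $\mathcal{E}$'' to ``decomposes over the bounded ring $K\langle\alpha/t,t]\!]_0$'' --- to be the main obstacle. Gluing the fibrewise idempotents needs the canonicity of the pure-radii decomposition and a uniform control of $|\pi^{\mathrm{an}}|_{\rho}$, and the boundedness of the glued idempotent is exactly where the hypothesis $f_i(M,0)>f_{i+1}(M,0)$ is indispensable: already for rank-one modules a horizontal section over $K\langle\alpha/t,t\}$ can fail to be bounded, and without a break at the outer boundary the analogous conclusion is false. An alternative route to the boundedness is via a cyclic vector and the twisted polynomial $P$ attached to $M$: the break yields a Newton-polygon vertex of $P$ at abscissa $i$ over $F_{\rho}$ for every $\rho\in[\alpha,1]$, and a Hensel-type factorization $P=P_1P_2$ with $\deg P_2=i$ valid over $K\langle\alpha'/t,t]\!]_0$ for some $\alpha'$ then produces the decomposition directly --- the factorization over the bounded ring being, once more, the crux.
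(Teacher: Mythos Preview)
Your proposal correctly isolates the crux --- passing from fibrewise decompositions to one over the bounded ring --- but the key step ``one shows that the Gauss norms $|\pi^{\mathrm{an}}|_{\rho}$ stay bounded as $\rho\uparrow 1$'' is asserted rather than argued. Having a compatible family of idempotents over each $F_{\rho}$ together with one over $\mathcal{E}$ does not by itself bound $|\pi^{\mathrm{an}}|_{\rho}$: there is no ring homomorphism $K\langle\alpha/t,t\}\to\mathcal{E}$ through which to compare $\pi^{\mathrm{an}}$ with the $\mathcal{E}$-idempotent, and elements of $K\langle\alpha/t,t\}$ can have $|\cdot|_{\rho}\to\infty$ as $\rho\uparrow 1$. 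You acknowledge this is the obstacle, but nothing in the sketch overcomes it.

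Your alternative route via a cyclic vector and a Hensel-type factorization of the twisted polynomial is much closer to the paper's actual argument, but it too is incomplete as stated. The Newton polygon of the twisted polynomial over $\mathcal{E}$ (in the sense of \cite[Definition~6.4.3]{pde}) records only those subsidiary radii $s_j$ with $s_j<\omega$; hence the break $s_i<s_{i+1}$ produces a visible vertex only when $s_i<\omega$, i.e.\ when $f_i(M,0)>-\log\omega$. In that regime the paper does exactly what you propose: a Fr\'echet version of Robba's Hensel lemma over $K\langle\alpha/t,t]\!]_0$ (Proposition~\ref{prop:Hensel}) factors the twisted polynomial and yields a differential \emph{submodule} of rank $m-i$ with the correct radii (Key Lemma~\ref{lem:main}); applying the same to $M\spcheck$ and taking an annihilator gives the complementary piece, and one checks the two form a direct sum. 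But when $f_i(M,0)\le-\log\omega$ the polygon shows no vertex and the Hensel argument is unavailable. The paper treats this case by Frobenius descent: pushing $M$ forward along $t\mapsto t^p$ transforms the radii by $s\mapsto\min(s^p,s/p)$, so after finitely many iterations the break drops below $\omega$; one applies the Hensel case to the pushforward and then descends the resulting submodule back to $M$ via an explicit construction (Lemma~\ref{lem:des}, Definition~\ref{lem:G}, Lemmas~\ref{lem:IR} and~\ref{lem:push}). This Frobenius step is essential and has no counterpart in your sketch.
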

%%%%%%%%%%%%%%%
%%%%%%%%%%%%%%%
%%%%%%%%%%%%%%%
\section{Proof of Main Theorem}

In this section, we still assume that $K$ is discretely valued.

We first prepare some lemmata.

\begin{lem}[{cf. \cite[Proposition~5.2.5]{Chr}}]\label{lem:A}
Let $\alpha\in [0,1)$ and $M$ a differential module over $K\langle\alpha/t,t]\!]_0$ of rank $m$. If $IR(M\otimes_{K\langle\alpha/t,t]\!]_0}\mathcal{E};m)<1$, then
\[
(M\otimes_{K\langle\alpha/t,t]\!]_0}\mathcal{R})^{\nabla=0}=0.
\]
\end{lem}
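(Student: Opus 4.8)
The statement asserts that if the \emph{largest} intrinsic generic subsidiary radius of $M$ over $\mathcal{E}$ is strictly less than $1$ --- equivalently, \emph{every} Jordan--Hölder constituent of $M\otimes\mathcal{E}$ has intrinsic radius $<1$ --- then $M$ has no nonzero horizontal sections after base change to the full Robba ring $\mathcal{R}$. The plan is to prove the contrapositive: a nonzero element $v\in (M\otimes_{K\langle\alpha/t,t]\!]_0}\mathcal{R})^{\nabla=0}$ produces a rank one differential submodule of $M\otimes\mathcal{R}$ isomorphic to the trivial module, and then to track this triviality back down to a constituent over $\mathcal{E}$, forcing $IR(M\otimes\mathcal{E};m)=1$.

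\textbf{Step 1: a horizontal section spans a trivial sub-line.} Given $0\neq v\in(M\otimes\mathcal{R})^{\nabla=0}$, clearing denominators we may assume $v\in M\otimes_{K\langle\alpha/t,t]\!]_0}K\langle\alpha'/t,t\}$ for some $\alpha'\in[\alpha,1)$, and after possibly shrinking further that $v$ is part of a basis (its leading coefficient vector is a unit at some Gauss norm). Then $N:=(K\langle\alpha'/t,t\})\cdot v\subset M\otimes K\langle\alpha'/t,t\}$ is a rank one differential submodule with $D(v)=0$, so $N$ is trivial; in particular $IR(N\otimes F_\rho)=1$ for all $\rho\in[\alpha',1)$ and, taking the $\rho\to 1$ (i.e.\ $|\ |_1$) completion, $IR(N\otimes\mathcal{E})=1$ as well (Example~\ref{ex:rank one}(i) with $\lambda=0$, or directly since $d(v)=0$).

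\textbf{Step 2: descend the trivial radius to a constituent over $\mathcal{E}$.} Base-change the inclusion $N\hookrightarrow M\otimes K\langle\alpha'/t,t\}$ along the completion $K\langle\alpha'/t,t\}\to\mathcal{E}$ (recall from the excerpt that $\mathcal{E}$ is the completion of the fraction field of $K\langle\alpha/t,t]\!]_0$ under $|\ |_1$, and $K[\![t]\!]_0[t^{-1}]$ is dense there). This gives a nonzero differential submodule $N\otimes\mathcal{E}\hookrightarrow M\otimes\mathcal{E}$ with $IR=1$. Now apply Lemma~\ref{lem:exact} to the exact sequence $0\to N\otimes\mathcal{E}\to M\otimes\mathcal{E}\to (M\otimes\mathcal{E})/(N\otimes\mathcal{E})\to 0$: the multiset $\mathfrak{IR}(M\otimes\mathcal{E})$ contains $\mathfrak{IR}(N\otimes\mathcal{E})=\{1\}$, so the maximal element $IR(M\otimes\mathcal{E};m)$ equals $1$. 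This contradicts the hypothesis, proving the lemma. (One must check that $v$, a horizontal section over $\mathcal{R}$, indeed survives to a nonzero element over $\mathcal{E}$; this is where the normalization in Step 1 --- arranging $v$ to have a unit leading coefficient at $|\ |_1$ and lie in $K\langle\alpha'/t,t]\!]_0\otimes M$ up to rescaling --- is used, together with the injectivity of $K\langle\alpha'/t,t\}\hookrightarrow\mathcal{E}$ on the relevant finite free module.)

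\textbf{Main obstacle.} The delicate point is Step 1--Step 2 bookkeeping: a priori $v\in M\otimes\mathcal{R}$ need not lie in the bounded ring, and $\mathcal{R}$ does not embed in $\mathcal{E}$, so one cannot naively base-change a section over $\mathcal{R}$ to $\mathcal{E}$. The fix is to work with the rank one \emph{submodule} $N$ generated by $v$ rather than with $v$ itself: $N$ is a differential module defined over $K\langle\alpha'/t,t\}$ (after shrinking $\alpha'$ so that $v$ generates a direct summand of $M\otimes F_{\alpha'}$), and $K\langle\alpha'/t,t\}$ does complete to $\mathcal{E}$; triviality of $N$ is witnessed by the relation $D(v)=0$ holding already in $K\langle\alpha'/t,t\}$, hence persists over $\mathcal{E}$. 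Alternatively, one can cite \cite[Proposition~5.2.5]{Chr} essentially verbatim; the argument above is the geometric repackaging of that statement in Kedlaya's language.
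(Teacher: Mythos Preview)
Your argument has a genuine gap in Step~2. You assert that ``$K\langle\alpha'/t,t\}$ does complete to $\mathcal{E}$'', but this is false: there is no ring homomorphism $K\langle\alpha'/t,t\}\to\mathcal{E}$. The Gauss norm $|\cdot|_1$ is not even defined on all of $K\langle\alpha'/t,t\}$, since elements of the unbounded annulus ring may have coefficients with $|a_i|\to\infty$ as $i\to+\infty$ (take for instance $\sum_{i\ge 0}p^{-\lfloor\sqrt{i}\rfloor}t^i$). Only the \emph{bounded} ring $K\langle\alpha'/t,t]\!]_0$ maps to $\mathcal{E}$. Your attempted fix in Step~1 --- normalizing $v$ to lie in $M\otimes K\langle\alpha'/t,t]\!]_0$ --- is exactly the non-trivial point; for rank one modules this is the content of Corollary~\ref{cor:C}, which the paper proves \emph{after} the present lemma using a separate zero-counting argument (Lemma~\ref{lem:B}), and it is not available in higher rank.

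The paper's proof sidesteps this obstacle entirely. Instead of trying to pass to $\mathcal{E}$, it passes to $F_\rho$ for each $\rho\in[\eta,1)$: the ring $K\langle\eta/t,t\}$ \emph{does} embed in $F_\rho$ for $\rho<1$, so the horizontal section $v$ gives a nonzero trivial subobject of $M\otimes F_\rho$, forcing $IR(M\otimes F_\rho;m)=1$. The passage to $\mathcal{E}$ is then achieved not by a ring map but by the continuity of the function $f_m(M,r)$ at $r=0$ (Theorem~\ref{thm:cont}), which yields $IR(M\otimes\mathcal{E};m)=\lim_{\rho\uparrow 1}\rho\cdot IR(M\otimes F_\rho;m)=1$. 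This continuity result is the missing ingredient in your approach.
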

\begin{proof}
We abbreviate $\otimes_{K\langle\alpha/t,t]\!]_0}$ as $\otimes$. Suppose the contrary, i.e., $(M\otimes\mathcal{R})^{\nabla=0}\neq 0$. We choose $\eta\in [\alpha,1)$ such that $(M\otimes K\langle\eta/t,t\})^{\nabla=0}\neq 0$. Since, for $\rho\in [\eta,1)$, we have
\[
(M\otimes K\langle\eta/t,t\})^{\nabla=0}\subset (M\otimes F_{\rho})^{\nabla=0},
\]
$M\otimes F_{\rho}$ has a non-zero trivial subobject. In particular, $IR(M\otimes F_{\rho};m)=1$ for $\rho\in [\eta,1)$ by Lemma~\ref{lem:exact}. By the continuity of $f_m$ (Theorem~\ref{thm:cont}), we have
\[
IR(M\otimes\mathcal{E};m)=\exp{(-f_m(M,0))}=\lim_{r\downarrow 0}\exp{(-f_m(M,r))}=\lim_{r\downarrow 0}{IR(M\otimes F_{e^{-r}};m)}e^{-r}=1,
\]
which is a contradiction.
\end{proof}

\begin{lem}\label{lem:B}
Let $\alpha\in [0,1)$ and $f\in K\langle\alpha/t,t\}$, $f\neq 0$. Then, the following are equivalent.
\begin{enumerate}
\item[(i)] $f\in K\langle\alpha/t,t]\!]_0$.
\item[(ii)] $f$ has a finite number of zeroes on $\alpha\le |t|<1$, i.e.,
\[
\#\{t\in K^{\mathrm{alg}};\alpha\le |t|<1, f(t)=0\}<\infty,
\]
where $K^{\mathrm{alg}}$ is an algebraic closure of $K$.
\end{enumerate}
\end{lem}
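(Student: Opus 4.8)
The statement is a characterization of $K\langle\alpha/t,t]\!]_0$ inside $K\langle\alpha/t,t\}$ by the finiteness of the zero set on the half-open annulus. The natural tool is the theory of Newton polygons / slope factorizations for elements of the Robba-type rings, i.e. the Weierstrass-type decomposition. The plan is to work ring by ring: the key input is that an element $f\in K\langle\alpha/t,t\}$ admits, for each $\rho\in(\alpha,1)$, a Newton polygon governing its zeros in the closed annulus $\alpha\le|t|\le\rho$, and that the valuations of the zeros are exactly the slopes; this is standard $p$-adic function theory (one may cite the relevant part of \cite{pde} on the structure of the rings $K\langle\alpha/t,t\}$ and their factorization properties). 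I would first reduce to the case where $K$ is algebraically closed and spherically complete (or at least pass to $K^{\mathrm{alg}}$ to count zeros), since both conditions (i) and (ii) are insensitive to a suitable extension of scalars: membership in $K\langle\alpha/t,t]\!]_0$ can be checked after completing the algebraic closure, and the zero count is taken over $K^{\mathrm{alg}}$ by definition.

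\textbf{Direction (i) $\Rightarrow$ (ii).} Suppose $f\in K\langle\alpha/t,t]\!]_0$, $f\neq 0$. Write $f=\sum_{i\in\mathbb{Z}}a_it^i$ with $\sup_i|a_i|<\infty$ and $|a_i|\alpha^i\to 0$ as $i\to-\infty$. The first thing to observe is that the number of zeros of $f$ in the closed annulus $\alpha\le|t|\le\rho$ is, by the Newton polygon, the total horizontal length of the part of the polygon with slopes in $[-\log\rho,-\log\alpha]$ (with the usual conventions), which is controlled by how the index of the "dominant" coefficient moves as $\rho$ varies. The boundedness of $(a_i)_{i\ge 0}$ forces the Newton polygon to have a last vertex at a finite index $N$ (beyond which no further slopes occur among the bounded tail) — more precisely, because $|a_i|$ is bounded, for $\rho$ close to $1$ the vertex of the Newton polygon at abscissa $i$ for large $i$ cannot contribute slopes approaching $0$ in a way that accumulates infinitely many lattice points; hence only finitely many zeros have absolute value $<1$. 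Combined with the convergence condition at $t\to\alpha$ (only finitely many negative-index terms dominate near $|t|=\alpha$, after which the polygon on the $\alpha$-side stabilizes), one concludes the total zero count in $\alpha\le|t|<1$ is finite. I expect this direction to require some care in bookkeeping but to be essentially a Newton-polygon estimate.

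\textbf{Direction (ii) $\Rightarrow$ (i).} Conversely, suppose $f\in K\langle\alpha/t,t\}$ has only finitely many zeros in the half-open annulus. Using the Weierstrass/slope factorization for $K\langle\alpha/t,t\}$, factor out the finitely many zeros as a polynomial (or Laurent polynomial) factor $P(t)$, writing $f=P\cdot g$ with $g\in K\langle\alpha/t,t\}$ a unit, i.e. $g$ has no zeros on $\alpha\le|t|<1$. Since $P$ is (a unit times) a Laurent polynomial and Laurent polynomials lie in $K\langle\alpha/t,t]\!]_0$, and since $K\langle\alpha/t,t]\!]_0$ is a ring, it suffices to show the zero-free $g$ lies in $K\langle\alpha/t,t]\!]_0$. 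For a zero-free element, the Newton polygon over every $\rho\in[\alpha,1)$ is a single point (no slopes in the relevant range), which pins down $|g|_\rho=|g|_{\alpha'}$ constant for $\rho\in[\alpha,1)$ up to the behavior forced at the $\alpha$-end; hence $|g|_\rho$ is bounded as $\rho\uparrow 1$, and translating "$|g|_\rho$ bounded as $\rho\uparrow 1$" into "coefficients bounded" gives $g\in K\langle\alpha/t,t]\!]_0$. Concretely, $\sup_i|b_i| = \sup_{\rho}|g|_\rho$ for $g=\sum b_it^i$, so the bound on Gauss norms is exactly the defining condition of $K\langle\alpha/t,t]\!]_0$.

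\textbf{Main obstacle.} The crux is availability and correct invocation of the slope/Weierstrass factorization in the precise rings $K\langle\alpha/t,t\}$ and $K\langle\alpha/t,t]\!]_0$ — in particular that a zero-free element of $K\langle\alpha/t,t\}$ is a unit with Gauss norm constant on $[\alpha,1)$ after the $\alpha$-end is accounted for, and that factoring out finitely many zeros keeps one inside the ring. Once these structural facts are cited (they are in \cite{pde}, e.g. the discussion of the rings of analytic functions on annuli and their Newton polygons and factorizations), both implications reduce to the elementary observation that membership in $K\langle\alpha/t,t]\!]_0$ is equivalent to boundedness of the Gauss norms $|\cdot|_\rho$ as $\rho\uparrow 1$, which in turn is read off the Newton polygon. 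I would also need to double-check the edge behavior at $|t|=\alpha$ (the "$\lim_{i\to-\infty}|a_i|\alpha^i=0$" condition) is preserved throughout, but this is automatic since we never change the $\alpha$-side: every element in sight already lies in $K\langle\alpha/t,t\}$.
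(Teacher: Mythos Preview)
Your overall strategy is sound and uses the same circle of ideas as the paper (Newton polygons and Weierstrass-type factorization), but you apply them in the reversed directions and with more machinery than necessary. The paper's proof is much shorter: for (i)$\Rightarrow$(ii), since $K$ is discretely valued, every nonzero element of $K\langle\alpha/t,t]\!]_0$ factors as a unit of that ring times a polynomial over $K$, and units have no zeros on the annulus; for (ii)$\Rightarrow$(i), finiteness of zeros means the Newton polygon of $f$ (retaining slopes in $(0,-\log\alpha]$) has finite width, which directly yields an index $i$ with $v(a_j)\ge v(a_i)$ for all $j\ge i$, i.e.\ boundedness of the coefficients. No factoring out of zeros and no extension of scalars is needed for either direction.

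There is one genuine soft spot in your plan. In (i)$\Rightarrow$(ii), the assertion that ``boundedness of $(a_i)$ forces the Newton polygon to have a last vertex'' is false over a non-discretely-valued base: one can have $|a_i|\uparrow 1$ through the value group, producing infinitely many vertices with slopes tending to $0$ and hence infinitely many zeros with $|t|\to 1^-$, while $f$ remains bounded. What makes the argument work is precisely the standing hypothesis that $K$ is discretely valued: then the $v(a_i)$ lie in a lattice, so a lower-bounded sequence of vertex heights can only decrease finitely many times. Your proposal to first reduce to $K$ algebraically closed and spherically complete is therefore misleading, as it discards exactly the hypothesis doing the work (though the specific $f$ you started with still has coefficients of discrete valuation, so no actual error occurs if you track this). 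A minor correction in your (ii)$\Rightarrow$(i): a zero-free $g\in K\langle\alpha/t,t\}$ has $|g|_\rho=c\rho^n$ for some integer $n$, not $|g|_\rho$ constant; the conclusion that $|g|_\rho$ stays bounded as $\rho\to 1^-$ is nonetheless correct.
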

\begin{proof}
(i)$\Rightarrow$(ii) Since $K$ is discretely valued, any element of $K\langle\alpha/t,t]\!]_0$ is written as the product of $K\langle\alpha/t,t]\!]_0^{\times}$ and a polynomial over $K$. Since $u\in K\langle\alpha/t,t]\!]_0^{\times}$ has no zeroes on $\alpha\le |t|<1$, we obtain the assertion.

\noindent (ii)$\Rightarrow$(i) Recall that Newton polygon of $f=\sum{a_it^i}\in K\langle\alpha/t,t\}$ with $a_i\in K$ is the boundary of the lower convex hull of the set $(-i,v(a_i))$ where $i\in\mathbb{Z}$ such that $a_i\neq 0$, retaining only those slopes within $(0,-\log{\alpha}]$ (\cite[Definition~8.2.2]{pde}). The condition~(ii) implies the finiteness of the width of Newton polygon of $f$. Hence, for all $j\ge i$, $v(a_j)\ge v(a_i)$, in particular, $f\in K\langle\alpha/t,t]\!]_0$.
\end{proof}

\begin{cor}\label{cor:C}
Let $\alpha\in [0,1)$ and $M$ a differential module over $K\langle\alpha/t,t]\!]_0$ of rank $1$. Then,
\[
(M\otimes_{K\langle\alpha/t,t]\!]_0}\mathcal{E}^{\dagger})^{\nabla=0}=(M\otimes_{K\langle\alpha/t,t]\!]_0}\mathcal{R})^{\nabla=0}.
\]
\end{cor}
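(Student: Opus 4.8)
The plan is to compare the two solution spaces directly via an element-wise argument, using Lemma~\ref{lem:B} to bridge the gap between the bounded ring $\mathcal{E}^{\dagger}$ and the full Robba ring $\mathcal{R}$. The inclusion $(M\otimes\mathcal{E}^{\dagger})^{\nabla=0}\subseteq(M\otimes\mathcal{R})^{\nabla=0}$ is trivial since $\mathcal{E}^{\dagger}\subseteq\mathcal{R}$ as differential rings, so the content is the reverse inclusion. Since $M$ has rank one, fix a generator $e$ with $D(e)=g\cdot e$ for some $g\in K\langle\alpha/t,t]\!]_0$; a horizontal element of $M\otimes\mathcal{R}$ is of the form $f\cdot e$ with $f\in\mathcal{R}$ satisfying $df+gf=0$, and we want to show that if such a nonzero $f$ exists then in fact $f\in\mathcal{E}^{\dagger}$, i.e.\ $f$ lies in $K\langle\alpha'/t,t]\!]_0$ for some $\alpha'\in[\alpha,1)$.

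First I would reduce to a fixed annulus: a nonzero horizontal $f\in\mathcal{R}$ actually lies in $K\langle\alpha'/t,t\}$ for some $\alpha'\in[\alpha,1)$ (increasing $\alpha'$ if necessary so that $g$ and $f$ are both defined there), so it suffices to show $f\in K\langle\alpha'/t,t]\!]_0$. By Lemma~\ref{lem:B}, this is equivalent to showing that $f$ has only finitely many zeroes on $\alpha'\le|t|<1$. The key observation is that the zeroes of a horizontal section of a rank-one connection are constrained: since $f$ satisfies the first-order equation $df=-gf$ and $g\in K\langle\alpha'/t,t]\!]_0$ has only finitely many zeroes (again by Lemma~\ref{lem:B}, applied to $g$, assuming $g\neq 0$; the case $g=0$ is immediate since then $f$ is a constant), at any zero $t_0$ of $f$ with $g(t_0)\neq 0$ we would get a contradiction with the Leibniz-type vanishing order comparison — more precisely, if $f$ vanishes to order $k\ge 1$ at $t_0$ then $df$ vanishes to order exactly $k-1$, while $-gf$ vanishes to order $\ge k$, forcing $k-1\ge k$, absurd. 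Hence every zero of $f$ is a zero of $g$, and since $g$ has only finitely many zeroes on the annulus, so does $f$.

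The main technical point to handle carefully is the passage to the analytic/geometric picture: statements about ``order of vanishing at a point $t_0\in K^{\mathrm{alg}}$'' of an element of $K\langle\alpha'/t,t\}$ require knowing that such $f$ admits a Weierstrass-type factorization or at least a well-defined vanishing order at each point of the annulus, which is standard for analytic functions on an annulus over a complete nonarchimedean field but should be cited (e.g.\ from the Newton polygon / factorization theory in \cite[\S8]{pde}). One must also be slightly careful that the derivative $df$ of an element of $K\langle\alpha'/t,t\}$ again lies in $K\langle\alpha'/t,t\}$ and that differentiation drops the vanishing order by exactly one at each point where the order is positive — which holds in residue characteristic zero automatically but in mixed characteristic needs the usual $p$-adic caveat; however, since $f(t_0)=0$ with $t_0$ in the open annulus $|t_0|<1$, a local coordinate change shows $f = (t-t_0)^k h$ with $h(t_0)\neq 0$ and $df = (t-t_0)^{k-1}(kh + (t-t_0)h')$, and the bracket is a unit near $t_0$ unless $p\mid k$; in the bad case $p\mid k$ one still has $\mathrm{ord}_{t_0}(df)\ge k$, but then $\mathrm{ord}_{t_0}(gf)\ge k$ as well and iterating or using $f\neq 0$ together with the finiteness of zeroes of $g$ still forces finiteness.

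Let me restructure to avoid that last subtlety entirely: rather than arguing pointwise, I would argue via Newton polygons globally. The equation $df=-gf$ shows that $f$ and $f'$ generate the same ideal-of-zeroes data up to the zeroes of $g$; concretely, on any subannulus, the divisor of zeroes of $f$ (counted on $\alpha'\le|t|<1$) is bounded in terms of the divisor of zeroes of $g$ plus the ``drop'' under differentiation, and the latter is controlled because $f\in K\langle\alpha'/t,t\}$ is an entire-type function on the annulus with Newton polygon of locally finite slopes. The cleanest formulation: if $f$ had infinitely many zeroes on $\alpha'\le|t|<1$, its Newton polygon would have infinite width (as in the proof of Lemma~\ref{lem:B}), meaning the coefficients $a_i$ of $f=\sum a_it^i$ do not satisfy the boundedness $\sup_i|a_i|<\infty$; but from $f'=-gf$ with $g$ bounded one reads off a recursion $(i+1)a_{i+1}=-\sum_j b_{i-j}a_j$ (where $g=\sum b_j t^j$, $\sup|b_j|<\infty$) which, together with $f\in K\langle\alpha'/t,t\}$, propagates boundedness of the tail of $(a_i)$ — the factor $i+1$ only helps since $|i+1|\le 1$ — contradicting infinite Newton-polygon width. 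This coefficient-recursion route is entirely elementary and sidesteps all analytic-geometry and wild-vanishing issues, so I expect that to be the form of the argument; the only real ``obstacle'' is bookkeeping the recursion to confirm that $\sup_{i\ge 0}|a_i|<\infty$ follows, after which Lemma~\ref{lem:B} closes the proof and hence $f\in K\langle\alpha'/t,t]\!]_0\subseteq\mathcal{E}^{\dagger}$.
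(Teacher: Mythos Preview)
Your overall approach is exactly the paper's: reduce to showing that a nonzero horizontal $f\in\mathcal{R}$ lies in some $K\langle\eta/t,t]\!]_0$, translate this via Lemma~\ref{lem:B} into finiteness of zeroes of $f$, and derive that from the relation $df=-gf$ together with Lemma~\ref{lem:B} applied to $g$. The paper dispatches the middle implication (``$g$ has finitely many zeroes $\Rightarrow$ $f$ does'') in a single clause without elaboration, so the places where you add detail are exactly where the paper is silent.

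Two remarks on those details. First, your worry about $p\mid k$ is unfounded: $K$ has characteristic $0$, so the integer $k\ge 1$ is a \emph{nonzero} element of $K$, and $df=(t-t_0)^{k-1}\bigl(kh+(t-t_0)h'\bigr)$ with $kh(t_0)\neq 0$ genuinely vanishes to order exactly $k-1$. You have confused $|k|$ being small with $k$ being zero. The vanishing-order comparison therefore works without caveat and in fact shows $f$ has \emph{no} zeroes on the annulus, since $\mathrm{ord}_{t_0}(df)=k-1<k\le\mathrm{ord}_{t_0}(gf)$ whenever $f(t_0)=0$, regardless of whether $g(t_0)$ vanishes. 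Equivalently: $-g=f'/f$ would acquire a pole at any zero of $f$, contradicting analyticity of $g$ on the annulus.

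Second, your ``restructured'' coefficient-recursion route contains a real error. From $(i+1)a_{i+1}=-\sum_j b_{i-j}a_j$ you must bound $|a_{i+1}|$ from \emph{above}, which means dividing by $|i+1|$; since $|i+1|\le 1$ this makes the bound \emph{worse}, not better --- division by small $|i+1|$ is precisely the mechanism that produces log-growth in the first place. So the recursion does not propagate boundedness of the tail as you claim, and you should discard that alternative in favor of your first (pointwise) argument, which is correct and matches the paper.
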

\begin{proof}
Since we have
\[
(M\otimes_{K\langle\alpha/t,t]\!]_0}\mathcal{E}^{\dagger})^{\nabla=0}\subset (M\otimes_{K\langle\alpha/t,t]\!]_0}\mathcal{R})^{\nabla=0}
\]
and both $K$-vector spaces are at most one dimensional, we have only to prove that if $(M\otimes_{K\langle\alpha/t,t]\!]_0}\mathcal{R})^{\nabla=0}\neq 0$, then $(M\otimes_{K\langle\alpha/t,t]\!]_0}\mathcal{E}^{\dagger})^{\nabla=0}\neq 0$. Let $e$ be a basis of $M$ and $g\in K\langle\alpha/t,t]\!]_0$ such that $D(e)=ge$. We may assume that $g\neq 0$. By assumption, there exists a non-zero $f\in\mathcal{R}$ such that $e\otimes f\in (M\otimes_{K\langle\alpha/t,t]\!]_0}\mathcal{R})^{\nabla=0}$. We have only to prove that $f\in\mathcal{E}^{\dagger}$. Let $\eta\in [\alpha,1)$ such that $f\in K\langle\eta/t,t\}$. Since $g$ has a finite number of zeroes on $\eta\le |t|<1$ by Lemma~\ref{lem:B}, so does $f$ by the relation
\[
df/dt=-gf,
\]
hence, $f\in K\langle\eta/t,t]\!]_0\subset\mathcal{E}^{\dagger}$ by Lemma~\ref{lem:B}.
\end{proof}

\begin{proof}[{Proof of Main Theorem}]
The case $m'=0$ is trivial and the case $m'=2$ is a special case of Dwork's theorem~\ref{thm:Dw}. Hence, we may assume that $m'=1$. If $IR(M\otimes_{K[\![t]\!]_0}\mathcal{E})=1$, then $M\otimes_{K[\![t]\!]_0}K\{t\}$ is trivial by Dwork's transfer theorem (\cite[Theorem~9.6.1]{pde}), which contradicts to $m=1$. Hence, $IR(M\otimes_{K[\![t]\!]_0}\mathcal{E};1)<1$. Since
\[
0\neq (M\otimes_{K[\![t]\!]_0}K\{t\})^{\nabla=0}\subset (M\otimes_{K[\![t]\!]_0}\mathcal{R})^{\nabla=0},
\]
$IR(M\otimes_{K[\![t]\!]_0}\mathcal{E};2)=1$ by Lemma~\ref{lem:A}. Thus,
\[
f_1(M,0)>f_2(M,0).
\]
By Theorem~\ref{thm:dec}, for some $\alpha\in [0,1)$, there exists a direct sum decomposition
\[
M\otimes_{K[\![t]\!]_0}K\langle\alpha/t,t]\!]_0=M_1\oplus M_2
\]
separating the first intrinsic generic (subsidiary) radius of $M\otimes_{K[\![t]\!]_0} F_{\rho}$ for $\rho\in [\alpha,1)$ and $M\otimes_{K[\![t]\!]_0}\mathcal{E}$. That is, we have
\[
\begin{cases}
f_1(M_1,r)=f_1(M,r),\\
f_1(M_2,r)=f_2(M,r)
\end{cases}
\]
for $r\in [0,-\log{\alpha}]$. We fix such an $\alpha$. Since
\[
IR(M_1\otimes_{K[\![t]\!]_0}\mathcal{E};1)=\exp{(-f_1(M_1,0))}=\exp{(-f_1(M,0))}=IR(M\otimes_{K[\![t]\!]_0}\mathcal{E};1)<1,
\]
we have $(M_1\otimes_{K\langle\alpha/t,t]\!]_0}\mathcal{R})^{\nabla=0}=0$ by Lemma~\ref{lem:A}. Hence, we have
\[
(M\otimes_{K[\![t]\!]_0}\mathcal{R})^{\nabla=0}=(M_1\otimes_{K\langle\alpha/t,t]\!]_0}\mathcal{R})^{\nabla=0}\oplus (M_2\otimes_{K\langle\alpha/t,t]\!]_0}\mathcal{R})^{\nabla=0}=(M_2\otimes_{K\langle\alpha/t,t]\!]_0}\mathcal{R})^{\nabla=0}=M_2^{\nabla=0},
\]
where the last equality follows from Corollary~\ref{cor:C}. Therefore,
\begin{align*}
&(M\otimes_{K[\![t]\!]_0}K\{t\})^{\nabla=0}=(M\otimes_{K[\![t]\!]_0}K\{t\})^{\nabla=0}\cap (M\otimes_{K[\![t]\!]_0}\mathcal{R})^{\nabla=0}=\left((M\otimes_{K[\![t]\!]_0}K\{t\})^{\nabla=0}\cap M_2^{\nabla=0}\right)\\
\subset&\left((M\otimes_{K[\![t]\!]_0}K\{t\})^{\nabla=0}\cap (M\otimes_{K[\![t]\!]_0}K\langle\alpha/t,t]\!]_0)^{\nabla=0}\right)=(M\otimes_{K[\![t]\!]_0}(K\{t\}\cap K\langle\alpha/t,t]\!]_0))^{\nabla=0}\\
=&(M\otimes_{K[\![t]\!]_0}K[\![t]\!]_0)^{\nabla=0}=M^{\nabla=0},
\end{align*}
i.e., $(M\otimes_{K[\![t]\!]_0}K\{t\})^{\nabla=0}\subset M^{\nabla=0}$, and the converse inclusion is trivial.
\end{proof}

As a final remark, we note that we can easily deduce the following generic analogue of Conjecture~\ref{conj:Dw} from the generic version of Dwork's theorem.

Let
\[
\tau:\mathcal{E}\to\mathcal{E}[\![X-t]\!]_0;f\mapsto\sum_{n=0}^{\infty}\frac{1}{n!}\frac{d^nf}{dt^n}(X-t)^n
\]
be a ring homomorphism (\cite[Proposition~0.1]{CT}).

\begin{prop}
Let $V$ be a differential module over $\mathcal{E}$. Let
\[
m':=\dim_{\mathcal{E}}(\tau^*V\otimes_{\mathcal{E}[\![X-t]\!]_0}\mathcal{E}\{X-t\})^{\nabla=0}\in \{0,\dots,\dim_{\mathcal{K}}V\}.
\]
Then,
\[
(\tau^*V\otimes_{\mathcal{E}[\![X-t]\!]_0}\mathcal{E}[\![X-t]\!]_{m'-1})^{\nabla=0}=(\tau^*V\otimes_{\mathcal{E}[\![X-t]\!]_0}\mathcal{E}\{X-t\})^{\nabla=0}.
\]
\end{prop}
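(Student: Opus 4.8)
The plan is to reduce the proposition to the "generic version of Dwork's theorem", which I take to be the statement that if $W$ is a differential module over $\mathcal{E}[\![X-t]\!]_0$ of rank $n$ whose solution space over $\mathcal{E}\{X-t\}$ has full dimension $n$, then $(W\otimes_{\mathcal{E}[\![X-t]\!]_0}\mathcal{E}[\![X-t]\!]_{n-1})^{\nabla=0}=(W\otimes_{\mathcal{E}[\![X-t]\!]_0}\mathcal{E}\{X-t\})^{\nabla=0}$. The point is that $\mathcal{E}[\![X-t]\!]_0$ plays the role of $K[\![t]\!]_0$ here: $\tau$ identifies $\mathcal{E}$ with a subfield of constants, and the Taylor/Robba transfer set-up of \cite{CT} endows $\mathcal{E}[\![X-t]\!]_0$ with the derivation $d/dX$, so the formalism of Theorem~\ref{thm:Dw} applies verbatim with $K$ replaced by $\mathcal{E}$ (one checks, or cites, that the relevant properties of log-growth filtrations hold over the base $\mathcal{E}$, which is where \cite[Proposition~1.2]{CT} / \cite[Proposition~2.3.3]{Chr} are used again).

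First I would set $M:=\tau^*V$, a differential module over $\mathcal{E}[\![X-t]\!]_0$ of rank $n=\dim_{\mathcal{E}}V$, and invoke the decomposition of solution spaces: since $\tau^*$ is a functor commuting with the relevant scalar extensions, and since the full Taylor solution module $\tau^*V\otimes\mathcal{E}[\![X-t]\!]$ is free of rank $n$ (this is the content of $\tau$ being a Taylor morphism), the subspace $(\tau^*V\otimes\mathcal{E}\{X-t\})^{\nabla=0}$ has dimension $m'$ by definition. Then I would split off the "convergent part": by the analogue of Theorem~\ref{thm:dec} over the base $\mathcal{E}$ — or, more directly, by the generic decomposition according to whether a Jordan–Hölder constituent becomes trivial after $\tau^*$ and base change to $\mathcal{E}\{X-t\}$ — write $M=M_{\mathrm{conv}}\oplus M'$ where $M_{\mathrm{conv}}$ is the maximal submodule with $(M_{\mathrm{conv}}\otimes\mathcal{E}\{X-t\})^{\nabla=0}$ of full rank $m'$, and $M'$ has zero solution space over $\mathcal{E}\{X-t\}$. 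Applying the generic Dwork theorem to $M_{\mathrm{conv}}$ (rank $m'$) gives $(M_{\mathrm{conv}}\otimes\mathcal{E}[\![X-t]\!]_{m'-1})^{\nabla=0}=(M_{\mathrm{conv}}\otimes\mathcal{E}\{X-t\})^{\nabla=0}$, while for $M'$ both sides are zero because a horizontal element in $M'\otimes\mathcal{E}[\![X-t]\!]_{m'-1}\subset M'\otimes\mathcal{E}\{X-t\}$ would be horizontal in $M'\otimes\mathcal{E}\{X-t\}$. Adding the two pieces and using that $\mathcal{E}[\![X-t]\!]_{m'-1}$ is stable under $D$ (so the decomposition of $M$ induces one on the $\nabla=0$ parts of each scalar extension) yields the claim.

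The main obstacle I anticipate is not the formal bookkeeping but pinning down exactly what "the generic version of Dwork's theorem" is and that it is legitimately available: one needs an analogue of Theorem~\ref{thm:Dw} with $K$ replaced by the (non-discretely-valued, but still complete) differential field $\mathcal{E}$, and one must be sure Dwork's original argument — or the transfer-theorem machinery of \cite{CT} that was used precisely in this generic Taylor setting — goes through. In fact this is exactly why the author phrases the remark as "easily deduce from the generic version of Dwork's theorem": the generic analogue of Theorem~\ref{thm:Dw} is presumably already in \cite{CT} (the Taylor morphism $\tau$ and the ring $\mathcal{E}[\![X-t]\!]_0$ are introduced there), so the only real work is the splitting into $M_{\mathrm{conv}}\oplus M'$, which is softer than anything in the Main Theorem because here we are already over a field and may freely use Jordan–Hölder / the decomposition of Theorem~\ref{thm:local} at the generic point, rather than having to globalize over an annulus. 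A secondary point to be careful about is that one should verify the solution space really does decompose compatibly, i.e.\ that $(M\otimes R)^{\nabla=0}=(M_{\mathrm{conv}}\otimes R)^{\nabla=0}\oplus(M'\otimes R)^{\nabla=0}$ for $R=\mathcal{E}\{X-t\}$ and $R=\mathcal{E}[\![X-t]\!]_{m'-1}$, which follows since the direct sum decomposition of $M$ is by $\mathcal{E}[\![X-t]\!]_0$-submodules stable under $D$ and scalar extension is exact.
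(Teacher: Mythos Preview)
Your proposal is correct and takes essentially the same route as the paper: split off the part with full solution space, apply the generic Dwork theorem there, and observe that the remainder contributes nothing. The paper streamlines the splitting step by applying Theorem~\ref{thm:local} directly to $V$ over $\mathcal{E}$---reducing at once to the case where $V$ has pure radii $r$, so that $m'=\dim_{\mathcal{E}}V$ when $r=1$ and $m'=0$ when $r<1$ by the geometric interpretation of $IR$---which makes your appeal to ``the analogue of Theorem~\ref{thm:dec}'' or to a splitting of $\tau^*V$ over $\mathcal{E}[\![X-t]\!]_0$ unnecessary.
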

\begin{proof}
We have only to prove that the RHS is contained in the LHS. By Theorem~\ref{thm:local}, we may assume that $V$ has pure radii $r$. Then,
\[
m'=
\begin{cases}
\dim_{\mathcal{K}}V&\text{if }r=1,\\
0&\text{if }0<r<1
\end{cases}
\]
by the geometric interpretation of the generic radius of convergence (\cite[Proposition~9.7.5]{pde}; Though the proposition treats only differential modules over $F_{\rho}$, the same proof works for differential modules over $\mathcal{E}$.). Hence, we may assume that $r=1$. Then, the assertion is nothing but the generic version of Dwork's theorem, i.e., Theorem~\ref{thm:Dw} for $K=\mathcal{E}$, $t=X-t$, and $M=\tau^*V$.
\end{proof}
%%%%%%%%%%%%%%%
%%%%%%%%%%%%%%%
%%%%%%%%%%%%%%%
\section{Appendix 1: Proof of theorem \ref{thm:dec}}

Theorem~\ref{thm:dec} plays a crucial role in the proof of Main Theorem. However, the proof of Theorem~\ref{thm:dec} in \cite{KX} is referred to \cite{pde} where the proof is left as an exercise (see Remark~12.5.3 loc. cit.). In this section, we will give a proof of Theorem~\ref{thm:dec} for the reader in a self-contained manner admitting some basic facts in \cite{pde}.

Throughout this section, we assume that $K$ is discretely valued.
%%%%%%%%%%%%%%%
%%%%%%%%%%%%%%%
%%%%%%%%%%%%%%%
\subsection*{Key Lemma}

We reduce Theorem~\ref{thm:dec} to the following lemma.

\begin{lem}[Key Lemma]\label{lem:main}
Let $M$ be a finite differential module over $K\langle\alpha/t,t]\!]_0$ of rank $m$. Assume that there exists $i\in\{0,\dots,m-1\}$ such that
\[
f_i(M,0)>f_{i+1}(M,0).
\]
Then, there exist $\alpha'\in [\alpha,1)$ and a differential submodule $M''$ of $M\otimes_{K\langle\alpha/t,t]\!]_0}K\langle\alpha'/t,t]\!]_0$ of rank $m-i$ such that
\[
f_{i+j}(M,0)=f_j(M'',0)\text{ for }j=1,\dots,m-i.
\]
\end{lem}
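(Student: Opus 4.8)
\emph{Strategy.} I would realize $M''$ as the image of a canonical horizontal idempotent. The plan is: (i) propagate the gap at $\rho=1$ to a neighbourhood using continuity; (ii) over $\mathcal{E}$, extract from the local decomposition an \emph{intrinsic} rank $m-i$ summand $N''$ carrying the $m-i$ largest subsidiary radii, together with its horizontal idempotent $\pi_{\mathcal E}$; (iii) show $\pi_{\mathcal E}$ is \emph{bounded}, i.e. already defined over some $K\langle\alpha'/t,t]\!]_0$; (iv) take $M'':=\pi_{\mathcal E}(M\otimes_{K\langle\alpha/t,t]\!]_0}K\langle\alpha'/t,t]\!]_0)$.

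The case $i=0$ is vacuous (take $M''=M$, $\alpha'=\alpha$), so let $1\le i\le m-1$. Since $f_i(M,-)$ and $f_{i+1}(M,-)$ are continuous (Theorem~\ref{thm:cont}), the inequality $f_i(M,0)>f_{i+1}(M,0)$ persists on $[0,-\log\alpha_1]$ for some $\alpha_1\in[\alpha,1)$; as base change to a smaller annulus commutes with $\otimes\mathcal{E}$ and leaves the $f_j(M,0)$ unchanged, I replace $\alpha$ by $\alpha_1$ and assume henceforth that $\mathfrak{IR}(M\otimes\mathcal{E})$, and each $\mathfrak{IR}(M\otimes F_\rho)$ with $\rho\in[\alpha,1)$, has a strict gap between its $i$-th and $(i+1)$-st smallest elements. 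By Theorem~\ref{thm:local} applied over $\mathcal{E}$, grouping the pure-radii summands into those of radius $\le IR(M\otimes\mathcal{E};i)$ and those of radius $\ge IR(M\otimes\mathcal{E};i+1)$ (the gap making this a clean cut of ranks $i$ and $m-i$) gives a decomposition of differential modules $M\otimes_{K\langle\alpha/t,t]\!]_0}\mathcal{E}=N'\oplus N''$ with $\mathfrak{IR}(N'')=\{IR(M\otimes\mathcal{E};i+1),\dots,IR(M\otimes\mathcal{E};m)\}$, equivalently a horizontal idempotent $\pi_{\mathcal E}\in\mathrm{End}(M)\otimes_{K\langle\alpha/t,t]\!]_0}\mathcal{E}$ onto $N''$. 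By Lemma~\ref{lem:exact}, $N''$ is the unique direct summand of $M\otimes\mathcal{E}$ of rank $m-i$ all of whose intrinsic generic subsidiary radii exceed $\exp(-f_i(M,0))$; in particular $\pi_{\mathcal E}$ is independent of the choices in Theorem~\ref{thm:local}, and (fixing a basis of $M$) its entries are Laurent series with bounded coefficients, i.e. with finite $|\ |_1$.

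Now the main point, (iii). For each $\beta\in(\alpha,1)$ the functions $f_i(M,-),f_{i+1}(M,-)$ do not cross on $[0,-\log\beta]$, so the decomposition theorem over the closed annulus $\alpha\le|t|\le\beta$ — which follows from the matching theorem of \cite{pde} and which I take as one of the admitted basic facts — yields a rank $m-i$ direct-summand differential submodule of $M\otimes_{K\langle\alpha/t,t]\!]_0}K\langle\alpha/t,t/\beta\rangle$ (analytic functions on $\alpha\le|t|\le\beta$) realizing the $m-i$ largest subsidiary radii throughout $[\alpha,\beta]$, cut out by an idempotent $\pi_\beta$. By uniqueness of such decompositions the $\pi_\beta$ are mutually compatible as $\beta\uparrow 1$ and glue to a single horizontal idempotent over the ring of analytic functions on $\alpha\le|t|<1$; comparing over $M\otimes\mathcal{E}$ and invoking the intrinsic characterization above identifies this idempotent with $\pi_{\mathcal E}$. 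Hence the entries of $\pi_{\mathcal E}$, being both analytic on $\alpha\le|t|<1$ and of bounded coefficients, lie in $K\langle\alpha/t,t]\!]_0$ (immediate from the definition of that ring). Therefore $\pi_{\mathcal E}\in\mathrm{End}_{K\langle\alpha/t,t]\!]_0}(M)$, and since $\pi_{\mathcal E}^2=\pi_{\mathcal E}$ and $D\pi_{\mathcal E}=0$, the module $M'':=\pi_{\mathcal E}(M)$ is a direct summand, hence finite free of rank $m-i$, and is $D$-stable, i.e. a differential submodule of $M$ over $K\langle\alpha/t,t]\!]_0=K\langle\alpha'/t,t]\!]_0$. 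Finally $M''\otimes_{K\langle\alpha/t,t]\!]_0}\mathcal{E}=\pi_{\mathcal E}(M\otimes\mathcal{E})=N''$, so $f_j(M'',0)=-\log IR(N'';j)=-\log IR(M\otimes\mathcal{E};i+j)=f_{i+j}(M,0)$ for $j=1,\dots,m-i$, which is the assertion.

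I expect the real obstacle to be the identification, inside step (iii), of the glued closed-annulus idempotent with $\pi_{\mathcal E}$ — equivalently, the boundedness of the glued idempotent near $\rho=1$. Since $\mathcal{E}$ is the completed residue field at the Gauss point $\rho=1$ while the rings of analytic functions on the closed sub-annuli are completions of $\mathrm{Frac}(K\langle\alpha/t,t]\!]_0)$ at incomparable Gauss norms, there is no direct base-change map between them, so the comparison genuinely requires the continuity and piecewise-affineness of the $f_i$ (Theorem~\ref{thm:cont}) together with the matching-theorem machinery of \cite{pde}; given that, the rest of the argument is formal bookkeeping.
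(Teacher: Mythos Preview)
Your strategy has a genuine gap at precisely the point you flag: the identification of the glued closed-annulus idempotent $\pi \in \mathrm{End}(M)\otimes K\langle\alpha/t,t\}$ with $\pi_{\mathcal E}\in\mathrm{End}(M)\otimes\mathcal E$. You write ``comparing over $M\otimes\mathcal E$'', but there is no ring homomorphism $K\langle\alpha/t,t\}\to\mathcal E$ (an unbounded analytic function on the half-open annulus need not have bounded coefficients), so the glued $\pi$ cannot be base-changed to $\mathcal E$ and your uniqueness argument has no ring in which to run. Nor is there a common overring: inside formal Laurent series one has $K\langle\alpha/t,t\}\cap\mathcal E = K\langle\alpha/t,t]\!]_0$, but knowing the two idempotents land there is exactly the conclusion. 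Continuity of the $f_i$ tells you the \emph{gap} persists, but gives no control on the \emph{size} of the projector as $\rho\uparrow 1$; there is no a priori bound on $\sup_\beta|\pi_\beta|_1$. And appealing to ``the matching-theorem machinery of \cite{pde}'' is circular here: the extension of the closed-annulus decomposition to $K\langle\alpha/t,t]\!]_0$ is precisely what \cite[Remark~12.5.3]{pde} leaves as an exercise, which is the content of this lemma and of Theorem~\ref{thm:dec}.

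The paper's proof is entirely different and avoids this comparison. When $f_i(M,0)>-\log\omega$, it passes to a cyclic vector (over $\mathcal E^\dagger$, then shrinks the annulus) to write $M\cong K\langle\alpha/t,t]\!]_0\{T\}/(R)$, reads the gap as a vertex of the Newton polygon of $R$ for the $1$-Gauss norm, and applies a Fr\'echet version of Robba's Hensel lemma for twisted polynomials (Proposition~\ref{prop:Hensel}), run \emph{simultaneously} for all Gauss norms $|\cdot|_\rho$ with $\rho\in[\alpha,1]$, to factor $R=Q'P'$ directly over $K\langle\alpha/t,t]\!]_0$; then $M''=K\langle\alpha/t,t]\!]_0\{T\}P'/(R)$. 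For general $f_i(M,0)>0$ it reduces to this case by Frobenius pushforward: the induction hypothesis gives a submodule of $\varphi_*M$ over $K\langle\alpha^p/t^p,t^p]\!]_0$, and an explicit descent (Lemmas~\ref{lem:des}, \ref{lem:G-pull}, \ref{lem:push}) reconstructs the desired submodule of $M$. Boundedness near $\rho=1$ is thus obtained constructively from the Hensel iteration, not by an a posteriori identification.
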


\begin{proof}[{Proof of Theorem~\ref{thm:dec} assuming Lemma~\ref{lem:main}}]
We may freely replace $M$ by $M\otimes_{K\langle\alpha/t,t]\!]_0}K\langle\alpha'/t,t]\!]_0$ for any $\alpha'\in [\alpha,1)$. Hence, by Lemma~\ref{lem:main}, we may assume that there exists a differential submodule $M''\subset M$ of rank $m-i$ such that
\[
f_{i+j}(M,0)=f_j(M'',0)\text{ for }j=1,\dots,m-i.\tag{a}
\]
Denote by $M\spcheck$ the dual of $M$. Since $f_j(M,0)=f_j(M\spcheck,0)$ for all $j$ (\cite[Lemma~6.2.8~(b)]{pde}), by Lemma~\ref{lem:main} again, there exists a differential submodule $\mathcal{M}\subset M\spcheck$ of rank $m-i$ such that
\[
f_j(\mathcal{M},0)=f_{i+j}(M\spcheck,0)=f_{i+j}(M,0)\text{ for }j=1,\dots,m-i.
\]
Let $(\ ,\ ):M\otimes_{K\langle\alpha/t,t]\!]_0}M\spcheck\to K\langle\alpha/t,t]\!]_0$ be the canonical perfect pairing and
\[
M':=\{v\in M;(v,w)=0\ \forall w\in\mathcal{M}\}.
\]
Since $M'\cong (M\spcheck/\mathcal{M})\spcheck$, $M'$ is a differential submodule of $M$ of rank $i$ such that
\[
f_j(M',0)=f_j(M,0)\text{ for }j=1,\dots,i.\tag{b}
\]
Since
\[
f_i(M',0)=f_i(M,0)>f_{i+1}(M,0)=f_1(M'',0)\tag{c}
\]
by (a) and (b), we have $\mathfrak{IR}(M'\otimes_{K\langle\alpha/t,t]\!]_0}\mathcal{E})\cap\mathfrak{IR}(M''\otimes_{K\langle\alpha/t,t]\!]_0}\mathcal{E})=\emptyset$, which implies $M'\cap M''=\{0\}$. By comparing ranks, $M'\oplus M''=M$. By (c) and the continuity of the $f_i$'s (Theorem~\ref{thm:cont}), we may assume that
\[
f_i(M',r)>f_1(M'',r)\text{ for }r\in [0,-\log{\alpha}]
\]
by choosing $\alpha$ sufficiently close to $1$ if necessary. By the definition of the $f_i$'s,
\[
f_j(M,r)=f_j(M',r)\text{ for }r\in [0,-\log{\alpha}]\text{ and }j=1,\dots,i
\]
which implies the assertion.
\end{proof}
%%%%%%%%%%%%%%%
%%%%%%%%%%%%%%%
%%%%%%%%%%%%%%%
\subsection*{Notation on differential rings}

A {\it differential ring} $(R,d_R)$ is a commutative ring $R$ with a derivation $d_R:R\to R$. A {\it homomorphism} of differential rings $f:(R,d_R)\to (S,d_S)$ is a ring homomorphism $f:R\to S$ such that $d_S\circ f=f\circ d_R$. When no confusion arises, we write $(R,d)$, $(S,d)$ for $(R,d_R)$, $(S,d_S)$.

A {\it differential $(R,d_R)$-module} $(M,D)$ is an $R$-module $M$ (we do not assume a freeness in the following) with a differential $D:M\to M$ satisfying
\[
D(r\cdot m)=dr\cdot m+r\cdot D(m),\ r\in R,m\in M.
\]
We denote by $\mathrm{Mod}(R,d_R)$ the category of differential $(R,d_R)$-modules. A {\it differential $(R,d_R)$-submodule} $(M_0,D_0)$ of $(M,D)$ is an $R$-submodule $M_0$ of $M$ together with a differential $D_0:M_0\to M_0$ such that $D|_{M_0}=D_0$. For simplicity, we write $(M_0,D)$ for $(M_0,D_0)$. Note that a differential submodule of $(M,D)$ is nothing but a subobject of $(M,D)$. In the following, we consider the situation where $R$ is endowed with multiple derivations (see Example~\ref{ex:Frob}), hence, we basically do not omit derivations or differentials.

We define the {\it pull-back} and {\it push-out} of a homomorphism of differential rings $f:(R,d_R)\to (S,d_S)$
\begin{gather*}
f^*:\mathrm{Mod}(R,d_R)\to \mathrm{Mod}(S,d_S),\\
f_*:\mathrm{Mod}(S,d_S)\to \mathrm{Mod}(R,d_R)
\end{gather*}
as follows. For $(M,D)\in \mathrm{Mod}(R,d_R)$, let $f^*(M,D)=(f^*M,f^*D)$ where $f^*M=M\otimes_RS$, $f^*D(m\otimes s)=D(m)\otimes s+m\otimes ds$ for $m\in M$, $s\in S$. For $(N,D)\in \mathrm{Mod}(S,d_S)$, let $f_*(N,D)=(f_*N,f_*D)$ where $f_*N=N$ whose $R$-structure is defined via $f$, and $f_*D(n)=D(n)$.

%%%%%%%%%%%%%%%%%%%%%%%
%%%%%%%%%%%%%%%%%%%%%%%
%%%%%%%%%%%%%%%%%%%%%%%

Let $(R,d_R)$, $(S,d_S)$, $(T,d_T)$, $(U,d_U)$ be differential rings. Assume that
\begin{equation}\label{xy:car}
\begin{xymatrix}{
S\ar[r]^{\beta}&U\\
R\ar[r]^{\gamma}\ar[u]^{\alpha}&T\ar[u]^{\delta}
}\end{xymatrix}
\end{equation}
is a cocartesian diagram of commutative rings such that $\alpha$, $\beta$, $\gamma$, and $\delta$ induce morphisms of differential rings. Then, we define the derivation $d_{S\otimes T}$ on $S\otimes_R T$ by
\[
d_{S\otimes T}(s\otimes t)=d_S(s)\otimes t+s\otimes d_T(t)
\]
and assume that the isomorphism
\[
\beta\otimes\gamma:S\otimes_RT\to U
\]
induces an isomorphism of differential rings $(S\otimes_R T,d_{S\otimes T})\to(U,d_U)$. In this case, we say that (\ref{xy:car}) is a {\it cocartesian diagram of differential rings}. Note that the functors
\[
\gamma^*\alpha_*,\delta_*\beta^*:\mathrm{Mod}(S,d_S)\to \mathrm{Mod}(T,d_T)
\]
are naturally isomorphic to each other. In the following, we identify $\gamma^*\alpha_*$ as $\delta_*\beta^*$.
%%%%%%%%%%%%%%%
%%%%%%%%%%%%%%%
%%%%%%%%%%%%%%%
\subsection*{Frobenius}
In this subsection, we assume the following.

\begin{ass}\label{ass:Frob}
Let $\varphi:(R',d')\to (R,d')$ be a homomorphism of differential rings. Assume that there exists $t\in R^{\times}$ such that $d't/t\in\varphi(R')$, and $\{1,t,\dots,t^{p-1}\}$ is a basis of $R$ as an $R'$-module, where $R$ is regarded as an $R'$-module via $\varphi$.
\end{ass}

Note that $\varphi$ is flat and we regard $R'$ as a subring of $R$. For an $R$-module $M$, we define an $R'$-module homomorphism $\psi_i:\varphi_*M\to\varphi_*M$ for $i\in\mathbb{N}$ by the map induced by the multiplication by $t^i$ via $\varphi$. Note that $\psi_0=\mathrm{id}_{\varphi_*M}$. Also note that if $(M,D')\in \mathrm{Mod}(R,d')$, then we have $\varphi_*D'\circ\psi_i=(id't/t)\psi_{i}+\psi_i\circ \varphi_*D'$ for $i\ge 1$.

\begin{ex}\label{ex:Frob}
\begin{enumerate}
\item[(i)](cf. \cite[Remark~10.3.5]{pde}) Let $\alpha\in (0,1)$ and $(K\langle\alpha/t,t]\!]_0,d)$ as in Definition~\ref{dfn:ring}. We apply the same construction as $(K\langle\alpha/t,t]\!]_0,d)$ replacing $\alpha$, $t$ by $\alpha^p$, $t^p$ to obtain $(K\langle\alpha^p/t^p,t^p]\!]_0,d')$. That is,
\[
K\langle\alpha^p/t^p,t^p]\!]_0:=\left\{\sum_{i\in\mathbb{Z}}a_i(t^p)^i;a_i\in K, \lim_{i\to-\infty}{|a_i|\alpha^{pi}}=0,\ \sup_{i\in\mathbb{Z}}{|a_i|}<\infty\right\}
\]
and $d'=d/d(t^p)$. We endow $K\langle\alpha/t,t]\!]_0$ with another derivation $d'=(pt^{p-1})^{-1}d$.

Let
\[
\varphi:K\langle\alpha^p/t^p,t^p]\!]_0\to K\langle\alpha/t,t]\!]_0;\sum_{i\in\mathbb{Z}}a_i(t^p)^i\mapsto\sum_{i\in\mathbb{Z}}a_it^{pi}
\]
be the ring homomorphism, which induces a homomorphism of differential rings $(K\langle\alpha^p/t^p,t^p]\!]_0,d')\to (K\langle\alpha/t,t]\!]_0,d')$. Then, $(\varphi,t)$ satisfies Assumption~\ref{ass:Frob}. Indeed, let $\sum_{i}a_it^{i}\in K\langle\alpha/t,t]\!]_0$ with $a_i\in K$. Then, we can write
\[
\sum_{i\in\mathbb{Z}}a_it^{i}=\sum_{j=0}^{p-1}\left(\sum_{\substack{i\ge 0\\ i\equiv j \mod{p}}}{a_i(t^p)^{(i-j)/p}}+\sum_{\substack{i>0\\ i\equiv -j \mod{p}}}{a_{-i}(t^p)^{-(i+j)/p}}\right)t^j,
\]
where the coefficient of $t^j$ belongs to $K\langle\alpha^p/t^p,t^p]\!]_0$. We prove the linear independence of $\{1,\dots,t^{p-1}\}$ as follows. Assume that we have a relation $a^{(0)}t^0+\dots+a^{(p-1)}t^{p-1}=0$ with $a^{(j)}\in K\langle\alpha^p/t^p,t^p]\!]_0$. Write $a^{(j)}=\sum_{n}a_{np+j}(t^p)^n$ with $a_{np+j}\in K$. Then,
\[
0=\sum_{j=0}^{p-1}a^{(j)}t^j=\sum_{j=0}^{p-1}\sum_{n\in\mathbb{Z}}a_{np+j}t^{np+j},
\]
which implies $a_{np+j}=0$ for all $n$, $j$.
\item[(ii)] (\cite[Definition~10.3.1]{pde}) Let $(\mathcal{E},d)$ be as in Definition~\ref{dfn:Amice}. We apply the same construction as $(\mathcal{E},d)$ replacing $t$ by the new variable $t^p$ to obtain $(\mathcal{E}',d')$. That is, $\mathcal{E}'$ is the fraction field of the $p$-adic completion of $\mathcal{O}_K[\![t^p]\!][(t^p)^{-1}]$ and $d'=d/d(t^p)$. We endow $\mathcal{E}$ with another derivation $d'=(pt^{p-1})^{-1}d$. Let $\Phi:\mathcal{E}'\to\mathcal{E}$ be the inclusion, which induces a homomorphism of differential rings $(\mathcal{E}',d')\to (\mathcal{E},d')$ Then, $(\Phi,t)$ satisfies Assumption~\ref{ass:Frob}. Note that there exists a cocartesian diagram of differential rings
\[
\begin{xymatrix}{
(K\langle\alpha/t,t]\!]_0,d')\ar[r]^(.6)H&(\mathcal{E},d')\\
(K\langle\alpha^p/t^p,t^p]\!]_0,d')\ar[r]^(.6){h}\ar[u]^{\varphi}&(\mathcal{E}',d').\ar[u]^{\Phi}
}\end{xymatrix}
\]
\end{enumerate}
\end{ex}

\begin{lem}\label{lem:des}
Let $(M,D')\in \mathrm{Mod}(R,d')$ and $(N',\varphi_*D')\subset (\varphi_*M,\varphi_*D')\in\mathrm{Mod}(R',d')$ a subobject. Then, the following are equivalent.
\begin{enumerate}
\item[(i)] There exists a unique subobject $(N,D')\subset (M,D')$ such that
\[
(N',\varphi_*D')=(\varphi_*N,\varphi_*D').
\]
\item[(ii)] For $i=0,\dots,p-1$, $\psi_i(N')\subset N'$.
\end{enumerate}
\end{lem}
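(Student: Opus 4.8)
The statement to prove is Lemma~\ref{lem:des}, characterizing when a subobject of the pushforward $\varphi_*M$ descends to a subobject of $M$.

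\medskip

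The plan is to prove the two implications separately, with the substance concentrated in (ii)$\Rightarrow$(i). The direction (i)$\Rightarrow$(ii) is the easy one: if $N' = \varphi_*N$ for an $R$-submodule $N\subset M$, then $N$ is in particular stable under multiplication by $t$ (as $t\in R^\times$ acts on $M$ and carries $N$ to $N$ since $N$ is an $R$-module), hence $\psi_i(N') = t^iN \subset N = N'$ for all $i$; one does not even need $D'$ here. For (ii)$\Rightarrow$(i), the idea is to \emph{define} $N$ as the $R$-submodule of $M$ generated by $N'$, i.e.\ $N := \sum_{i=0}^{p-1} t^i N' = R\cdot N'$, using that $\{1,t,\dots,t^{p-1}\}$ is an $R'$-basis of $R$. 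First I would check that $N$ is a free $R$-module and that $\varphi_*N = N'$: the inclusion $N'\subset N$ is clear, and conversely an element of $N$ has the form $\sum_{i=0}^{p-1} t^i n_i$ with $n_i\in N'$, which by hypothesis (ii) lies in $N'$; combined with the direct-sum decomposition $\varphi_*M = \bigoplus_{i=0}^{p-1} t^i (\varphi_*M')$ coming from Assumption~\ref{ass:Frob}, this forces $\varphi_*N = N'$ and in particular shows $N\cap t^i(\text{stuff})$ behaves well, giving freeness.

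\medskip

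Next I would verify that $N$ is stable under $D' : M\to M$. Since $N = R\cdot N'$, it suffices to show $D'(r\cdot n')\in N$ for $r\in R$, $n'\in N'$; by the Leibniz rule $D'(r n') = d'(r)\cdot n' + r\cdot D'(n')$, and $d'(r)\cdot n'\in R\cdot N' = N$, so we are reduced to $D'(n')\in N$ for $n'\in N'$. Now $N'$ is by hypothesis a subobject of $(\varphi_*M,\varphi_*D')$, and $\varphi_*D'$ is just $D'$ with the ring action restricted along $\varphi$; hence $D'(n') = (\varphi_*D')(n')\in N'\subset N$. This gives the subobject $(N,D')\subset (M,D')$. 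Uniqueness is immediate: any $R$-submodule $N_1\subset M$ with $\varphi_*N_1 = N'$ must satisfy $N_1\supset N'$ and $N_1$ is an $R$-module, so $N_1\supset R\cdot N' = N$, while $\varphi_*N_1 = N' = \varphi_*N$ together with $N\subset N_1$ and the faithfulness of $\varphi_*$ on submodules (flatness of $\varphi$) forces $N_1 = N$.

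\medskip

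The main obstacle I anticipate is purely bookkeeping: making precise that $\varphi_*N = N'$ as \emph{differential} $R'$-modules and not merely as $R'$-modules, i.e.\ that the differential $\varphi_*D'$ on $\varphi_*N$ agrees with the one on $N'$. This is really a tautology once one unwinds that $\varphi_*$ does not alter the underlying additive group or the map $D'$, only the scalar action; so I would spell this out carefully but expect no genuine difficulty. The one place to be slightly careful is that $t\in R^\times$ so that $\psi_i$ for $i = 1,\dots,p-1$ are injective, and that the decomposition $\varphi_*M = \bigoplus_{i=0}^{p-1}\psi_i(\varphi_*M)$ identifies $N'$ with $\bigoplus_{i=0}^{p-1}\psi_i(N'')$ for a suitable $R'$-submodule $N''\subset \varphi_*M'$ — but this is exactly what hypothesis (ii) provides, and it is what makes $N = R\cdot N'$ come out free of the expected rank.
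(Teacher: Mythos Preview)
Your core argument is correct and matches the paper's proof: for (i)$\Rightarrow$(ii) you note that an $R$-submodule is automatically stable under multiplication by $t^i$, and for (ii)$\Rightarrow$(i) you observe that condition (ii) lets you upgrade the $R'$-module structure on $N'$ to an $R$-module structure (equivalently, $R\cdot N' = N'$ as subsets of $M$), after which $D'$-stability and uniqueness are immediate. The paper does exactly this, only more tersely.

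However, your concerns about freeness are unnecessary and lead you into confused territory. In this subsection the paper explicitly states ``we do not assume a freeness in the following,'' so a subobject in $\mathrm{Mod}(R,d')$ is just a $D'$-stable $R$-submodule, with no freeness required. Consequently the passages invoking a decomposition $\varphi_*M = \bigoplus_{i=0}^{p-1} t^i(\varphi_*M')$ and an auxiliary $N''\subset \varphi_*M'$ should be dropped: there is no $M'$ in the setup (nothing says $M$ descends along $\varphi$), and you do not need any such decomposition to conclude. Once you show $N := R\cdot N' = N'$ as sets and check $D'(N)\subset N$, you are done.
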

\begin{proof}
(i)$\Rightarrow$(ii)
\[
\psi_i(N')=\psi_i(\varphi_*N)=\varphi_*(t^iN)\subset\varphi_*N=N'.
\]
\noindent (ii)$\Rightarrow$(i) The uniqueness is obvious. We prove the existence. The $R'$-module structure on $\varphi_*M$ extends to an $R$-module structure by assumption. Indeed, the multiplication by $t^i$ is defined by $\psi_i$. Let $N$ be the $R$-module defined in this way. Then, $N$ is an $R$-submodule of $M$ and $D'(N)\subset N$. It is obvious $(N,D')$ satisfies the condition.
\end{proof}

\begin{dfn}\label{lem:G}
Let $(M,D')\in \mathrm{Mod}(R,d')$ and $(N',\varphi_*D')\subset (\varphi_*M,\varphi_*D')\in\mathrm{Mod}(R',d')$ a subobject. We define the subobject $\mathcal{G}_{\varphi}(N',\varphi_*D')$ of $(\varphi_*M,\varphi_*D')$ by
\[
\mathcal{G}_{\varphi}(N',\varphi_*D'):=\left(\sum_{i=0}^{p-1}\psi_i(N'),\varphi_*D'\right).
\]
Note that since $\psi_i(\sum_{i'=0}^{p-1}\psi_{i'}(N'))\subset\sum_{i'=0}^{p-1}\psi_{i'}(N')$, there exists, by Lemma~\ref{lem:des}, a unique subobject $(N,D')$ of $(M,D')$ such that $(\varphi_*N,\varphi_*D')=\mathcal{G}_{\varphi}(N',\varphi_*D')$.
\end{dfn}

Let
\[
\begin{xymatrix}{
(R,d')\ar[r]^H&(\mathcal{R},d')\\
(R',d')\ar[r]^{h}\ar[u]^{\varphi}&(\mathcal{R}',d')\ar[u]^{\Phi}
}\end{xymatrix}
\]
be a cocartesian diagram of differential rings. Assume that there exists $t\in R$ such that $(\varphi,t)$ and $(\Phi,H(t))$ satisfy Assumption~\ref{ass:Frob}. 

\begin{lem}\label{lem:G-pull}
Let notation be as above. Let $(M,D')\in \mathrm{Mod}(R,d')$ and $(N',\varphi_*D')\subset (\varphi_*M,\varphi_*D')$ a subobject. Then, $(h^*N',h_*\varphi^*D')\subset (h^*\varphi_*M,h^*\varphi_*D')$ is a subobject, and as subobjects of $(h^*\varphi_*M,h^*\varphi_*D')$,
\[
h^*\mathcal{G}_{\varphi}(N',\varphi_*D')=\mathcal{G}_{\Phi}(h^*N',h^*\varphi_*D').
\]
\end{lem}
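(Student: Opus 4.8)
The plan is to unwind all the functors explicitly on a basis. Since $(\varphi,t)$ satisfies Assumption~\ref{ass:Frob}, the set $\{1,t,\dots,t^{p-1}\}$ is an $R'$-basis of $R$, and by the hypothesis $(\Phi,H(t))$ satisfies Assumption~\ref{ass:Frob} as well, so $\{1,H(t),\dots,H(t)^{p-1}\}$ is an $\mathcal{R}'$-basis of $\mathcal{R}$. First I would record the compatibility $h\circ\varphi_* \cong \Phi_*\circ H^*$ coming from the cocartesian square: concretely, for $(M,D')\in\mathrm{Mod}(R,d')$ one has a canonical isomorphism of $\mathcal{R}'$-modules
\[
h^*\varphi_*M = (\varphi_*M)\otimes_{R'}\mathcal{R}' \;\xrightarrow{\ \sim\ }\; \Phi_*(M\otimes_R\mathcal{R}) = \Phi_*H^*M,
\]
sending $m\otimes c$ to $H(m)\otimes c$, and this is compatible with the differentials $h^*\varphi_*D'$ and $\Phi_*H^*D'$ because the square is a cocartesian diagram of \emph{differential} rings. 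Under this identification, $h^*N'$ is an $\mathcal{R}'$-submodule of $\Phi_*H^*M$ which is automatically $H^*D'$-stable since $N'$ is $\varphi_*D'$-stable and $h$ is flat; this gives the first assertion.

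Next I would check that the operators $\psi_i$ (multiplication by $t^i$, resp. by $H(t)^i$) intertwine the horizontal maps: the diagram
\[
\begin{xymatrix}{
\varphi_*M\ar[r]^{\psi_i}\ar[d]&\varphi_*M\ar[d]\\
h^*\varphi_*M\ar[r]^{h^*\psi_i}&h^*\varphi_*M
}\end{xymatrix}
\]
commutes by construction (base change of the $R'$-linear map $\psi_i$ along $h$), and under the identification $h^*\varphi_*M\cong\Phi_*H^*M$ the bottom map becomes $\Phi_*$ of multiplication by $H(t)^i$ on $H^*M$, i.e.\ the operator defining $\mathcal{G}_\Phi$. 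Granting this, the computation is formal:
\[
h^*\mathcal{G}_{\varphi}(N',\varphi_*D') = h^*\!\left(\sum_{i=0}^{p-1}\psi_i(N')\right) = \sum_{i=0}^{p-1} (h^*\psi_i)(h^*N') = \sum_{i=0}^{p-1}\psi_i(h^*N') = \mathcal{G}_{\Phi}(h^*N',h^*\varphi_*D'),
\]
where the second equality uses that $h^*$ (being base change along the flat map $h$) commutes with finite sums of submodules, and the third uses the commuting square above. This proves the displayed identity.

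The main point to be careful about, and the one I expect to be the only real obstacle, is justifying that $h^*$ commutes with the formation of the submodule $\sum_i\psi_i(N')$ inside $\varphi_*M$: in general $-\otimes_{R'}\mathcal{R}'$ need not preserve submodules, but it does when $\mathcal{R}'$ is flat over $R'$, which holds here since $h$ arises from the Frobenius-type cocartesian square (indeed $\mathcal{R}'\otimes_{R'}R\cong\mathcal{R}$ is free over $\mathcal{R}'$ and $R$ is faithfully flat over $R'$, so flatness of $h$ follows from the standard descent of flatness along the faithfully flat $\varphi$; alternatively one checks $h$ is flat directly in the two cases of Example~\ref{ex:Frob}). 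Once flatness is in hand, every step is a diagram chase, so I would state the flatness of $h$ as the one substantive input and then present the two commuting squares and the four-term chain of equalities above. The only bookkeeping is to keep track of the differentials, but since all maps in sight are morphisms of differential rings or base changes thereof, the $D'$-stability statements come for free and need no separate argument.
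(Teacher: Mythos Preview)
Your argument is correct and is exactly the paper's approach, just expanded: the paper's entire proof is the single sentence ``Since $\psi_i$ and $h^*$ commute, we obtain the equality as $\mathcal{R}'$-modules, which implies the assertion,'' and your commuting-square and four-term chain of equalities spell this out. One small caveat: your descent argument for flatness of $h$ reduces to flatness of $H:R\to\mathcal{R}$, which you have not justified in the abstract setup; your alternative (checking directly in Example~\ref{ex:Frob}, where $\mathcal{R}'=\mathcal{E}'$ is a field) is the clean fix and is all that is needed for the application.
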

\begin{proof}
Since $\psi_i$ and $h^*$ commute, we obtain the equality as $\mathcal{R}'$-modules, which implies the assertion.
\end{proof}

\subsection*{Local computation}

In this subsection, we use the notation in Example~\ref{ex:Frob}~(ii). We also use the following notation.

For a non-negative real number $s$, let $\phi(s):=\inf{\{s^p,s/p\}}$. Note that $\phi(s)=s/p$ if $s\le\omega$ and $\phi(s)=s^p$ if $s\ge\omega$, and $\phi$ is strictly increasing on $(0,\infty)$. For $S=\{s_1,\dots,s_m\}$ a multiset of non-negative real numbers, let
\[
\phi(S):=\bigcup_{i=1}^m\begin{cases}
\{s_i^p,\omega^p\ (p-1\text{ times})\}&\text{if }s_i>\omega,\\
\{s_i/p\ (p\text{ times})\}&\text{if }s_i\le\omega.
\end{cases}
\]
Then, for multisets $S$, $S'$, we have $\phi(S)=\phi(S')$ if and only if $S=S'$. This follows from the fact that the elements of $\phi(S)$ strictly greater than $\omega^p$ coincides with the $p$-th power of the elements of $S$ strictly greater than $\omega$.

\begin{lem}\label{lem:IR}
For $(M,D)\in\mathrm{Mod}(\mathcal{E},d)$, we define $D':=(pt^{p-1})^{-1}D$ so that $(M,D')\in\mathrm{Mod}(\mathcal{E},d')$.
\begin{enumerate}
\item[(i)](\cite[Theorem~10.5.1]{pde}) Let $(M,D)\in\mathrm{Mod}(\mathcal{E},d)$. Then,
\[
\mathfrak{IR}(\Phi_*(M,D'))=\phi(\mathfrak{IR}(M,D)).
\]
\item[(ii)] Let $(M,D_1)$, $(N,D_2)\in \mathrm{Mod}(\mathcal{E},d)$. If there exists an isomorphism
\begin{equation}\label{eq:iso}
\Phi_*(M,D'_1)\cong \Phi_*(N,D'_2)
\end{equation}
in $\mathrm{Mod}(\mathcal{E}',d')$, then
\[
\mathfrak{IR}(M,D_1)=\mathfrak{IR}(N,D_2).
\]
\end{enumerate}
\end{lem}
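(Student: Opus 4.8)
The plan is to take part (i) as the cited result and to deduce part (ii) from it in a purely formal way, using the injectivity of the multiset operation $\phi$ recorded just above the lemma.

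For part (i) there is essentially nothing to do beyond invoking \cite[Theorem~10.5.1]{pde}: that theorem is exactly the statement that, after replacing the differential $D$ by $D'=(pt^{p-1})^{-1}D$, the intrinsic generic subsidiary radii transform under the Frobenius pushforward $\Phi_*$ by the rule defining $\phi(S)$ --- a radius $r>\omega$ producing $r^p$ together with $p-1$ copies of $\omega^p$, a radius $r\le\omega$ producing $p$ copies of $r/p$. (If one prefers, the passage from irreducible $M$ to arbitrary $M$ follows from the exactness of $\Phi_*$ together with the additivity of $\mathfrak{IR}$ in short exact sequences, Lemma~\ref{lem:exact}, applied to a Jordan-H\"older filtration.) I would keep this to a sentence or two.

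For part (ii): the multiset $\mathfrak{IR}(-)$ depends only on the isomorphism class of a differential module, since by definition it is read off from the multiset of Jordan-H\"older constituents (and $IR$ of an irreducible object is manifestly an isomorphism invariant). Hence the assumed isomorphism $\Phi_*(M,D_1')\cong\Phi_*(N,D_2')$ in $\mathrm{Mod}(\mathcal{E}',d')$ gives $\mathfrak{IR}(\Phi_*(M,D_1'))=\mathfrak{IR}(\Phi_*(N,D_2'))$. Applying part (i) to each side, this reads
\[
\phi(\mathfrak{IR}(M,D_1))=\phi(\mathfrak{IR}(N,D_2)).
\]
Now the combinatorial fact recorded before the lemma --- that $\phi(S)=\phi(S')$ forces $S=S'$ for multisets of non-negative real numbers --- gives $\mathfrak{IR}(M,D_1)=\mathfrak{IR}(N,D_2)$, as claimed.

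The deduction of (ii) is essentially automatic, so the genuine content lies in part (i) (i.e. in \cite[Theorem~10.5.1]{pde}) and in the injectivity of $\phi$, both of which are available. The only point inside this lemma that needs care is the bookkeeping with the two derivations: part (i) must be applied with $(M,D')$ on the Frobenius side but with $(M,D)$ on the subsidiary-radii side, so one should check that the conversion $D'=(pt^{p-1})^{-1}D$ enters on exactly one side and nowhere else.
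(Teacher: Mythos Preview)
Your proposal is correct and follows exactly the paper's approach: part (i) is taken as the cited result, and part (ii) is deduced by applying (i) to both sides of the isomorphism and then invoking the injectivity of $\phi$ on multisets. The paper's proof of (ii) is the one-line version of what you wrote.
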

\begin{proof}
(ii) The isomorphism (\ref{eq:iso}) implies that $\phi(\mathfrak{IR}(M,D_1))=\phi(\mathfrak{IR}(N,D_2))$ by (i), hence, $\mathfrak{IR}(M,D_1)=\mathfrak{IR}(N,D_2)$.
\end{proof}

Let $(M,D)\in \mathrm{Mod}(\mathcal{E},d)$. By Theorem~\ref{thm:local}, there exists a unique decomposition
\[
(M,D)=(M_{\le s},D)\oplus (M_{>s},D)
\]
such that any element of $\mathfrak{IR}(M_{\le s})$ (resp. $\mathfrak{IR}(M_{>s})$) is less than or equal to $s$ (resp. strictly greater than $s$). Indeed, let $M_{\le s}$ (resp. $M_{>s}$) be the direct sum of the $M_r$'s with $r\le s$ (resp. $r>s$). 
For $(\Phi_*M,\Phi_*D')\in \mathrm{Mod}(\mathcal{E}',d')$, we have a similar decomposition
\[
(\Phi_*M,\Phi_*D')=((\Phi_*M)_{\le \phi(s)},\Phi_*D')\oplus((\Phi_*M)_{>\phi(s)},\Phi_*D').
\]
The latter decomposition recovers the first one as follows.

\begin{lem}[cf. {\cite[Proof of Theorem~12.2.2]{pde}}]\label{lem:push}
Let notation be as above. As subobjects of $(\Phi_*M,\Phi_*D')$, we have
\[
\Phi_*(M_{>s},D')=\mathcal{G}_{\Phi}((\Phi_*M)_{>\phi(s)},\Phi_*D').
\]
\end{lem}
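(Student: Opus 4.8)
The plan is to prove the two subobjects of $(\Phi_*M,\Phi_*D')$ agree by showing each is contained in the other, exploiting the fact that $\mathcal{G}_\Phi$ produces the smallest $\varphi$-stable subobject containing a given one, and that pushforward along $\Phi$ of a submodule is automatically $\varphi$-stable. First I would observe that $(M_{>s},D')$ is a subobject of $(M,D')$ over $\mathcal{E}$ (here $D' = (pt^{p-1})^{-1}D$ has the same subobjects as $D$ since $pt^{p-1}\in\mathcal{E}^\times$), so $\Phi_*(M_{>s},D')$ is a subobject of $(\Phi_*M,\Phi_*D')$, and it is stable under all the $\psi_i$ because $t^i M_{>s}\subseteq M_{>s}$. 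By Lemma~\ref{lem:des}, $\Phi_*(M_{>s},D')$ is of the form $\Phi_*(\text{something})$, namely itself; in particular $\mathcal{G}_\Phi$ applied to it is itself.

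Next I would identify $(\Phi_*M)_{>\phi(s)}$ inside $\Phi_*(M_{>s})\oplus\Phi_*(M_{\le s})$. Applying Lemma~\ref{lem:IR}~(i), $\mathfrak{IR}(\Phi_*M_{>s}) = \phi(\mathfrak{IR}(M_{>s}))$ consists entirely of numbers $>\phi(s)$ (since $\phi$ is strictly increasing and $\phi(S)$ only enlarges the multiset by adding copies of $\omega^p \le \phi(s)$ precisely when the original element is $>\omega$... one must check $\omega^p > \phi(s)$ is forced, or rather that any element of $\phi(r)$ for $r>s$ exceeds $\phi(s)$: if $r>s\ge\omega$ then $r>\omega$ so $\phi(r) = \{r^p, \omega^p\ (p-1\ \text{times})\}$ and $\omega^p \le s^p = \phi(s)$, so this is \emph{not} automatic). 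So the correct statement is that $\mathfrak{IR}(\Phi_*M_{>s})$ need not be entirely $>\phi(s)$, and the relationship is more subtle; this is exactly why the lemma is phrased via $\mathcal{G}_\Phi$ rather than as a plain equality $\Phi_*M_{>s} = (\Phi_*M)_{>\phi(s)}$. The right approach: show $(\Phi_*M)_{>\phi(s)}\subseteq \Phi_*(M_{>s})$ (because $\Phi_*(M_{\le s})$ has all radii $\le\phi(s)$ by Lemma~\ref{lem:IR}~(i), hence contains no subobject with radii $>\phi(s)$), and then apply $\mathcal{G}_\Phi$: since $(\Phi_*M)_{>\phi(s)}\subseteq \Phi_*(M_{>s})$ and the latter is $\psi_i$-stable, $\mathcal{G}_\Phi((\Phi_*M)_{>\phi(s)}) \subseteq \Phi_*(M_{>s})$.

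For the reverse inclusion $\Phi_*(M_{>s}) \subseteq \mathcal{G}_\Phi((\Phi_*M)_{>\phi(s)})$, I would argue via subsidiary radii and rank count. By Lemma~\ref{lem:des}, $\mathcal{G}_\Phi((\Phi_*M)_{>\phi(s)}) = \Phi_*(N,D')$ for a unique subobject $N\subseteq M$; I want $N = M_{>s}$, or at least $N\supseteq M_{>s}$. Since $(\Phi_*M)_{>\phi(s)}\subseteq \Phi_*N$ and also $(\Phi_*M)_{>\phi(s)}$ is a direct summand, $\mathfrak{IR}(\Phi_*N)$ contains $\mathfrak{IR}((\Phi_*M)_{>\phi(s)})$, which by Lemma~\ref{lem:IR}~(i) applied to $M$ is the part of $\phi(\mathfrak{IR}(M))$ exceeding $\phi(s)$; comparing with $\phi(\mathfrak{IR}(N))$ and using the injectivity of $\phi$ on multisets (the displayed fact that $\phi(S)=\phi(S')\Leftrightarrow S=S'$, here in the form that large-part information is recoverable), one deduces $\mathfrak{IR}(N)$ contains all elements of $\mathfrak{IR}(M)$ that are $>s$, forcing $M_{>s}\subseteq N$ and hence $\Phi_*M_{>s}\subseteq \Phi_*N = \mathcal{G}_\Phi((\Phi_*M)_{>\phi(s)})$. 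Combined with the first inclusion, $\Phi_*(M_{>s},D') = \mathcal{G}_\Phi((\Phi_*M)_{>\phi(s)},\Phi_*D')$.

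The main obstacle I anticipate is the bookkeeping around $\phi$ and $\omega^p$: the map $\phi$ on multisets adds $p-1$ spurious copies of $\omega^p$ for each original radius exceeding $\omega$, so the naive equality $\Phi_*M_{>s} = (\Phi_*M)_{>\phi(s)}$ genuinely fails when $s<\omega$ (then $\phi(s)=s/p<\omega^p$ and the spurious $\omega^p$'s land in the $>\phi(s)$ part, but they come from $M_{>\omega}$, not just $M_{>s}$) — wait, one should double-check the direction; in any case the content of the lemma is precisely that $\mathcal{G}_\Phi$ corrects for this, and the proof must carefully track which subsidiary radii of $\Phi_*M$ come from $M_{\le s}$ versus $M_{>s}$, using that radii from $M_{\le s}$ are all $\le\max\{s^p, s/p\} = \phi(s)$ (strictly? — if $s>\omega$, an element $r\le s$ of $\mathfrak{IR}(M)$ with $r>\omega$ contributes $r^p\le s^p=\phi(s)$ and $\omega^p<\phi(s)$; if $r\le\omega$ it contributes $r/p\le\omega/p<\omega^p\le\phi(s)$ — so indeed all $\le\phi(s)$, with the only possible equality $r^p=s^p$ when $r=s$, handled by the direct-sum decomposition being at the threshold $\phi(s)$ and $M_{\le s}$ including radius exactly $s$). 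Getting these inequalities exactly right, and correctly invoking Lemma~\ref{lem:G-pull}-style commutation if needed, is where the care is required; the structural argument itself is short.
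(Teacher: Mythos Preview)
Your approach is correct in outline and takes a genuinely different route from the paper. The paper reduces via Theorem~\ref{thm:local} to the case where $M$ has pure radius $t$, then runs through seven cases depending on the relative positions of $t$, $s$, and $\omega$; in the nontrivial cases ($t>\omega$) it invokes the Frobenius antecedent $M'$ to write $\Phi_*M=\bigoplus_{i=0}^{p-1}\psi_i(M')$ and reads off both sides directly. Your two-inclusion argument is more structural and avoids the antecedent entirely: the first inclusion follows from minimality of $\mathcal{G}_\Phi$ once you verify $\mathfrak{IR}(\Phi_*M_{\le s})\subset(0,\phi(s)]$ (the bookkeeping you anticipate, which is routine), and the second from a radii comparison. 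The trade-off is that the paper's case split makes each step a one-line computation, while your method handles all cases uniformly but needs the $\phi$-bookkeeping spelled out.

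There is one genuine slip in the reverse inclusion. From ``$\mathfrak{IR}(N)$ contains all elements of $\mathfrak{IR}(M)$ that are $>s$'' you cannot deduce $M_{>s}\subseteq N$: multiset containment of subsidiary radii does not force submodule containment. The fix is to feed your first inclusion back in \emph{before} the radii comparison. You already have $N\subseteq M_{>s}$ (since $\mathcal{G}_\Phi((\Phi_*M)_{>\phi(s)})\subseteq\Phi_*(M_{>s})$), hence $\mathfrak{IR}(N)\subseteq\mathfrak{IR}(M_{>s})$; now the containment $\phi(\mathfrak{IR}(N))\supseteq[\phi(\mathfrak{IR}(M))]_{>\phi(s)}$ forces $\dim_{\mathcal{E}}N\ge\dim_{\mathcal{E}}M_{>s}$. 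Indeed, for $s<\omega$ the right-hand multiset is all of $\phi(\mathfrak{IR}(M_{>s}))$ and cardinality comparison suffices; for $s\ge\omega$ it equals $\{r^p:r\in\mathfrak{IR}(M)_{>s}\}$, whose elements are all $>\omega^p$ and therefore must come from the $p$-th-power part of $\phi(\mathfrak{IR}(N))$, giving $\{r\in\mathfrak{IR}(N):r>\omega\}\supseteq\mathfrak{IR}(M)_{>s}$. Either way $\dim N=\dim M_{>s}$, so $N=M_{>s}$.
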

\begin{proof}
We have only to prove the equality as sets. By Theorem~\ref{thm:local}, we may assume that $M$ has pure radii $t\in (0,\infty)$. We separate the cases as follows.

a-1. $t\le s\le \omega$, a-2. $s=\omega<t$, a-3. $s<\omega\le t$, a-4. $s<t\le\omega$.

b-1. $t\le \omega<s$, b-2. $\omega<t\le s$, b-3. $\omega<s<t$.

When $s<\omega$, either a-1, a-3, or a-4 occurs. When $s=\omega$, either a-1 or a-2 occurs. When $s>\omega$, either b-1, b-2, or b-3 occurs.

By using Lemma~\ref{lem:IR}~(i),
\[
(M_{>s},(\Phi_*M)_{>\phi(s)})=
\begin{cases}
(0,0)&\text{the case a-1 or b-1},\\
(M,\Phi_*M)&\text{the case a-3 or a-4}.
\end{cases}
\]
In these cases, the assertion is obvious. In the rest of the cases, i.e., a-2, b-2, or b-3, we have $\omega<t$, hence, there exists a Frobenius antecedent $M'$ of $M$, that is, $(M,D')\cong(\Phi^*M',\Phi^*D')$ (\cite[Theorem~10.4.2]{pde}). We may identify $(\Phi_*M,\Phi_*D')$ with $(\oplus_{i=0}^{p-1}\psi_i(M'),\Phi_*D')$. By the proof of \cite[Theorem~10.5.1]{pde}, $(\psi_0(M'),\Phi_*D')$ has pure radii $t^p$ and $(\psi_i(M'),\Phi_*D')$ for $1\le i\le p-1$ has pure radii $\omega^p$. Hence, we have
\[
(M_{>s},(\Phi_*M)_{>\phi(s)})=
\begin{cases}
(M,\psi_0(M'))&\text{the case a-2 or b-3},\\
(0,0)&\text{the case b-2}.
\end{cases}
\]
In these cases, we obtain the assertion by $\sum_{i=0}^{p-1}\psi_i\circ\psi_{0}(M')=\sum_{i=0}^{p-1}\psi_{i}(M')=M$.
\end{proof}
%%%%%%%%%%%%%%%
%%%%%%%%%%%%%%%
%%%%%%%%%%%%%%%
\subsection*{Hensel's lemma for twisted polynomials}
In this subsection, let notation be as follows. Let $F$ be a commutative ring with multiplicative norms $\{|\cdot|_{\alpha}\}_{\alpha\in I}$. We set $v^{\alpha}(\cdot)=-\log{|\cdot|_{\alpha}}$. A sequence $\{a_n\}_{n\in\mathbb{N}}$ of $F$ is {\it Cauchy} if it is a Cauchy sequence with respect to $|\cdot|_{\alpha}$ for all $\alpha\in I$. A sequence $\{a_n\}_{n\in\mathbb{N}}$ of $F$ is {\it converging} (to $a\in F$) if $\{a_n\}_{n\in\mathbb{N}}$ converges to $a$ with respect to $|\cdot|_{\alpha}$ for all $\alpha\in I$. We assume that $F$ is {\it Fr\'echet complete}, i.e., any Cauchy sequence of $F$ is converging.

Let $d:F\to F$ be a continuous derivation, i.e., $d$ is a derivation such that for any Cauchy sequence $\{a_n\}_{n\in\mathbb{N}}$, $\{da_n\}_{n\in\mathbb{N}}$ is Cauchy. Let $F\{T\}$ be the ring of twisted polynomials over $F$ (\cite[\S~5.5]{pde}). For $f=R_0+R_1T+\dots\in F\{T\}$, $R_i\in F$, we define Newton polygon of $f$ with respect to $v^{\alpha}$ as the boundary of the lower convex hull of the points $(-i,v^{\alpha}(R_i))$, $i\in\mathbb{N}$ (\cite[Definition~2.1.3]{pde}).

For $r\in\mathbb{R}$, we define 
\[
v^{\alpha}_r\left(\sum_ka_kT^k\right)=\inf_k\{v^{\alpha}(a_k)+kr\}.
\]
Let $r^{\alpha}_0=\inf_{x\in F,x\neq 0}{\{v^{\alpha}(d(x))-v^{\alpha}(x)\}}$.

\begin{ex}
Let notation be as in Example~\ref{ex:Frob}~(i). Then, the ring $K\langle\alpha/t,t]\!]_0$ is Fr\'echet complete with respect to Gaussian norms $\{|\cdot|_{\rho}\}_{\rho\in [\alpha,1]}$ (\cite[Proposition~8.2.5]{pde}). Moreover, $r^{\rho}_0=\log{\rho}$ (Definition~9.4.1 loc. cit.).
\end{ex}

The following is a slight generalization of Robba's analogue for differential operators of Hensel's lemma for a twisted polynomial over a complete non-archimedean valuation field (\cite[Th\'eor\`em~2.4]{Rob}). Fortunately, the original proof works in our situation.

\begin{prop}[cf. {\cite[Proposition~3.2.2]{ss}, \cite[Theorem~2.2.1]{pde}}]\label{prop:Hensel}
Let $r\in\mathbb{R}$ such that $r<r^{\alpha}_0$ for all $\alpha\in I$. Let
\[
R=R_0+R_1T+\dots+R_iT^i+\dots\in F\{T\}
\]
such that
\begin{gather*}
R_i\in F^{\times},\\
v^{\alpha}_r(R-R_iT^i)>v^{\alpha}_r(R_iT^i)\ \forall\alpha\in I.
\end{gather*}
Then, $R$ can be factored uniquely as $PQ$ (resp. $Q'P'$) where $P\in F\{T\}$ (resp. $P'\in F\{T\}$) has degree $\deg{(R)}-i$ and has all slopes with respect to $v^{\alpha}$ strictly less than $r$, $Q\in F\{T\}$ (resp. $Q'\in F\{T\}$) is degree $i$ whose leading term is $R_i$ and has all slopes with respect to $v^{\alpha}$ strictly greater than $r$, $v^{\alpha}_r(P-1)>0$, and $v_r^{\alpha}(Q-R_iT^i)>v_r^{\alpha}(R_iT^i)$ (resp. $v^{\alpha}_r(P'-1)>0$, and $v_r^{\alpha}(Q'-R_iT^i)>v_r^{\alpha}(R_iT^i)$).
\end{prop}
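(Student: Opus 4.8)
The statement to prove is a Hensel-type factorization (Proposition~\ref{prop:Hensel}) for twisted polynomials over a Fréchet-complete differential ring $F$ carrying a family of norms $\{|\cdot|_\alpha\}_{\alpha\in I}$, with $r$ strictly below all the $r_0^\alpha$. Here is the plan.

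\medskip

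\noindent\textbf{Setup and strategy.} The plan is to follow Robba's original argument (\cite[Th\'eor\`eme~2.4]{Rob}, see also \cite[Theorem~2.2.1]{pde}) and produce $P$, $Q$ by successive approximation, checking at every stage that the estimates are uniform in $\alpha\in I$ so that the limit exists by Fréchet completeness. First I would record the basic calculus of the gauge $v^\alpha_r$ on $F\{T\}$: it is additive, $v^\alpha_r(fg)\ge v^\alpha_r(f)+v^\alpha_r(g)$ with equality when the ``leading $r$-terms'' multiply without cancellation, and — this is where the hypothesis $r<r_0^\alpha$ enters — multiplication by $T$ on the \emph{left} or \emph{right} interacts with the twisting (the relation $Tf = fT + d(f)$, or its analogue for the chosen sign convention) in a way that does not decrease $v^\alpha_r$: precisely, $v^\alpha_r(d(f))\ge v^\alpha(f\text{'s coefficients})+\cdots\ge v^\alpha_r(f)+r_0^\alpha > v^\alpha_r(f)+r$, so the ``error terms'' created by moving $T$ past scalars are strictly negligible at slope $r$. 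This single inequality is what makes the twisted case behave like the commutative one.

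\medskip

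\noindent\textbf{The iteration.} Normalize so that $v^\alpha_r(R_i T^i)=0$ for all $\alpha$ (divide by $R_i$; it is a unit), so that $v^\alpha_r(R - R_iT^i)>0$. Seek $P = 1 + (\text{lower-order corrections})$ with $v^\alpha_r(P-1)>0$ and $Q = R_iT^i + (\text{corrections of }v^\alpha_r>0)$, $Q$ of degree $i$ with all slopes $>r$, $P$ of degree $\deg R - i$ with all slopes $<r$. Start from $P^{(0)}=1$, $Q^{(0)} = R_iT^i + (R - R_iT^i)_{<i}$ or a similar naive splitting of $R$ at degree $i$; set $E^{(0)} = R - P^{(0)}Q^{(0)}$, which has $v^\alpha_r(E^{(0)})>0$ uniformly. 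Inductively, given $P^{(n)}Q^{(n)} = R - E^{(n)}$ with $\epsilon_n := \inf_\alpha v^\alpha_r(E^{(n)})>0$, solve the linearized equation $P^{(n)}\,\delta Q + \delta P\,Q^{(n)} \equiv E^{(n)}$ by splitting $E^{(n)}$ into its part of degree $\ge i$ (absorbed into $\delta Q$, after dividing by the unit leading coefficient) and its part of degree $<i$ (absorbed into $\delta P$), using that $P^{(n)}$ and $Q^{(n)}$ are close to $1$ and $R_iT^i$ respectively so the corrections are controlled. Then $E^{(n+1)} = -\delta P\,\delta Q + (\text{twisting error terms})$, and both contributions have $v^\alpha_r \ge 2\epsilon_n$ minus a harmless amount, or at least $\ge \epsilon_n + c$ for a fixed $c>0$; hence $\epsilon_n \to \infty$, the sequences $\{P^{(n)}\}$, $\{Q^{(n)}\}$ are Cauchy with respect to every $|\cdot|_\alpha$, and their limits $P$, Q satisfy $PQ = R$ with the asserted degree, slope, and $v^\alpha_r$-proximity properties. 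The slope statements follow from the Newton-polygon interpretation of $v^\alpha_r$ together with $v^\alpha_r(P-1)>0$ and $v^\alpha_r(Q - R_iT^i)>0$. The factorization $R = Q'P'$ is obtained symmetrically (or by applying the $PQ$ case to the opposite ring), and uniqueness is the standard argument: two factorizations $PQ = P_1Q_1$ give $P_1^{-1}P = Q_1Q^{-1}$, an element that is simultaneously of the shape $1 + (v^\alpha_r>0)$ and $R_iT^i R_i^{-1}T^{-i}+\cdots$, forcing it to be $1$.

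\medskip

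\noindent\textbf{Main obstacle.} The delicate point — and the only place the statement is genuinely stronger than the classical complete-valued-field version — is the \emph{uniformity over} $I$: every estimate in the iteration must be controlled by a single $\epsilon_n>0$ independent of $\alpha$, and the division steps (dividing the top part of $E^{(n)}$ by the leading coefficient of $Q^{(n)}$, and inverting $P^{(n)}$) must not destroy this, which they do not because $R_i\in F^\times$ is a fixed unit and $P^{(n)}\to 1$, $Q^{(n)}$'s leading term $\to R_i$ uniformly. One must also be careful that the continuity of $d$ gives uniform control of the twisting errors: $d$ continuous means $\{a_n\}$ Cauchy $\Rightarrow\{da_n\}$ Cauchy, but for the quantitative step I would isolate from the hypothesis $r < r_0^\alpha$ for \emph{all} $\alpha$ that, up to shrinking, there is a uniform gap $r_0^\alpha - r \ge c > 0$ on the (finitely many, or otherwise controlled) norms actually entering a given coefficient — in the intended applications $I$ is a closed interval $[\alpha,1]$ and $r_0^\rho = \log\rho$, so $r < \log\alpha \le r_0^\rho$ gives the uniform gap $c = \log\alpha - r$ automatically. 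Granting this uniform gap, the rest is the routine bookkeeping of Robba's proof, which, as the text remarks, goes through verbatim.
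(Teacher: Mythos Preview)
Your existence sketch is essentially the paper's argument: it sets $P_0=1$, $Q_0=R_iT^i$, defines $X_l,Y_l$ by splitting $R-P_lQ_l$ at degree $i$, and tracks $c_l^\alpha=v_r^\alpha(R-P_lQ_l)-ri-v^\alpha(R_i)$, obtaining $c_{l+1}^\alpha\ge c_l^\alpha+c_0^\alpha$ via the identity
\[
R-P_{l+1}Q_{l+1}=X_l(R_iT^i-Q_l)+(1-P_l)Y_l-X_lY_l
\]
and the \emph{multiplicativity} $v_r^\alpha(fg)=v_r^\alpha(f)+v_r^\alpha(g)$ (not just an inequality) granted by $r<r_0^\alpha$. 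Two points deserve correction.

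First, your ``main obstacle'' is a phantom. No uniform lower bound $\inf_\alpha c_0^\alpha>0$ is needed, nor any uniform gap $r_0^\alpha-r\ge c$. Fr\'echet completeness says a sequence converges iff it is Cauchy for \emph{each} $|\cdot|_\alpha$ separately; since $c_l^\alpha\ge (l+1)c_0^\alpha$ with $c_0^\alpha>0$ for each fixed $\alpha$, coefficientwise Cauchy-ness follows $\alpha$ by $\alpha$. The paper simply carries the superscript $\alpha$ through every estimate and never takes an infimum over $I$.

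Second, your uniqueness argument does not work as written: $P_1^{-1}$ and $Q^{-1}$ do not lie in $F\{T\}$, and passing to a completed twisted ring would require separate justification. The paper's argument is different and cleaner: set $X=R-P\widetilde{Q}$ and suppose $X\neq 0$; then for $r'$ in a left neighbourhood of $r$ one has simultaneously $v_{r'}^\alpha(X)=v_{r'}^\alpha(\widetilde{Q}-Q)$ (so the left derivative of $r'\mapsto v_{r'}^\alpha(X)$ at $r$ is $\le i-1$) and $v_{r'}^\alpha(X)=v_{r'}^\alpha(P-\widetilde{P})+r'i+v^\alpha(R_i)$ (so the left derivative is $\ge i$), a contradiction. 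This avoids inverses entirely.
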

\begin{proof}
Since there exists a canonical isomorphism $F^{ad}\{T\}\cong F\{T\}^{op}$, where $F^{ad}$ is the ring $F$ with the derivation $-d$, we have only to prove the existence and uniqueness for the first decomposition $R=PQ$ (\cite[Remark~3.1.3]{ss}).

We first check the existence. Define sequences $\{P_l\}$, $\{Q_l\}$ as follows. Define $P_0=1$ and $Q_0=R_{i}T^i$. Given $P_l$ and $Q_l$, write
\[
R-P_lQ_l=\sum_ka_kT^k,a_k\in F,
\]
then put
\[
X_l=\sum_{k\ge i}a_kT^{k-i}R_i^{-1},\ Y_l=\sum_{k<i}a_kT^k
\]
and set $P_{l+1}=P_l+X_l$, $Q_{l+1}=Q_l+Y_l$. Put $c_l^{\alpha}=v_r^{\alpha}(R-P_lQ_l)-ri-v^{\alpha}(R_i)$, so that $c_0^{\alpha}>0$. Suppose that
\begin{gather}
v_r^{\alpha}(P_l-1)\ge c_0^{\alpha},\label{1}\\
v_r^{\alpha}(Q_l-R_iT^i)\ge c_0^{\alpha}+ri+v^{\alpha}(R_i),\label{2}\\
c_l^{\alpha}\ge c_0^{\alpha}.\label{3}
\end{gather}
The assumption $r<r_0^{\alpha}$ implies that $v_r^{\alpha}(fg)=v_r^{\alpha}(f)+v_r^{\alpha}(g)$ for $f$, $g\in F\{T\}$ by \cite[Lemma~3.1.5]{ss}. Hence, we have
\[
v_r^{\alpha}(R-P_lQ_l)=\inf{\{v_r^{\alpha}(X_lR_iT^i),v_r^{\alpha}(Y_l)\}}=\inf{\{v_r^{\alpha}(X_l)+ri+v^{\alpha}(R_i),v_r^{\alpha}(Y_l)\}},
\]
in particular,
\begin{gather}
v_r^{\alpha}(X_l)\ge v_r^{\alpha}(R-P_lQ_l)-ri-v^{\alpha}(R_i)=c_l^{\alpha},\label{4}\\
v_r^{\alpha}(Y_l)\ge v_r^{\alpha}(R-P_lQ_l)=c_l^{\alpha}+ri+v^{\alpha}(R_i).\label{5}
\end{gather}
We will check (\ref{1}), (\ref{2}), and (\ref{3}) for $l+1$. 

We have, by (\ref{1}), (\ref{3}), and (\ref{5}),
\[
v_r^{\alpha}(P_{l+1}-1)\ge \inf{\{v_r^{\alpha}(P_l-1),v_r^{\alpha}(X_l)\}}\ge\inf{\{c_0^{\alpha},c_l^{\alpha}\}}=c_0^{\alpha},
\]
by (\ref{2}), (\ref{3}), and (\ref{5}),
\begin{align*}
&v_r^{\alpha}(Q_{l+1}-R_iT^i)\ge\inf{\{v_r^{\alpha}(Q_l-R_iT^i),v_r^{\alpha}(Y_l)\}}\\
\ge&\inf{\{c_0^{\alpha}+ri+v^{\alpha}(R_i),c_l^{\alpha}+ri+v^{\alpha}(R_i)\}}=c_0^{\alpha}+ri+v^{\alpha}(R_i).
\end{align*}
Since
\[
R-P_{l+1}Q_{l+1}=X_l(R_iT^i-Q_l)+(1-P_l)Y_l-X_lY_l,
\]
we have, by (\ref{1}), (\ref{2}), (\ref{4}), and (\ref{5}),
\[
c_{l+1}^{\alpha}\ge\inf{\{c_l^{\alpha}+c_0^{\alpha}+ri+v^{\alpha}(R_i),c_0^{\alpha}+c_l^{\alpha}+ri+v^{\alpha}(R_i)\}}-ri-v^{\alpha}(R_i)=c_l^{\alpha}+c_0^{\alpha}.
\]
By induction on $l$, we deduce that $c_l^{\alpha}\ge (l+1)c_0^{\alpha}$. Moreover, each $P_l$ has degree at most $\deg{(R)}-i$, and each $Q_l-R_iT^i$ has degree at most $i-1$. For each $j\in\{0,\dots,\deg{(R)}-i\}$, let $P_{l,j}\in F$ denote the coefficient of $T^j$ in $P_l$. Then, the sequence $\{P_{l,j}\}_{l\in\mathbb{N}}$ is Cauchy, hence, converges to some element $P_j\in F$. Thus, we obtain the twisted polynomial $P=\sum_{0\le j\le\deg{(R)}-i}P_{j}T^j\in F\{T\}$. Similarly, by starting with the sequence $\{Q_l\}$, we obtain a twisted polynomial $Q\in F\{T\}$ of degree at most $i$. By construction, $P$ and $Q$ satisfy the desired properties.

We next check the uniqueness. Let $R=\widetilde{P}\widetilde{Q}$ be another factorization satisfying the condition. Let $X=R-P\widetilde{Q}$ and suppose that $X\neq 0$. Fix some $\alpha\in I$ and write $v_r$ for $v_r^{\alpha}$. For any $f\in F\{T\}$, the map $s\mapsto v_s(f)$ is piecewise affine with slopes non-negative integers. Hence, we may assume that
\[
v_{r'}(Q-R_iT^i)>v_{r'}(R_iT^i),\ v_{r'}(\widetilde{P}-1)>0
\]
for $r'$ in some left neighborhood of $r$. Then, for any $r'\le r$ sufficiently close to $r$, we have
\begin{equation}\label{eq:un1}
v_{r'}(X)=v_{r'}(\widetilde{Q}-Q+(\widetilde{P}-1)(\widetilde{Q}-Q))=v_{r'}(\widetilde{Q}-Q)
\end{equation}
and
\begin{equation}\label{eq:un2}
v_{r'}(X)=v_{r'}((P-\widetilde{P})R_iT^i+(P-\widetilde{P})(Q-R_iT^i))=v_{r'}((P-\widetilde{P})R_iT^i)=v_{r'}(P-\widetilde{P})+r'i+v(R_i).
\end{equation}
The left derivative of the function $r'\mapsto v_{r'}(X)$ at $r'=r$  is less than or equal to $i-1$ by (\ref{eq:un1}), and is greater than or equal to $i$ by (\ref{eq:un2}), which is a contradiction.
\end{proof}
%%%%%%%%%%%%%%%
%%%%%%%%%%%%%%%
%%%%%%%%%%%%%%%
\subsection*{Proof of Key Lemma}
We first prove in the case $f_i(M,0)>-\log{\omega}$. We may freely replace $\alpha$ by $\alpha'\in [\alpha,1)$. Since $\mathcal{E}^{\dagger}=\cup_{\alpha\in (0,1)}K\langle\alpha/t,t]\!]_0$ is a field by \cite[Definition~15.1.2]{pde} (recall that $K$ is discretely valued), $M\otimes_{K\langle\alpha/t,t]\!]_0}\mathcal{E}^{\dagger}$ admits a cyclic vector (\cite[Theorem~5.4.2]{pde}). Hence, we may assume that $M$ admits a cyclic vector, in particular, there exists an isomorphism
\[
M\cong K\langle\alpha/t,t]\!]_0\{T\}/K\langle\alpha/t,t]\!]_0\{T\}R
\]
for some twisted polynomial $R\in K\langle\alpha/t,t]\!]_0\{T\}$ of degree $m$.  We may also assume that each non-zero coefficient of $R$ is invertible in $K\langle\alpha/t,t]\!]_0$. We apply Proposition~\ref{prop:Hensel} to $R$ with $F=K\langle\alpha/t,t]\!]_0$, then we construct $M''$ by using the resulting decomposition of $R$. Let $\mathfrak{IR}(M,0)=\{s_1\le\dots\le s_m\}$. The assumptions $f_i(M,0)>-\log{\omega}$ and $f_i(M,0)>f_{i+1}(M,0)$ imply that $s_i<\omega$ and $s_i<s_{i+1}$ respectively. For a while, we regard $R$ as a twisted polynomial over $\mathcal{E}$. Recall that Newton polygon of $R$ in the sense of \cite[Definition 6.4.3]{pde} is obtained from Newton polygon of $R$ in the sense of this paper by omitting all slopes greater than or equal to $0$. Hence, by \cite[Corollary~6.5.4]{pde}, the multiset $\{\log{(s_j/\omega)};s_j<\omega\}$ coincides with the one obtained from the slope multiset of Newton polygon of $R$ by omitting all elements greater than or equal to $0$. Hence, Newton polygon of $R$ has a vertex whose $x$-coordinate is equal to $-m+i$. In particular, we have $R_{m-i}\neq 0$, hence, $R_{m-i}\in (K\langle\alpha/t,t]\!]_0)^{\times}$ by assumption. Moreover, if $r$ is any real number satisfying $-\log{(\omega/s_i)}<r<-\log{(\omega/s_{i+1})}$, then $v_r^1(R-R_{m-i}T^{m-i})>v_r^1(R_{m-i}T^{m-i})$, where $v^{\rho}(\cdot)=-\log{|\cdot|_{\rho}}$, with the notation as in the previous subsection. We fix $r$ satisfying
\[
-\log{(\omega/s_i)}<r<\inf{\{\log{\alpha},-\log{(\omega/s_{i+1})}\}}
\]
by choosing $\alpha$ sufficiently close to $1$ if necessary (note that $-\log{(\omega/s_i)}<0$ by assumption). By the continuity of $\rho\mapsto v^{\rho}(\cdot)$, we may assume that $v^{\rho}_r(R-R_{m-i}T^{m-i})>v_r^{\rho}(R_{m-i}T^{m-i})$ for $\rho\in [\alpha,1]$. By applying Proposition~\ref{prop:Hensel} to $R$, we obtain a decomposition $R=Q'P'$, where $P'$ (resp. $Q'$) as a twisted polynomial over $\mathcal{E}$ is of degree $i$ (resp. $m-i$) with slopes strictly less than $r$ (resp. strictly greater than $r$). Let $M''=K\langle\alpha/t,t]\!]_0\{T\}P'/K\langle\alpha/t,t]\!]_0\{T\}R$ be the differential submodule of $M$ of rank $m-i$. Then,
\[
M''\cong K\langle\alpha/t,t]\!]_0\{T\}/K\langle\alpha/t,t]\!]_0\{T\}Q',M/M''\cong K\langle\alpha/t,t]\!]_0\{T\}/K\langle\alpha/t,t]\!]_0\{T\}P'.
\]
By \cite[Theorem~6.5.3]{pde} again, the intrinsic generic subsidiary radii of $M''\otimes_{K\langle\alpha/t,t]\!]_0}\mathcal{E}$ (resp.\\ $M/M''\otimes_{K\langle\alpha/t,t]\!]_0}\mathcal{E}$) are strictly less than $\omega e^r$ (resp. strictly greater than $\omega e^r$). Therefore, we have $f_{i+j}(M,0)=f_j(M'',0)$ for $j=1,\dots,m-i$.

We next prove in the case $f_i(M,0)>-p^{-j}\log{\omega}$ for $j\in\mathbb{N}$ by induction on $j$. The case $j=0$ is already proved. We assume the case $j-1$. We use the notation in Example~\ref{ex:Frob}. Let $D':=(pt^{p-1})^{-1}D$ so that $(M,D')\in \mathrm{Mod}(K\langle\alpha/t,t]\!]_0,d')$. Since $\phi(\mathfrak{IR}(H^*M,H^*D))=\mathfrak{IR}(\Phi_*(H^*M,H^*D'))=\mathfrak{IR}(h^*(\varphi_*M,\varphi_*D'))$ by Lemma~\ref{lem:IR}~(i), there exists $i'\in\mathbb{N}$ such that 
\begin{gather}
f_{i'}(\varphi_*M,0)>f_{i'+1}(\varphi_*M,0),\label{6}\\
\phi(\exp{(-f_i(M,0))})=\exp{(-f_{i'}(\varphi_*M,0))}.\label{7}
\end{gather}
Indeed, let $\mathfrak{IR}(H^*M,H^*D)=\{s_1,\dots,s_m\}$, $\mathfrak{IR}(h^*\varphi_*M,h^*\varphi_*D')=\{s'_1,\dots,s'_{pm}\}$. Then, $i'$ is determined by the conditions $s'_i<s'_{i+1}$ and $\phi(s_i)=s'_{i'}$. Let $s:=\exp{(-f_i(M,0))}$ and $s':=\phi(s)=\exp{(-f_{i'}(\varphi_*M,0))}$. Then, (\ref{6}) implies that the number of elements of $\mathfrak{IR}(h^*\varphi_*M,h^*\varphi_*D')$ strictly greater than $s'$ are $pm-i'$, and (\ref{7}) implies that $f_{i'}(\varphi_*M,0)\ge pf_i(M,0)>-p^{-j+1}\log{\omega}$. Applying the induction hypothesis to $(\varphi_*M,\varphi_*D')\in \mathrm{Mod}(K\langle\alpha^p/t^p,t^p]\!]_0,d')$, we obtain a subobject  $(M',\varphi_*D')\subset (\varphi_*M,\varphi_*D')$ such that
\[
f_{i'+j}(\varphi_*M,0)=f_j(M',0)\text{ for }j=1,\dots,pm-i'.
\]
This implies that $(h^*M',h^*\varphi_*D')=((h^*\varphi_*M)_{>\phi(s)},h^*\varphi_*D')=((\Phi_*H^*M)_{>\phi(s)},\Phi_*H^*D')$. Let\\ $\mathcal{G}_{\varphi}(M',\varphi_*D')\subset (\varphi_*M,\varphi_*D')$ be the subobject defined in Definition~\ref{lem:G} and $(M'',D')\subset (M,D')$ the unique subobject such that $(\varphi_*M'',\varphi_*D')=\mathcal{G}_{\varphi}(M',\varphi_*D')$. By pulling-back by $h$, we have
\[
h^*(\varphi_*M'',\varphi_*D')\cong \Phi_*(H^*M'',H^*D'),
\]
\begin{align*}
h^*\mathcal{G}_{\varphi}(M',\varphi_*D')&\cong\mathcal{G}_{\Phi}(h^*M',h^*\varphi_*D')=\mathcal{G}_{\Phi}((h^*\varphi_*M)_{>\phi(s)},h^*\varphi_*D')\\
&=\mathcal{G}_{\Phi}((\Phi_*H^*M)_{>\phi(s)},\Phi_*H^*D')=\Phi_*((H^*M)_{>s},H^*D'),
\end{align*}
where the first isomorphism follows from Lemma~\ref{lem:G-pull} and the last equality follows from Lemma~\ref{lem:push}. By Lemma~\ref{lem:IR}~(ii), we have $\mathfrak{IR}(H^*M'',H^*D)=\mathfrak{IR}((H^*M)_{>s},H^*D)$ (note that $H^*(D')=(H^*D)'$). Since the cardinality of $\mathfrak{IR}((H^*M)_{>s},H^*D)$ is $m-i$ by assumption, $M''$ is of rank $m-i$ and
\[
f_{i+j}(M,0)=f_j(M'',0)\text{ for }j=1,\dots,m-i,
\]
which implies the assertion.
%%%%%%%%%%%%%
%%%%%%%%%%%%%
%%%%%%%%%%%%%
\section{Appendix 2: An example of rank two}
Throughout this section, assume that $p\neq 2$ and $K$ is discretely valued. In this section, we construct a differential module $M$ over $K[\![t]\!]_0$ of rank two, which corresponds to the ordinary differential equation $d^2f/dt^2-tdf/dt=0$, then we explain Main Theorem and Theorem \ref{thm:dec} by using $M$. Precisely speaking, we will explicitly describe the decomposition of $M\otimes_{K[\![t]\!]_0}K\langle\alpha/t,t]\!]_0$ for some $\alpha\in (0,1)$ given by Theorem \ref{thm:dec}. We also prove that such a decomposition does not extends to $M$. Secondly, we will prove without assuming Main Theorem that $\dim_K(M\otimes_{K[\![t]\!]_0}K\{t\})^{\nabla=0}=1$ and $M^{\nabla=0}=(M\otimes_{K[\![t]\!]_0}K\{t\})^{\nabla=0}$, while we have $(M\spcheck\otimes_{K[\![t]\!]_0}K\{t\})^{\nabla=0}=0$. We first define $M$, then prove the above results by assuming some calculations verified in the last subsection.

\begin{notation}
In addition to the notation in Appendix~1, we use the following notation.
\begin{enumerate}
\item[(1)] For $x\in\mathbb{R}$, denote by $\lfloor x\rfloor$ the maximum integer less than or equal to $x$. For $n\in\mathbb{N}_{\ge 1}$, we put $n!!=\prod_{i=0,\dots,\lfloor n/2\rfloor}(2i+1)$ if $n$ is odd, and $n!!=\prod_{i=1,\dots,\lfloor n/2\rfloor}2i$ if $n$ is even. For simplicity, we put $0!!=1$ and $(-1)!!=-1$. Note that
\[
(2n-1)!!=(2n-1)!!(2n)!!/(2n)!!=(2n)!/2^nn!=\binom{2n}{n}n!/2^n
\]
for $n\in\mathbb{N}_{\ge 1}$.
\item[(2)] For a formal power series $f=\sum_{i\in\mathbb{N}}a_it^i\in K[\![t]\!]$ with $a_i\in K$, we define the {\it radius of convergence} $R(f)\in\mathbb{R}\cup\{\infty\}$ of $f$ as $\sup\{\rho\in\mathbb{R}_{\ge 0};|a_i|\rho^i\to 0\ (i\to+\infty)\}$. By definition, $f\in K\{t\}$ if and only if $R(f)\ge 1$.
\item[(3)] For a formal sum $f=\sum_{i\in\mathbb{Z}}a_it^i$ with $a_i\in K$, we put $f'=\sum_{i\in\mathbb{Z}}ia_it^{i-1}$.
\item[(4)] Let $\lambda\in K[\![t]\!]_0$. We define the rank one differential module $V_\lambda=K[\![t]\!]_0\mathbf{e}_{\lambda}$ over $K[\![t]\!]_0$ by $D(\mathbf{e}_{\lambda})=\lambda\mathbf{e}_{\lambda}$.
\item[(5)] For an abelian group $M$ with a quotient $Q$, we denote the image of $m\in M$ in $Q$ by $m$ again if no confusion arises.
\end{enumerate}
\end{notation}

\subsection*{Definition}

Let $M=K[\![t]\!]_0e_1\oplus K[\![t]\!]_0e_2$ be the differential module over $K[\![t]\!]_0$ defined by
\[
D(e_1)=e_2,\ D(e_2)=-e_1-te_2.
\]
Then, $e_1$ is a cyclic vector of $M$ (\cite[Definition 5.4.1]{pde}), and we have the isomorphism of differential modules over $K[\![t]\!]_0$
\begin{equation}\label{eq:isom}
M\cong K[\![t]\!]_0\{T\}/K[\![t]\!]_0\{T\}(T^2+tT+1);(e_1,D(e_1))\mapsto (1,T).
\end{equation}
Note that we have $T^2+tT+1=T\cdot (T+t)$ in $K[\![t]\!]_0\{T\}$. Hence, $N=K[\![t]\!]_0(te_1+e_2)$ is a differential submodule of $M$, which is isomorphic to $V_0$. Moreover, $M/N$ is isomorphic to $V_{-t}$. Thus, we obtain an exact sequence of differential modules over $K[\![t]\!]_0$
\begin{equation}\label{eq:exact}
0\to V_0\to M\to V_{-t}\to 0.
\end{equation}

We will describe $M\spcheck$. Let $\{e_1\spcheck,e_2\spcheck\}\subset M\spcheck$ be the dual basis of $\{e_1,e_2\}\subset M$. Then, we have $D(e_1\spcheck)=e_2\spcheck$, $D(e_2\spcheck)=-e_1\spcheck+te_2\spcheck$ by definition, and $D^2(e_2\spcheck)=tD(e_2\spcheck)$. Hence, $e_2\spcheck$ is a cyclic vector of $M\spcheck$, and we have the isomorphism of differential modules over $K[\![t]\!]_0$
\begin{equation}\label{eq:isom dual}
M\spcheck\cong K[\![t]\!]_0\{T\}/K[\![t]\!]_0\{T\}(T^2-tT);(e_2\spcheck,D(e_2\spcheck))\mapsto (1,T).
\end{equation}
Thus, we may regard $M$ as the differential module corresponding to the differential equation $d^2f/dt^2-tdf/dt=0$ (see \cite[\S 5.6]{pde}).

\subsection*{Decomposition}

We have $\mathfrak{IR}(M\otimes_{K[\![t]\!]_0}\mathcal{E})=\{IR(V_{-t}\otimes_{K[\![t]\!]_0}\mathcal{E}),IR(V_0\otimes_{K[\![t]\!]_0}\mathcal{E})\}=\{\omega,1\}$ by (\ref{eq:exact}) and Example~\ref{ex:rank one}. Hence, $f_1(M,0)=-\log{\omega}>f_2(M,0)=0$. Therefore, Theorem \ref{thm:dec} is applicable to $M\otimes_{K[\![t]\!]_0}K\langle\alpha/t,t]\!]_0$ for any $\alpha\in (0,1)$. We will describe the resulting decomposition explicitly.

We identify $M$ as $K[\![t]\!]_0\{T\}/K[\![t]\!]_0\{T\}(T^2+tT+1)$ via the isomorphism (\ref{eq:isom}). Fix $\alpha\in (\omega^{1/2},1)$. We assume Lemma~\ref{lem:overconvergent} below: let $a=\sum_{i=0}^{\infty}(2i-1)!!/t^{2i+1}\in K\langle\alpha/t,t]\!]_0$. Then, $\{T+t,aT-a'\}$ is a basis of $M\otimes_{K[\![t]\!]_0}K\langle\alpha/t,t]\!]_0$ since we have
\[
\det
\begin{pmatrix}
t&-a'\\
1&a
\end{pmatrix}
=at+a'=-1.
\]
Moreover, we have
\[
D(T+t,aT-a')=(T+t,aT-a')
\begin{pmatrix}
0&0\\
0&-t
\end{pmatrix}
\]
in $M\otimes_{K[\![t]\!]_0}K\langle\alpha/t,t]\!]_0$. Hence, we obtain the decomposition of differential modules over $K\langle\alpha/t,t]\!]_0$
\[
M\otimes_{K[\![t]\!]_0}K\langle\alpha/t,t]\!]_0=K\langle\alpha/t,t]\!]_0(T+a)\oplus K\langle\alpha/t,t]\!]_0 (aT-a')
\]
with isomorphisms $K\langle\alpha/t,t]\!]_0(T+a)\cong V_0\otimes_{K[\![t]\!]_0}K\langle\alpha/t,t]\!]_0$ and $K\langle\alpha/t,t]\!]_0 (aT-a')\cong V_{-t}\otimes_{K[\![t]\!]_0}K\langle\alpha/t,t]\!]_0$. To conclude the above decomposition satisfies the condition in Theorem \ref{thm:dec}, it suffices to check $IR(V_{-t}\otimes_{K[\![t]\!]_0}F_{\rho})<IR(V_0\otimes_{K[\![t]\!]_0}F_{\rho})$ for any $\rho\in (\omega^{1/2},1)$. Since $V_0$ is trivial, we have $IR(V_0\otimes_{K[\![t]\!]_0}F_{\rho})=1$ by definition. We have $(V_{-t}\otimes_{K[\![t]\!]_0}K[\![t]\!])^{\nabla=0}=K(\mathbf{e}_{-t}\otimes\exp{(t^2/2)})$, and $R(\exp{(t^2/2)})=\omega^{1/2}$ by a similar proof as in Lemma~\ref{lem:power series}~(i). Hence, the {\it radius of convergence} $R(V_{-t})$ of $V_{-t}$ in the sense of \cite[Definition~9.3.1]{pde} is equal to $\omega^{1/2}$. We define the {\it generic radius of convergence} of $V_{-t}\otimes_{K[\![t]\!]_0}F_{\rho}$ as $R(V_{-t}\otimes_{K[\![t]\!]_0}F_{\rho})=\rho^{-1}\cdot IR(V_{-t}\otimes_{K[\![t]\!]_0}F_{\rho})$ (\cite[Definitions~9.4.4, 9.4.7]{pde}). Then, we have $R(V_{-t}\otimes_{K[\![t]\!]_0}F_{\rho})\le R(V_{-t})$ by Dwork's transfer theorem (\cite[Theorem~9.6.1]{pde}). Hence, we have $IR(V_{-t}\otimes_{K[\![t]\!]_0}F_{\rho})\le\rho\cdot R(V_{-t})=\rho\cdot\omega^{1/2}<1$, which implies the assertion.

We finally prove that $M$ is indecomposable in the category of differential modules over $K[\![t]\!]_0$. In particular, $M$ is not isomorphic to $V_0\oplus V_{-t}$. Suppose not, that is, there exists an isomorphism of differential modules over $K[\![t]\!]_0$
\[
M\cong M_1\oplus M_2
\]
such that $M_1,M_2$ are of rank one. By considering Jordan-H\"older constituents of $M$ in the category of differential modules over $K[\![t]\!]_0$, either $M_1$ or $M_2$ is isomorphic to $V_{-t}$. In particular, there exists a non-zero element $v\in M$ such that $D(v)=-tv$. Write $v=g+h T$ with $g,h\in K[\![t]\!]_0$. Then, we have $g'-h=-tg$ and $g+h'-th=-th$. Hence, $g=-h'$ and $h''+th'+h=0$. Since $R(h)\ge 1$ by $h\in K[\![t]\!]_0$, we have $h=0$ by Lemma~\ref{lem:power series}~(iii). Hence, $v=0$, which is a contradiction.

\subsection*{Horizontal sections}

We will prove $(M\otimes_{K[\![t]\!]_0}K\{t\})^{\nabla=0}=M^{\nabla=0}=K(te_1+e_2)$. By the exact sequence (\ref{eq:exact}), we obtain an exact sequence
\[
0\to (V_0\otimes_{K[\![t]\!]_0}K\{t\})^{\nabla=0}\to (M\otimes_{K[\![t]\!]_0}K\{t\})^{\nabla=0}\to (V_{-t}\otimes_{K[\![t]\!]_0}K\{t\})^{\nabla=0}.
\]
Since $(V_{-t}\otimes_{K[\![t]\!]_0}K[\![t]\!])^{\nabla=0}=K(\mathbf{e}_{-t}\otimes\exp{(t^2/2)})$ and $R(\exp{(t^2/2)})=\omega^{1/2}<1$, we have $(V_{-t}\otimes_{K[\![t]\!]_0}K\{t\})^{\nabla=0}=0$. Hence, we obtain a canonical isomorphism $(M\otimes_{K[\![t]\!]_0}K\{t\})^{\nabla=0}\cong (V_0\otimes_{K[\![t]\!]_0}K\{t\})^{\nabla=0}$. Obviously, we have $(V_0\otimes_{K[\![t]\!]_0}K\{t\})^{\nabla=0}=K(\mathbf{e}_0\otimes 1)$, which implies the assertion.

We will prove $(M\spcheck\otimes_{K[\![t]\!]_0}K\{t\})^{\nabla=0}=0$ by assuming Lemma~\ref{lem:power series}. We identify $M\spcheck$ as \\ $K[\![t]\!]_0\{T\}/K[\![t]\!]_0\{T\}(T^2-tT)$ via the isomorphism (\ref{eq:isom dual}). We claim that the $K$-vector space $(M\spcheck\otimes_{K[\![t]\!]_0}K[\![t]\!])^{\nabla=0}$ admits the basis $\{bT,-1+cT\}$, where
\[
b=\exp{(-t^2/2)},\ c=\sum_{i=0}^{\infty}((-1)^{i}/(2i+1)!!)t^{2i+1}.
\]
By Lemma~\ref{lem:power series}~(iii), $bT,-1+cT\in M\spcheck$ are linearly independent over $K$. If $g+hT\in M\spcheck\otimes_{K[\![t]\!]_0}K[\![t]\!]$ with $g,h\in K[\![t]\!]$ satisfies $D(g+hT)=0$, then we have $g'=0$ and $g+th+h'=0$. Hence, we have $g\in K$, and, by differentiating the second equation, we have $h''+th'+h=0$. Therefore, we have $h\in Kb+Kc$ by Lemma~\ref{lem:power series}~(iii). If we write $h=d_1b+d_2c$, then we have $g=-(th+h')=-d_2$ by Lemma~\ref{lem:power series}~(ii), which implies the claim. Let $v=g+hT\in (M\spcheck\otimes_{K[\![t]\!]_0}K\{t\})^{\nabla=0}$ with $g,h\in K\{t\}$. Then, $R(h)\ge 1$ by $h\in K[\![t]\!]_0$. By the above claim, we can write $v=d_1bT+d_2(-1+cT)$ for some $d_1,d_2\in K$. Then, we have $h=d_1 b+d_2 c$. If $h\neq 0$, then $R(h)=\omega^{1/2}<1$ by Lemma~\ref{lem:power series}~(iii), which is a contradiction. Hence, $h=0$. By Lemma~\ref{lem:power series}~(iii), $d_1=d_2=0$, which implies the assertion.

%%%%%%%%%%%%%
%%%%%%%%%%%%%
%%%%%%%%%%%%%
\subsection*{Some power series}\label{subsec:calculation}

\begin{lem}\label{lem:factorial}
For any integer $i\ge 1$, we have
\[
i/(p-1)-(1+\log_p{i})\le v_p(i!)\le i/(p-1).
\]
In other words, $\omega^i\le |i!|\le i\omega^{i-(p-1)}$.
\end{lem}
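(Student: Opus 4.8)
The plan is to reduce everything to Legendre's formula for the $p$-adic valuation of a factorial. Write $i$ in base $p$ as $i=\sum_{k=0}^{n}c_kp^k$ with $c_k\in\{0,\dots,p-1\}$, $c_n\neq 0$, so that $p^n\le i$ and hence the number of digits satisfies $n+1\le 1+\log_p i$. Legendre's formula (equivalently $v_p(i!)=\sum_{k\ge 1}\lfloor i/p^k\rfloor=(i-s_p(i))/(p-1)$, where $s_p(i)=\sum_k c_k$ is the digit sum) will be the only nontrivial input; it is entirely classical, so I would simply cite or recall it.

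From $v_p(i!)=(i-s_p(i))/(p-1)$ the upper bound is immediate: since $i\ge 1$ we have $s_p(i)\ge 1\ge 0$, whence $v_p(i!)\le i/(p-1)$. For the lower bound, each of the $n+1$ digits $c_k$ is at most $p-1$, so $s_p(i)\le (p-1)(n+1)\le (p-1)(1+\log_p i)$, and therefore
\[
v_p(i!)=\frac{i-s_p(i)}{p-1}\ge\frac{i}{p-1}-(1+\log_p i),
\]
which is the claimed two-sided inequality for $v_p(i!)$.

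Finally I would translate to absolute values using $|i!|=p^{-v_p(i!)}$ and $\omega=p^{-1/(p-1)}$. The upper estimate $v_p(i!)\le i/(p-1)$ gives $|i!|\ge p^{-i/(p-1)}=\omega^i$. The lower estimate gives $|i!|\le p^{-i/(p-1)+1+\log_p i}=\omega^i\cdot p\cdot i$, and since $\omega^{-(p-1)}=p^{(p-1)/(p-1)}=p$ this is exactly $|i!|\le i\,\omega^{i-(p-1)}$, as desired. There is no real obstacle here; the only point requiring a line of care is the digit-count bound $n+1\le 1+\log_p i$, which follows from $p^n\le i$.
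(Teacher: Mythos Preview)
Your argument is correct. Both your proof and the paper's rest on Legendre's formula, but you invoke it in the digit-sum form $v_p(i!)=(i-s_p(i))/(p-1)$ and then bound $s_p(i)$ between $0$ and $(p-1)(n+1)\le(p-1)(1+\log_p i)$, which dispatches both inequalities in one line each. The paper instead works directly with $v_p(i!)=\sum_{k\ge 1}\lfloor i/p^k\rfloor$: the upper bound is the same trivial estimate $\lfloor i/p^k\rfloor\le i/p^k$, while for the lower bound it fixes $r$ with $p^r\le i<p^{r+1}$ and carefully bounds the fractional parts $i/p^k-\lfloor i/p^k\rfloor$ for $k\le r$ together with the tail $\sum_{k>r}i/p^k$ to show $i/(p-1)-v_p(i!)\le r+1$. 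Your route is shorter and avoids that computation; the paper's route has the mild advantage of not assuming the digit-sum identity as a black box, but otherwise the two are equivalent in content.
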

\begin{proof}
We have $v_p(i!)=\lfloor i/p^k\rfloor\le\sum_{k=1}^{\infty}(i/p^k)=i/(p-1)$. We choose $r\in\mathbb{N}$ such that $p^r\le i<p^{r+1}$. It suffices to prove $i/(p-1)-v_p(i!)\le r+1$. We obtain the assertion by
\begin{align*}
&i/(p-1)-v_p(i!)=\sum_{k=1}^r\{(i/p^k)-\lfloor i/p^k\rfloor\}+\sum_{k=r+1}^{\infty}i/p^k\le \sum_{k=1}^r((p-1)+\dots+(p-1)p^{k-1})/p^k+\sum_{k=r+1}^{\infty}i/p^k\\
=&r-(p^r-1)/p^r(p-1)+i/p^r(p-1)\le r-(p^r-1)/p^r(p-1)+(p^{r+1}-1)/p^r(p-1)=r+1.
\end{align*}
\end{proof}

\begin{lem}\label{lem:overconvergent}
We consider the following formal sum with coefficients in $\mathbb{Z}_p$
\[
a=\sum_{i=0}^{\infty}(2i-1)!!t^{-(2i+1)}=-1/t+1/t^3+\dots.
\]
Then, for any $\alpha\in (\omega^{1/2},1)$, we have $a\in K\langle\alpha/t,t]\!]_0$. Moreover, we have
\[
a'+ta+1=0,\ a''+ta'+a=0,
\]
and, in $K\langle\alpha/t,t]\!]_0\{T\}$,
\[
a(T^2+tT+1)=(T+t)(aT-a').
\]
\end{lem}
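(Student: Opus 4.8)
The plan is to split the statement into three essentially independent claims: convergence of the formal sum $a$ on the relevant annulus, the two differential identities $a'+ta+1=0$ and $a''+ta'+a=0$, and the factorization identity in the twisted polynomial ring. The convergence claim is the one with real content; the differential identities are formal manipulations of the coefficients, and the twisted polynomial factorization follows from them by a direct expansion.

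First I would prove $a\in K\langle\alpha/t,t]\!]_0$ for $\alpha\in(\omega^{1/2},1)$. By the description of $K\langle\alpha/t,t]\!]_0$ in Definition~\ref{dfn:ring}, I need $\sup_i|(2i-1)!!|<\infty$ and $|(2i-1)!!|\,\alpha^{-(2i+1)}\to 0$ as $i\to\infty$. The first holds because $(2i-1)!!=(2i)!/(2^i i!)\in\mathbb{Z}_p$, so all coefficients have norm $\le 1$. For the decay, I would use the formula $(2i-1)!!=\binom{2i}{i}i!/2^i$ from the \emph{Notation} and Lemma~\ref{lem:factorial}: since $\binom{2i}{i}\in\mathbb{Z}$ and $|2^i|=1$, we get $|(2i-1)!!|\le|i!|\le i\,\omega^{i-(p-1)}$, so $|(2i-1)!!|\,\alpha^{-(2i+1)}\le i\,\omega^{-(p-1)}\alpha^{-1}(\omega/\alpha^2)^i$, which tends to $0$ precisely because $\alpha^2>\omega$. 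This is the step I expect to be the main obstacle, in the sense that it is the only place where the specific threshold $\omega^{1/2}$ enters and where one must be careful with the $p$-adic size of factorials; everything else is bookkeeping.

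Next I would verify the differential identities at the level of coefficients. Writing $a=\sum_{i\ge 0}(2i-1)!!\,t^{-(2i+1)}$, I compute $a'=\sum_{i\ge 0}-(2i+1)(2i-1)!!\,t^{-(2i+2)}$ and $ta=\sum_{i\ge 0}(2i-1)!!\,t^{-2i}$, and then check that $a'+ta+1$ has all coefficients zero: the coefficient of $t^{-2i}$ in $ta$ for $i\ge 1$ is $(2i-1)!!$, the coefficient of $t^{-2i}$ in $a'$ is $-(2i-1)(2i-3)!!=-(2i-1)!!$ (shifting the index), these cancel, the constant terms give $(-1)!!\cdot t^0$ from $ta$ at $i=0$... more carefully: $ta$ at $i=0$ contributes $1\cdot t^0$ wait $(2\cdot0-1)!!=(-1)!!=-1$ by the \emph{Notation}, giving $-1\cdot t^0$, which cancels the $+1$. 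So $a'+ta+1=0$. The identity $a''+ta'+a=0$ then follows by differentiating $a'+ta+1=0$, which gives $a''+a+ta'=0$ directly. (Alternatively one checks it termwise, but differentiation is cleaner.)

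Finally, the twisted polynomial identity: in $K\langle\alpha/t,t]\!]_0\{T\}$ one has $T\cdot f=f\cdot T+f'$ for $f$ in the ring, so I expand $(T+t)(aT-a')=TaT-Ta'+taT-ta' = aT^2+a'T-a'T-a''+taT-ta' = aT^2+taT-(a''+ta')$. Using $a''+ta'=-a$ from the second identity, this equals $aT^2+taT+a=a(T^2+tT+1)$, which is the claimed equality. Since $a$ is invertible in $K\langle\alpha/t,t]\!]_0$ (being $-1/t$ times a unit, as its lowest-degree term $-1/t$ dominates and the tail has strictly smaller Gauss norm at $\rho$ close to $1$), this also recovers the factorization $T^2+tT+1=a^{-1}(T+t)\,a\,(T-a'/a)$ used in the Decomposition subsection, but that refinement is not needed for the statement as written. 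I would close with a one-line remark that all three parts together give the lemma.
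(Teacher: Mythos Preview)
Your proposal is correct and follows essentially the same approach as the paper: the convergence estimate is identical (the same use of $(2i-1)!!=\binom{2i}{i}i!/2^i$ together with Lemma~\ref{lem:factorial} to bound $|(2i-1)!!|\alpha^{-(2i+1)}$ by a constant times $i(\omega/\alpha^2)^i$), and the remaining identities are exactly the ``direct calculation'' to which the paper appeals without giving details. Your aside about the invertibility of $a$ is unnecessary for the lemma (and the dominance claim is not obviously true at $\rho=1$), but as you note this plays no role in the statement.
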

\begin{proof}
Since we have
\[
|(2i-1)!!|\alpha^{-(2i+1)}=\left|\binom{2i}{i}i!/2^i\right|\alpha^{-(2i+1)}\le |i!|\alpha^{-(2i+1)}\le i(\omega/\alpha^2)^i\omega^{-(p-1)}\alpha^{-1}
\]
by Lemma~\ref{lem:factorial}, we have $|(2i-1)!!|\alpha^{-(2i+1)}\to 0$ as $i\to+\infty$. Hence, $a\in K\langle\alpha/t,t]\!]_0$. The rest of the assertion follows by a direct calculation.
\end{proof}

\begin{lem}\label{lem:power series}
We consider the following formal power series over $\mathbb{Q}_p$
\[
b=\exp{(-t^2/2)}=\sum_{i=0}^{\infty}((-1)^i/2^ii!)t^{2i}=1-t^2/2+\dots,
\]
\[
c=\sum_{i=0}^{\infty}((-1)^i/(2i+1)!!)t^{2i+1}=t-t^3/3+\dots.
\]
\begin{enumerate}
\item[(i)] We have $R(b)=R(c)=\omega^{1/2}$.
\item[(ii)] We have $b'+tb=0$ and $c'+tc=1$.
\item[(iii)] We consider the $K$-vector space
\[
W=\{h\in K[\![t]\!];h''+th'+h=0\}.
\]
Then, $b,c\in W$, and, $\{b,c\}$ forms a basis of $W$. Moreover, for any non-zero $h\in W$, we have $R(h)=\omega^{1/2}$.
\end{enumerate}
\end{lem}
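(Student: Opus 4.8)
The plan is to handle the three parts in order; parts (ii) and (iii) will largely reduce to formal manipulations once the coefficient estimate behind (i) is available. For (i), I would read the coefficients directly: since $p\neq 2$ we have $|2|=1$, so the coefficient of $t^{2i}$ in $b$ has absolute value $|i!|^{-1}$, and, using $(2i+1)!!=(2i+1)!/(2^ii!)$, the coefficient of $t^{2i+1}$ in $c$ has absolute value $|i!|/|(2i+1)!|$. Feeding the two-sided bound $\omega^n\le|n!|\le n\omega^{n-(p-1)}$ from Lemma~\ref{lem:factorial} into these expressions shows that the absolute value of the coefficient of $t^m$ (for $m$ of the relevant parity) equals $\omega^{-m/2}$ up to a factor bounded above by a polynomial in $m$ and below by the reciprocal of one. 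Hence $|a_m|\rho^m$ is a polynomial multiple of $(\rho^2/\omega)^{m/2}$, which tends to $0$ exactly when $\rho<\omega^{1/2}$ and fails to when $\rho>\omega^{1/2}$; therefore $R(b)=R(c)=\omega^{1/2}$.

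For (ii) I would differentiate the series termwise and reindex: $b'+tb=0$ is the coefficientwise identity $(-1)^i\,2i/(2^ii!)=-(-1)^{i-1}/(2^{i-1}(i-1)!)$ for the coefficient of $t^{2i-1}$, while $c'+tc=1$ follows from $(2i+1)/(2i+1)!!=1/(2i-1)!!$ for $i\ge 1$ together with the constant term $1$ contributed by the $t$-term of $c$. (One may also simply invoke $b=\exp(-t^2/2)$ formally.) For (iii), differentiating the relations of (ii) gives $b''+tb'+b=(b'+tb)'=0$ and $c''+tc'+c=(c'+tc)'=0$, so $b,c\in W$; they are linearly independent because $b$ is even with $b(0)=1$ while $c$ is odd with $c'(0)=1$. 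To see that $\{b,c\}$ spans $W$, observe that $h=\sum_i a_it^i\in W$ forces the recursion $(i+2)(i+1)a_{i+2}=-(i+1)a_i$, i.e.\ $a_{i+2}=-a_i/(i+2)$ for all $i\ge 0$, so $h$ is determined by $(a_0,a_1)$ and $\dim_KW=2$; as $b,c$ are independent elements of $W$, they form a basis.

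The final clause of (iii) is the only point with genuine content. Writing a nonzero $h\in W$ as $h=d_1b+d_2c$, the even part of $h$ equals $d_1b$ and the odd part equals $d_2c$, since $b$ is even and $c$ is odd; hence there is no cancellation across parities and $R(h)=\min\{R(d_1b),R(d_2c)\}$, a vanishing summand imposing no constraint. Each nonvanishing summand has radius $\omega^{1/2}$ by (i) together with the scale-invariance of the radius of convergence, so $R(h)=\omega^{1/2}$ for every $(d_1,d_2)\neq(0,0)$. The main obstacle, which is minor, is carrying the polynomial-in-$m$ error factors in (i) precisely enough to pin the radius exactly at $\omega^{1/2}$ rather than merely bounding it; the rest is routine termwise algebra.
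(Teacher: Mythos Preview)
Your proposal is correct and follows essentially the same approach as the paper: both use the two-sided factorial estimate of Lemma~\ref{lem:factorial} for (i), a direct termwise check for (ii), and for (iii) obtain $b,c\in W$ by differentiating (ii), get $\dim_K W=2$ from the two-step recursion on coefficients, and deduce the final radius statement from the fact that $b$ is even and $c$ is odd so the two summands in $h=d_1b+d_2c$ occupy disjoint parities. Your write-up is in fact more explicit than the paper's, which dispatches (ii) as ``a direct calculation'' and summarizes (iii) in a single sentence.
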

\begin{proof}
\begin{enumerate}
\item[(i)] Let $\rho\in (0,\omega^{1/2})$. By Lemma \ref{lem:factorial}, we have $i^{-1}\omega^{-i+(p-1)}\le |(-1)^i/2^ii!|\le \omega^{-i}$. Since for $\rho\in (0,\omega^{1/2})$, we have $\omega^{-i}\rho^{2i}\to 0$ as $i\to+\infty$, and, for $\rho\in (\omega^{1/2},\infty)$, we have $i^{-1}\omega^{-i+(p-1)}\rho^{2i}\to\infty$ as $i\to+\infty$, we obtain $R(b)=\omega^{1/2}$. We have $1/(2i+1)!!=2^ii!/(2i+1)!$, and,
\[
|1/(2i+1)!!|\le i\omega^{i-(p-1)}/\omega^{2i+1}=i\omega^{-i-p},
\]
\[
|1/(2i+1)!!|\ge \omega^i/((2i+1)\omega^{2i+1-(p-1)})=\omega^{-i+p-2}/(2i+1)
\]
by Lemma \ref{lem:factorial}. Similarly as in the case of $b$, we obtain $R(c)=\omega^{1/2}$.
\item[(ii)] It follows from a direct calculation.
\item[(iii)] By differentiating the equations in (ii), we have $b,c\in W$. The rest of the assertion follows by noting that $\dim_KW=2$, and, all coefficients of odd (resp. even) powers of $t$ in $b$ (resp. $c$) are $0$.
\end{enumerate}
\end{proof}

%%%%%%%%%%%%%
%%%%%%%%%%%%%
%%%%%%%%%%%%%
\section*{Acknowledgement}
The author thanks the referee for useful comments. This work is supported by JSPS KAKENHI Grant-in-Aid for Young Scientists (B) JP17K14161.
%%%%%%%%%%%%%
%%%%%%%%%%%%%
%%%%%%%%%%%%%

%%%%%%%%%%%%%%%
%%%%%%%%%%%%%%%
%%%%%%%%%%%%%%%
\end{document}